\mathchardef\mhyphen="2D 
\newtheorem{theorem}{Theorem}[section]
\newtheorem{lemma}[theorem]{Lemma}
\newtheorem{corollary}[theorem]{Corollary}
\theoremstyle{plain}
\DeclareMathOperator*{\argmin}{argmin}
\newcounter{manualsubequation}
\renewcommand{\themanualsubequation}{\alph{manualsubequation}}
\newcommand{\startsubequation}{%
  \setcounter{manualsubequation}{0}%
  \refstepcounter{equation}\ltx@label{manualsubeq\theequation}%
  \xdef\labelfor@subeq{manualsubeq\theequation}%
}
\newcommand{\tagsubequation}{%
  \stepcounter{manualsubequation}%
  \tag{\ref{\labelfor@subeq}\themanualsubequation}%
}
\let\subequationlabel\ltx@label
\patchcmd{\endalign}{\restorealignstate@}{\global\let\df@label\@empty\restorealignstate@}{}{}
\definecolor{rev}{RGB}{0,0,0}
\begin{document}

\title{Data-driven distributionally robust risk parity portfolio optimization}

\author{
\name{Giorgio Costa\textsuperscript{a}\thanks{Giorgio Costa. Email: gcosta@mie.utoronto.ca} and Roy H. Kwon\textsuperscript{a}\thanks{Roy H. Kwon. Email: rkwon@mie.utoronto.ca}}
\affil{\textsuperscript{a}Department of Mechanical and Industrial Engineering, University of Toronto, 5 King's College Road, Toronto, Ontario M5S 3G8, Canada}
}

\maketitle

\begin{abstract}
We propose a distributionally robust formulation of the traditional risk parity portfolio optimization problem. Distributional robustness is introduced by targeting the discrete probabilities attached to each observation used during parameter estimation. Instead of assuming that all observations are equally likely, we consider an ambiguity set that provides us with the flexibility to find the most adversarial probability distribution based on the investor's {\color{rev}desired degree of robustness}. This allows us to derive robust estimates to parametrize the distribution of asset returns without having to impose any particular structure on the data. The resulting distributionally robust optimization problem is a constrained convex--concave minimax problem. Our approach is financially meaningful and attempts to attain full risk diversification with respect to the worst-case instance of the portfolio risk measure. We propose a novel algorithmic approach to solve this minimax problem, which blends projected gradient ascent with sequential convex programming. By design, this algorithm is highly flexible and allows the user to choose among alternative statistical distance measures to define the ambiguity set. Moreover, the algorithm is highly tractable and scalable. Our numerical experiments suggest that a distributionally robust risk parity portfolio can yield a higher risk-adjusted rate of return when compared against the nominal portfolio.
\end{abstract}

\begin{keywords}
Portfolio selection; Risk parity; Distributionally robust optimization; Statistical ambiguity; Saddle-point problem; Gradient descent
\end{keywords}

\section{Introduction}\label{sec:intro}

Portfolio selection can be aptly presented as an optimal decision-making problem. Such problems have become prevalent in computational finance since the introduction of modern portfolio theory (MPT) by \citeauthor{markowitz1952portfolio} \cite{markowitz1952portfolio}. MPT posits that a portfolio's financial reward is quantified by its rate of return, while financial risk is quantified by the portfolio's variance. However, these two parameters are typically unknown to an investor and must be estimated from observable data, leading to estimation errors. In turn, these errors may have a profound impact on the portfolio's ex post financial performance. In the context of computational finance, the sensitivity of portfolio optimization to errors in estimated parameters has been widely explored in the literature \cite{best1991sensitivity, merton1980estimating, broadie1993computing}, leading to what is sometimes referred to as `error maximization' given the poor out-of-sample performance of these (ex ante) optimal portfolios.

Accounting for uncertainty during optimization has become paramount in any decision-making problem where parameters are non-deterministic. If we have knowledge of the underlying probability distribution that governs these parameters, then we can formulate this optimization problem as a stochastic program \cite{birge2011introduction, shapiro2014lectures}. On the other hand, when we have no distributional knowledge (or if we do not have confidence in our estimates) of the uncertain parameters, we can ignore any distributional estimates and instead solve the worst-case instance of the problem to ensure we retain feasibility in our solution. This is the basis of robust optimization, which frames the problem deterministically by taking the most extreme estimates of our uncertain parameters within some confidence level \cite{ben1998robust, bertsimas2004price, ben2009robust}. Some examples of robust optimization in the context of portfolio selection are presented in \cite{tutuncu2004robust, lobo2000worst, goldfarb2003robust, guastaroba2011investigating, fabozzi2007robust}.

This manuscript is based on a class of problems that sits somewhere in-between stochastic programming and robust optimization. Such problems attempt to use distributional information during optimization, but accept that the underlying probability distribution is unknown. Instead, the distribution is said to lie within an ambiguity set of probability distributions. Similar to robust optimization, a worst-case approach is taken, but with the distinction that we do this at the distributional level. Such a robust formulation for stochastic programs was proposed by \citeauthor{scarf1958min} \cite{scarf1958min}. Since then, this class of problems has often been referred to as \textit{minimax} problems or, more recently, as distributionally robust optimization (DRO) problems \cite{delage2010distributionally}. A detailed survey paper on DRO is presented in \cite{rahimian2019distributionally}.

The minimax problem has its roots in game theory \cite{neumann1928theorie}. In the context of this manuscript, we seek to minimize our cost function with respect to our decision variable, while the secondary player, i.e., `nature', is adversarial and seeks to maximize our cost with respect to our uncertain parameters. Thus, our true goal is to minimize our cost within the decision space against the most adversarial instance of the underlying distribution of the uncertain parameters. Minimax problems have been widely studied in literature in both theory and applications \cite{vzavckova1966minimax, dupavcova1987minimax, breton1995algorithms, shapiro2002minimax, shapiro2004class}. We note that minimax problems are sometimes referred to as saddle-point problems \cite{kim2008minimax, rustem2009algorithms} due to the `saddle' shape of the cost function when viewed in the higher-dimensional space created by the decision variable and the uncertain parameters. In particular, our manuscript focuses on the well-behaved subset of convex--concave minimax problems. 

The main objective of this manuscript is to introduce a distributionally robust portfolio selection problem. Specifically, we address a portfolio selection strategy known as \textit{risk parity}, which has gained popularity over the last decade among academics and practitioners. Risk parity seeks to construct a portfolio where the risk contribution of its constituent assets is equalized. In other words, each asset in the portfolio contributes the same level of risk towards the portfolio. Thus, by design, the risk parity problem is solely concerned with the portfolio risk measure, and does not necessitate the estimation of a reward measure. \citeauthor{maillard2008properties} \cite{maillard2008properties} carefully explain how to partition a portfolio's variance to find the risk contribution per asset. Directly optimizing the problem with respect to the asset risk contributions leads to a non-convex optimization problem \cite{bai2016least, costa2020generalized}, but some convex reformulations exist. For example, \citeauthor{mausser2014computing} \cite{mausser2014computing} cast the risk parity problem as a second order cone program (SOCP), while \citeauthor{bai2016least} \cite{bai2016least} casts it as  an unconstrained convex optimization problem. To address uncertainty in the estimated risk measure, \citeauthor{costa2020robust} \cite{costa2020robust} propose a robust risk parity framework built on the SOCP formulation, which takes the worst-case estimate of the risk measure but ignores any distributional information. For the purpose of this manuscript, we will use the convex risk parity problem from \cite{bai2016least}.

{\color{rev}It is possible to introduce distributional robustness into a portfolio selection problem by targeting the scenarios from which we derive the estimated parameters. When parameters are estimated from data, it is typically assumed that each scenario in the dataset is equally likely (i.e., we implicitly assume a uniform discrete probability distribution to describe the probability of each scenario). However, as shown in \cite{calafiore2007ambiguous, ben2013robust}, this assumption can be broken to allow the individual scenarios to have differing probabilities.} In turn, this discrete probability distribution can be modelled as a set of decision variables, allowing us to design a maximization problem to find the most adversarial discrete probability distribution such that we attain the worst-case instance of the estimated parameters. Given that only the portfolio risk measure is pertinent for risk parity, this manuscript focuses solely on the derivation of the asset covariance matrix from data. 

Addressing distributional robustness through a discrete probability distribution aligns naturally with a data-driven parameter estimation process. This assumes that market efficiency holds and that raw market data suffices to accurately represent the set of possible future returns. More importantly, this avoids making any assumptions about the underlying probability distribution of the asset returns, as well as avoiding assigning a structured process (such as a factor model) to model the returns. Thus, we are not required to impose a structure on the raw market data, which fully exempts us from the risk of model misspecification. This follows a similar rationale to another popular scenario-based portfolio risk measure known as historical value-at-risk, which assumes that raw market data suffices to represent the set of possible future outcomes. For the purpose of robustness in this manuscript, the application of a discrete probability distribution avoids the biases that could arise from assuming the returns have a specific structure, and provides us with the flexibility to derive a robust estimate of the asset covariance matrix implied by the raw market data themselves. 

The distributional ambiguity can be modelled as a convex set. This convex set is defined by constraints corresponding to the axioms of probability and, in particular, by a constraint that bounds the statistical distance between a nominal (i.e., assumed) probability distribution and its adversarial counterpart. The nominal distribution can be defined as any reasonable discrete probability distribution, but this amounts to a uniform distribution when we assume that all scenarios are equally likely. Thus, the adversarial distribution is allowed to deviate from the `equally likely' nominal distribution by a maximum permissible limit defined by our choice of statistical distance measure and our {\color{rev}desired degree of robustness}.

Given the conditions of our problem, we are limited to statistical distance measures for discrete probability distributions. The statistical distance measure used in \cite{calafiore2007ambiguous} was the Kullback--Leibler (KL) divergence. However, the KL divergence is not a proper distance metric,\footnote{A metric or `distance function' must be non-negative and satisfy the following axioms: symmetry, identity of indiscernibles, and the triangle inequality.} making it difficult for an investor to appropriately define this distance based on a {\color{rev}desired degree of robustness}. Thus, our manuscript focuses on statistical distance measures that satisfy the following two conditions: the measure must be a proper distance metric with finite bounds, and we must be able to formulate it as a computationally-tractable convex function. Therefore, the distributional ambiguity set is predominantly defined by our choice of distance measure and {\color{rev}degree of robustness}, which in turn defines our distributional robustness. Specifically, this manuscript discusses the following three statistical distance measures: the Jensen--Shannon (JS) divergence, Hellinger distance, and total variation (TV) distance. However, we note that our framework extends naturally to any finite statistical distance measure that can be represented as a convex function. 

{\color{rev}The nominal risk parity problem is modelled} as a convex minimization problem. In turn, the corresponding distributionally robust risk parity (DRRP) problem is a convex--concave minimax problem, where we seek to maximize our objective by finding the most adversarial instance of a discrete probability distribution. Our asset allocation variable is pragmatically constrained by the set of admissible portfolios, while the adversarial distribution is fundamentally constrained both by the axioms of probability and by the measure of statistical distance from the nominal distribution. Our modelling framework gives the user the flexibility to choose their preferred measure of statistical distance, provided this can be modelled as a convex function. {\color{rev}Thus, we have a} constrained minimax problem. 

{\color{rev}Such a problem can be solved by exploiting the dual of the adversarial maximization problem. In particular, \citeauthor{ben2013robust} \cite{ben2013robust} propose a tractable framework to reformulate minimax problems where statistical distance measures are part of the constraints. However, this framework is not able to handle variance-based risk measures due to the non-linearity in the adversarial probability variable. Nevertheless, as shown by \citeauthor{gotoh2018robust} \cite{gotoh2018robust}, a further transformation of the portfolio variance enables us to restate the DRRP problem as a straightforward convex minimization problem rather than a minimax problem. However, this reformulation brings about a secondary problem: numerical performance. Restating the DRRP problem as a convex minimization problem increases the complexity of the original problem, meaning that a non-linear optimization software package may be unable to solve the problem within reasonable time, in particular for large-scale problems.}

{\color{rev}Accordingly, our objective is not only to present the DRRP problem, but also to introduce a numerically efficient method to solve it.} A standard approach to solve constrained minimax problems {\color{rev}is to use} projection-type methods \cite{bertsekas1976goldstein, nedic2009subgradient, xiu2003some}. {\color{rev}Thus, our initial attempt to improve numerical performance results in a} projected gradient descent--ascent (PGDA) algorithm that alternates between the descent and ascent steps to reach the saddle point {\color{rev}(i.e., the optimal solution). However,} a standard PGDA algorithm requires that we take alternating descent and ascent steps as we move towards the saddle point of our problem. Such an approach typically requires double the number of design parameters when compared to algorithms that move in a single direction. Moreover, these design parameters must be defined by the user a priori (e.g., initial point, step sizes). Finally, iterating in two directions increases the possibility of numerical divergence. {\color{rev}Although the PGDA algorithm suffers from the aforementioned drawbacks, it serves to motivate a novel gradient-based algorithm designed to efficiently solve the DRRP minimax problem}.

{\color{rev}Our proposed algorithm} is grounded in projected gradient {\color{rev}ascent} and sequential convex programming, {\color{rev}and it works by exploiting} the existence of a unique optimal risk parity portfolio for any given {\color{rev}instance of the adversarial probability} distribution. Thus, our proposed algorithm operates iteratively through gradient ascent in the probability space while solving a risk parity minimization problem in the asset weight space after every iteration. We can interpret our proposed algorithm as an implementation of sequential convex programming (SCP), where we ascend in the probability space towards the most adversarial instance of the portfolio risk measure after every iteration using a projected gradient ascent (PGA) method. Thus, we refer to our proposed algorithm as SCP--PGA. Compared to the PGDA algorithm, each iteration of the SCP--PGA algorithm is computationally more expensive. However, the exactness of each step translates to significantly fewer iterations until convergence. Additionally, we will see that the structure of the problem, combined with modern optimization software packages, allows for a computationally tractable and scalable implementation. Exploiting the convex minimization step at each iteration simplifies the algorithmic development since we are only required to iteratively ascend in the probability space while maintaining the risk parity condition in the asset weight space.  

Numerical experiments show that our SCP--PGA algorithm is computationally efficient and scales well for problems with a large number of assets and scenarios. Moreover, the in-sample experiments show that the DRRP problem behaves as expected, while the out-of-sample experiments demonstrate good ex post performance. Specifically, the DRRP portfolio is able to attain a higher risk-adjusted rate of return when compared to the nominal risk parity portfolio. 

\subsection{Contribution}\label{sec:contribution}

This manuscript presents a DRRP portfolio optimization problem with a discrete probability ambiguity set on the portfolio risk measure. {\color{rev}Introducing distributional robustness through a discrete probability distribution} allows us to design a minimax risk parity problem. Specifically, this minimax problem seeks to equalize the asset risk contributions against the worst-case instance of the portfolio variance. 

{\color{rev}The contributions of this manuscript} are the following. First, we introduce the DRRP problem, which seeks risk parity with respect to the most adversarial estimate of the portfolio risk measure through a purely data-driven process (i.e, the probabilistic ambiguity is implied by the data themselves). Second, we explicitly define how to construct this ambiguity set using different statistical distance metrics and we show how to use an investor's {\color{rev}desired degree of robustness} to size the ambiguity set. {\color{rev}Third, we recast the original minimax problem as a convex minimization problem by exploiting the dual of the maximization step as shown in \cite{ben2013robust}. Although results in a convex minimization problem, it is highly non-linear and becomes numerically inefficient to solve as the problem increases in size. Therefore, our final contribution addresses this shortcoming by significantly enhancing numerical performance. To do so, we introduce} the SCP--PGA algorithm to iteratively solve the DRRP minimax problem. We note that the flexible structure of the SCP--PGA algorithm means that it may be applied to solve other portfolio selection problems, as well as other types of constrained convex--concave minimax problems from other disciplines. 

\subsection{Outline}\label{sec:outline}

The outline of this paper is the following. Section \ref{sec:prelim} introduces the preliminaries that serve as a foundation for the development of this paper. Our main contribution is presented in Section \ref{sec:DRORP}, where we propose the DRRP problem and {\color{rev}develop the SCP--PGA algorithm} to find the optimal risk parity portfolio. The corresponding numerical experiments are shown in Section \ref{sec:Exp}, which evaluate the proposed problem's computational tractability, as well as its in-sample and out-of-sample financial performance. Finally, Section \ref{sec:conclusion} summarizes the findings and contribution of this paper.

\subsection{Notation}\label{sec:notation}

We denote a real space of dimension \(n\) by \(\mathbb{R}^n\) and the corresponding non-negative orthant by \(\mathbb{R}_+^n\). Moreover, symmetric matrices of dimension \(n\) with real-valued elements are denoted by \(\mathbb{S}^n\), while the subset of positive semi-definite (PSD) matrices are denoted by \(\mathbb{S}_+^n\). If we need to reference some specific element \(i\) within a vector \(\bm{z}\), we denote this as \(z_i\). If we define a vector as the product between a matrix and a vector, \(\bm{A}\bm{z}\in\mathbb{R}^m\), then we reference its \(i^{\text{th}}\) element as \([\bm{A}\bm{z}]_i\). The \(\ell_p\)-norm of an arbitrary vector \(\bm{z}\in\mathbb{R}^n\) is denoted by \(\|\cdot\|_p\), where \(\|\bm{z}\|_p \triangleq (\sum_{i=1}^n |z_i|^p)^{1/p}\). {\color{rev}Finally, theorems that are well-known and established in literature are identified by the theorem's name in brackets and are presented without proof.}

\section{Preliminaries}\label{sec:prelim}

\subsection{Estimation of parameters}\label{sec:param}

We begin by discussing the measures of financial reward and risk that will be used in this manuscript. As defined in MPT \cite{markowitz1952portfolio}, the reward is measured by the portfolio rate of return (or simply the `return'). The portfolio return is a weighted linear combination of the returns of the \(n\) assets that constitute the portfolio. The asset returns are random variables which we define as the vector \(\bm{\xi}\in\mathbb{R}^n\). These random returns are governed by some probability distribution with first and second moments defined as the expected returns \(\bm{\mu}\in\mathbb{R}^n\) and covariance matrix \(\bm{\Sigma}\in\mathbb{S}_+^n\), respectively. It follows that, at the asset-level, the financial reward is measured by \(\bm{\mu}\) and the financial risk by \(\bm{\Sigma}\). In particular, the true moments are assumed to be latent and are typically estimated from data, meaning they are prone to suffer from estimation error \cite{chopra1993, best1991sensitivity, merton1980estimating}. 

We define a portfolio as a vector of asset weights \(\bm{x}\in\mathbb{R}^n\), where \(x_i\) represents the proportion of wealth invested in asset \(i\). From an asset management perspective, \(\bm{x}\) is our vector of decision variables that represents our asset allocation strategy. Thus, at the portfolio-level, the portfolio random return is defined as \(\pi(\bm{x}) \triangleq \bm{\xi}^\top \bm{x}\). The corresponding measures of financial reward and risk are
\begin{align}
	\mu_\pi(\bm{x}) &\triangleq \bm{\mu}^\top \bm{x}, \label{eq:PortRet}\\
    \sigma_\pi^2(\bm{x}) &\triangleq \bm{x}^\top \bm{\Sigma} \bm{x}, \label{eq:PortVar}
\end{align}
where the portfolio expected return is \(\mu_\pi\in\mathbb{R}\) while the portfolio variance is \(\sigma_\pi^2\in\mathbb{R}_+\). The portfolio \(\bm{x}\) is generally constrained by the set of admissible portfolios \(\mathcal{X}\), which, in our case, disallows short sales and imposes a unit budget constraint. Short sales are prohibited due to a fundamental limitation of risk parity, which we discuss in greater detail in Section \ref{sec:RP}. It follows that the set of admissible portfolios is the following simplex
\begin{equation}
	\mathcal{X} \triangleq \big\{ \bm{x}\in\mathbb{R}^n_+ : \bm{1}^\top \bm{x} = 1\big\}.
\label{eq:set}
\end{equation}
Restricting \(\bm{x}\) to the non-negative orthant of the real \(n\)-dimensional space serves to disallow short sales. The equality constraint in \(\mathcal{X}\) is necessary to ensure that the entirety of our available budget is invested in the assets.

The first two moments of the joint probability distribution of asset returns, \(\bm{\mu}\) and \(\bm{\Sigma}\), are typically estimated from data (e.g., historical scenarios of asset returns). Assume our asset return dataset \(\hat{\bm{\xi}} \in\mathbb{R}^{n\times T}\)   consists of \(T\) discrete scenarios for \(n\) assets (i.e., we have \(\hat{\bm{\xi}}\) scenarios and these scenarios suffice to appropriately represent the possible outcomes of the random variable \(\bm{\xi}\)). In a similar fashion to \cite{calafiore2007ambiguous}, we assume there exists some probability \(p_t\) associated with each scenario \(t\). In vector notation, this is the probability mass function \(\bm{p}\in\mathcal{P}\), where
\begin{equation}
	\mathcal{P} \triangleq \big\{\bm{p} \in\mathbb{R}_+^T : \bm{1}^\top \bm{p} = 1\big\}
\label{eq:axioms}
\end{equation}
is the simplex defined by the axioms of probability. 

If we have knowledge of \(\bm{p}\), then we can statistically estimate \(\bm{\mu}\) and \(\bm{\Sigma}\). Let \(\hat{\bm{\xi}}_t\in\mathbb{R}^n\) be the \(t^{\text{th}}\) scenario of the dataset \(\hat{\bm{\xi}}\). The first two moments are
\begin{align}
    \hat{\bm{\mu}}(\bm{p}) &\triangleq \mathbb{E}[\bm{\xi}] = \sum_{t=1}^T p_t \cdot \hat{\bm{\xi}}_t,\label{eq:mu}\\
    \hat{\bm{\Sigma}}(\bm{p}) &\triangleq \mathbb{E}\big[\big(\bm{\xi} - \hat{\bm{\mu}}(\bm{p})\big)^2\big] = \sum_{t=1}^T p_t\cdot \big(\hat{\bm{\xi}}_t - \hat{\bm{\mu}}(\bm{p})\big)\big(\hat{\bm{\xi}}_t - \hat{\bm{\mu}}(\bm{p})\big)^\top,\label{eq:Sigma}
\end{align}  
where \(\hat{\bm{\mu}} \in\mathbb{R}^n\) and \(\hat{\bm{\Sigma}}\in\mathbb{S}_+^n\) are the data-driven estimates of the latent parameters \(\bm{\mu}\) and \(\bm{\Sigma}\), respectively. Our estimates are shown as functions of some discrete probability distribution \(\bm{p}\). If we assume each scenario is equally likely, then \eqref{eq:mu} and \eqref{eq:Sigma} are simply the standard sample arithmetic mean and sample covariance matrix\footnote{We note that \(\hat{\bm{\Sigma}}(\bm{q})\), where \(q_t=1/T\) for \(t=1,\dots,T\), yields the standard scenario-based estimate of the covariance matrix. To recover the \textit{unbiased} estimate of the covariance matrix, we should multiply \(\hat{\bm{\Sigma}}(\bm{q})\) by \(T/(T-1)\). However, this distinction has no effect for the purpose of this paper.} typically derived from data. Finally, we note that \(\hat{\bm{\Sigma}}(\bm{p})\) in \eqref{eq:Sigma} is the result of the weighted sum of \(T\) rank-1 symmetric matrices, which means \(\hat{\bm{\Sigma}}(\bm{p})\) is guaranteed to be a PSD matrix. 

Estimating \(\bm{\mu}\) and \(\bm{\Sigma}\) in this fashion means we are not required to impose any particular structure to model the latent asset returns distribution, avoiding the any biases and errors arising from model misspecification. Our only assumption is that market efficiency holds, meaning we can derive adequate estimates of \(\bm{\mu}\) and \(\bm{\Sigma}\) directly from a given raw market dataset \(\hat{\bm{\xi}}\).

The estimated portfolio expected return and variance follow the same logic as \eqref{eq:PortRet} and \eqref{eq:PortVar}, except we replace the latent parameters \(\bm{\mu}\) and \(\bm{\Sigma}\) with the estimates \(\hat{\bm{\mu}}(\bm{p})\) and \(\hat{\bm{\Sigma}}(\bm{p})\) from \eqref{eq:mu} and \eqref{eq:Sigma}. We break down the derivation as follows. Assume we have a portfolio \(\bm{x}\). For a given dataset \(\hat{\bm{\xi}}\), the corresponding vector of portfolio return scenarios is \(\hat{\bm{\pi}}(\bm{x}) \triangleq \hat{\bm{\xi}}^\top \bm{x}\). Thus, the estimated portfolio expected return \(\hat{\mu}_\pi\in\mathbb{R}\) and variance \(\hat{\sigma}_\pi^2\in\mathbb{R}_+\) are
\begin{align}
	\startsubequation\subequationlabel{eq:PortRetEst}\tagsubequation\label{eq:PortRetEst1}
    \hat{\mu}_\pi(\bm{x},\bm{p}) &\triangleq \bm{x}^\top \hat{\bm{\mu}}(\bm{p})\\
    							&= \bm{p}^\top \hat{\bm{\pi}}(\bm{x}),\tagsubequation\label{eq:PortRetEst2}\\[1.5ex]
    \startsubequation\subequationlabel{eq:PortVarEst}\tagsubequation\label{eq:PortVarEst1}
    \hat{\sigma}_\pi^2(\bm{x},\bm{p}) &\triangleq \bm{x}^\top \hat{\bm{\Sigma}}(\bm{p}) \bm{x}\\
    								&= \mathbb{E}\Big[ \big(\pi(\bm{x}) - \mathbb{E}[\pi(\bm{x})]\big)^2 \Big] = \mathbb{E}[\pi^2(\bm{x})] - \big(\mathbb{E}[\pi(\bm{x})]\big)^2\nonumber\\ 
    \tagsubequation\label{eq:PortVarEst2}
    &= \bm{p}^\top \hat{\bm{\pi}}^2(\bm{x}) - \bm{p}^\top \hat{\bm{\Theta}}(\bm{x}) \bm{p},
\end{align}
where \(\hat{\bm{\pi}}^2(\bm{x})\in\mathbb{R}^T_+\) denotes the element-wise square of the vector of portfolio return scenarios, and \(\hat{\bm{\Theta}}(\bm{x}) \triangleq \hat{\bm{\pi}}(\bm{x}) \hat{\bm{\pi}}(\bm{x})^\top\). By definition, we have that \(\hat{\bm{\Theta}}(\bm{y})\in\mathbb{S}_+^T\) for any vector \(\bm{y}\in\mathbb{R}^n\), meaning the portfolio variance in \eqref{eq:PortVarEst2} is concave over \(\bm{p}\in\mathcal{P}\). Moreover, since \(\hat{\bm{\Sigma}}(\bm{p})\in\mathbb{S}_+^n\) for any probability distribution \(\bm{p}\in\mathcal{P}\), the portfolio variance in \eqref{eq:PortVarEst1} is convex over \(\bm{x}\in\mathcal{X}\). As we will see in Section \ref{sec:DRORP}, the convexity over \(\bm{x}\in\mathcal{X}\) and concavity over \(\bm{p}\in\mathcal{P}\) of the portfolio variance \(\hat{\sigma}_\pi^2(\bm{x},\bm{p})\) will allow us to formulate a convex--concave minimax problem. 

\subsection{Risk parity}\label{sec:RP}

Risk parity is a modern asset allocation strategy that aims to construct a portfolio where every asset contributes the same amount of risk. Thus, risk parity is fully diversified from a risk perspective. In turn, the risk parity problem is only concerned with financial risk, and does not require a measure of financial reward during optimization.

{\color{rev} The portfolio standard deviation, often referred to as the portfolio `volatility,' is the square root of the portfolio variance. Assume we have perfect knowledge of the distribution of the asset random returns (i.e., assume we have knowledge of the true covariance matrix \(\bm{\Sigma}\)). Then, the portfolio standard deviation can be found by taking the square root of \eqref{eq:PortVar}, i.e., \(\sigma_\pi = \sqrt{\bm{x}^\top \bm{\Sigma} \bm{x}}\). By definition, the portfolio standard deviation is a homogeneous function of degree one, which allows us to partition the standard deviation into its asset-level components by applying Theorem \ref{thm:Euler}.

\begin{theorem}[Euler's homogeneous function theorem]
\label{thm:Euler}
A positive homogeneous function of degree \(k\) is a function where \(f(c\cdot \bm{z}) = c^k\cdot f(\bm{z})\ \forall\ c\in\mathbb{R}_+\).  Let \(f:\mathbb{R}^n \rightarrow \mathbb{R}\) be a continuous and differentiable homogeneous function of degree one. Then,
\[
	k\cdot f(\bm{z}) = \bm{z}^\top \nabla f(\bm{z}).
\]
\end{theorem}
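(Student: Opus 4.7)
The plan is to prove this by differentiating the defining homogeneity identity $f(c\bm{z}) = c^k f(\bm{z})$ with respect to the scalar $c$ and then evaluating the resulting equality at $c=1$. Concretely, I would fix an arbitrary $\bm{z}\in\mathbb{R}^n$ and introduce the auxiliary single-variable function $g(c) \triangleq f(c\bm{z})$ defined for $c\in\mathbb{R}_+$. The whole proof reduces to computing $g'(c)$ in two different ways and matching them.

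First, by the positive homogeneity hypothesis, $g(c) = c^k f(\bm{z})$, which is a simple power function in $c$ with derivative $g'(c) = k\, c^{k-1} f(\bm{z})$. Second, because $f$ is differentiable and the inner map $c\mapsto c\bm{z}$ is linear in $c$ with Jacobian $\bm{z}$, the chain rule gives the alternative expression $g'(c) = \nabla f(c\bm{z})^\top \bm{z}$. Equating the two expressions yields $k\, c^{k-1} f(\bm{z}) = \nabla f(c\bm{z})^\top \bm{z}$ for every $c\in\mathbb{R}_+$, and specializing to $c=1$ produces the claimed identity $k\cdot f(\bm{z}) = \bm{z}^\top \nabla f(\bm{z})$. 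Since $\bm{z}$ was arbitrary, the result holds on all of $\mathbb{R}^n$.

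There is no genuinely hard step here; the argument is essentially a one-line differentiation of the homogeneity relation. The only subtlety worth flagging is that we are tacitly using the fact that the homogeneity identity holds on an open neighborhood of $c=1$ rather than at a single point, which is exactly what the quantifier $\forall\, c\in\mathbb{R}_+$ in the definition supplies, making the derivative at $c=1$ well-defined. For the downstream application to risk parity, the relevant case is $k=1$ applied to $f(\bm{x}) = \sqrt{\bm{x}^\top \bm{\Sigma}\bm{x}}$, so one could alternatively give a direct proof for that specific $f$, but the chain-rule argument above is both shorter and exposes why the degree $k$ appears as a multiplier on the left-hand side.
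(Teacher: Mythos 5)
Your proof is correct and is the standard argument: differentiate $g(c)=f(c\bm{z})=c^k f(\bm{z})$ in $c$ via the chain rule and set $c=1$. The paper itself supplies no proof to compare against --- its notation section states that named, well-known theorems are presented without proof --- so there is nothing to diverge from; note only that the paper's hypothesis says ``degree one'' while its conclusion carries a general $k$ (a typo in the statement), and your argument correctly proves the general-degree-$k$ version that the conclusion actually asserts.
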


The portfolio standard deviation, \(\sigma_\pi = \sqrt{\bm{x}^\top \bm{\Sigma} \bm{x}}\), is a positive homogeneous function of degree one. Then, as shown in \cite{maillard2008properties},  we can apply Theorem \ref{thm:Euler} to partition it into a sum of asset-level components as follows,}
\begin{equation}
    \sigma_\pi = \sqrt{\bm{x}^\top \bm{\Sigma} \bm{x}} = \sum_{i=1}^{n} x_i \frac{\partial \sigma_\pi}{\partial x_i} = \sum_{i=1}^{n} x_i \frac{[\bm{\Sigma} \bm{x}]_i}{\sqrt{\bm{x}^\top \bm{\Sigma} \bm{x}}}.
\label{eq:riskDecomp}
\end{equation}
The latter part of \eqref{eq:riskDecomp} shows the partitions of the portfolio standard deviation for each asset \(i = 1, \dots, n\). Note that the denominator in this expression is consistent for all partitions, and it is equal to the portfolio standard deviation. As shown in \cite{costa2020generalized}, we can rearrange this expression such that we partition the portfolio variance instead. Thus, we can express the portfolio variance as the sum of \(n\) parts,\footnote{{\color{rev}Since the portfolio variance is a positively homogeneous function of degree two, we can also reach the same conclusion by decomposing the portfolio variance directly using Theorem \ref{thm:Euler}.}} 
\begin{equation}
    \sigma_\pi^2 = \bm{x}^\top \bm{\Sigma} \bm{x} = \sum_{i=1}^{n} x_i [\bm{\Sigma} \bm{x}]_i = \sum_{i=1}^{n} R_i,
\label{eq:varDecomp}
\end{equation}
where \(R_i \triangleq x_i [\bm{\Sigma} \bm{x}]_i\) is the individual risk contribution of asset \(i\). Now that we are able to measure the individual risk contributions, we can formulate an optimization problem to construct {\color{rev}a portfolio that satisfies the risk parity condition,} \(R_i = R_j\ \forall\ i,j\).

As prescribed by \citeauthor{bai2016least} \cite{bai2016least}, we can design an unconstrained convex optimization problem that, at optimality, attains the desired risk parity condition. The problem is the following
\begin{equation}
	\min_{\bm{y}\in\mathbb{R}_+^n}\quad \frac{1}{2}\bm{y}^\top\bm{\Sigma} \bm{y} - \kappa \sum_{i=1}^n \ln(y_i),
\label{eq:NomRP}
\end{equation}
where \(\kappa > 0\) is some arbitrary constant\footnote{In theory, we can assign any positive value to \(\kappa\) without loss of generality. In practice, we should avoid assigning extremely large or small values to \(\kappa\) to avoid numerical instability.} and the auxiliary variable \(\bm{y}\in\mathbb{R}^n_+\) serves as a placeholder for our asset weights. The auxiliary variable \(\bm{y}\) will most likely violate the set of admissible portfolios \(\mathcal{X}\) given that we do not impose a budget equality constraint. 

{\color{rev}The first term in the objective function of \eqref{eq:NomRP}, \(\bm{y}^\top\bm{\Sigma} \bm{y}\), is akin to the portfolio variance with respect to the placeholder variable \(\bm{y}\), which may imply that \eqref{eq:NomRP} seeks to minimize the portfolio variance. However, this is not the case due to the logarithmic barrier term. In fact, this objective function is designed specifically to attain the \textit{risk parity condition} at optimality. A solution \(\bm{y}^{\text{RP}}\in\mathbb{R}_+^n\) satisfies the the risk parity condition when all the variance-based asset risk contributions are equalized, i.e., \(R_i = R_j\ \forall\ i,j\). 

\begin{lemma}
\label{lemma:RP}
Let \(\bm{\Sigma}\in\mathbb{S}_+^n\) and \(\kappa > 0\). The optimal solution of the optimization problem in \eqref{eq:NomRP} is unique and satisfies the risk parity condition, i.e., \(\bm{y}^{\text{RP}} = \argmin_{\bm{y}\in\mathbb{R}_+^n} \frac{1}{2}\bm{y}^\top\bm{\Sigma} \bm{y} - \kappa \sum_{i=1}^n \ln(y_i)\).
\end{lemma}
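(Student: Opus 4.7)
The plan is to treat \eqref{eq:NomRP} as an unconstrained convex program over the open domain $\mathbb{R}_{++}^n$ (the logarithmic barrier automatically excludes the boundary), and to argue: (i) strict convexity plus coercivity give existence and uniqueness of the minimizer; (ii) the first-order stationarity condition directly reads off the risk parity identity with a common risk contribution equal to $\kappa$.

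First I would compute the Hessian of the objective, $\nabla^2 f(\bm{y}) = \bm{\Sigma} + \kappa\,\mathrm{diag}(y_1^{-2},\dots,y_n^{-2})$. Since $\bm{\Sigma}\in\mathbb{S}_+^n$ and $\kappa>0$, this Hessian is positive definite on $\mathbb{R}_{++}^n$ regardless of whether $\bm{\Sigma}$ is singular, so $f$ is strictly convex on its effective domain and any minimizer is unique. For existence, I would invoke coercivity: as any coordinate $y_i\to 0^+$ the barrier term $-\kappa\ln(y_i)\to+\infty$, and as $\|\bm{y}\|\to\infty$ the quadratic term dominates the logarithmic growth of the barrier, so $f(\bm{y})\to+\infty$ in every direction in which $\bm{y}$ can escape to infinity within $\mathbb{R}_{++}^n$. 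Combined with continuity of $f$ on $\mathbb{R}_{++}^n$, this yields a unique minimizer $\bm{y}^{\text{RP}}\in\mathbb{R}_{++}^n$.

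Because $\bm{y}^{\text{RP}}$ lies in the interior of $\mathbb{R}_+^n$, it satisfies $\nabla f(\bm{y}^{\text{RP}})=\bm{0}$. Writing this out coordinate-wise gives
\[
[\bm{\Sigma}\bm{y}^{\text{RP}}]_i - \frac{\kappa}{y_i^{\text{RP}}} = 0, \qquad i=1,\dots,n.
\]
Multiplying through by $y_i^{\text{RP}}>0$ yields $y_i^{\text{RP}}[\bm{\Sigma}\bm{y}^{\text{RP}}]_i = \kappa$ for every $i$. Recalling from \eqref{eq:varDecomp} that $R_i = y_i[\bm{\Sigma}\bm{y}]_i$ is the $i$-th variance-based risk contribution, this says $R_i^{\text{RP}}=\kappa$ uniformly in $i$, so all asset risk contributions are equalized and $\bm{y}^{\text{RP}}$ satisfies the risk parity condition.

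The only delicate point I anticipate is coercivity when $\bm{\Sigma}$ is merely PSD: if a strictly positive vector $\bm{v}\in\mathbb{R}_{++}^n$ lies in the null space of $\bm{\Sigma}$, then along $\bm{y}=t\bm{v}$ the quadratic term vanishes while $-\kappa\sum_i\ln(tv_i) = -\kappa n\ln t - \kappa\sum_i\ln v_i \to -\infty$, and the infimum is not attained. This is the standard reason the risk parity literature works under $\bm{\Sigma}\succ 0$, which holds almost surely for the data-driven $\hat{\bm{\Sigma}}(\bm{p})$ used here whenever $T\geq n$ and the scenarios are in general position; under that mild nondegeneracy the coercivity argument goes through cleanly, and the rest of the proof is a direct application of first-order optimality.
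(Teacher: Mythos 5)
Your proof takes essentially the same route as the paper's: strict convexity of the objective (quadratic plus strictly convex barrier) followed by the first-order condition $y_i[\bm{\Sigma}\bm{y}]_i=\kappa$ for all $i$. Your version is in fact more complete, since the paper simply asserts that the unique minimizer ``is attained'' from strict convexity alone, whereas you supply the coercivity argument needed for existence and correctly observe that for merely positive semi-definite $\bm{\Sigma}$ with a strictly positive null vector (e.g.\ $\bm{\Sigma}=\bm{0}$) the infimum is not attained, so the lemma as literally stated implicitly requires the nondegeneracy you describe.
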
 

\begin{proof}
Let
\(
	f(\bm{y}) = \frac{1}{2}\bm{y}^\top\bm{\Sigma} \bm{y} - \kappa \sum_{i=1}^n \ln(y_i).
\)
Since \(\bm{\Sigma}\in\mathbb{S}_+^n\) and given that \(\kappa \sum_{i=1}^{n} \ln{(y_i)}\)  for \(\kappa>0\) is a strictly concave function, then \(f\) is a strictly convex function. By design, the logarithmic barrier term naturally restricts the inputs to the non-negative orthant of the real \(n\)-dimensional space, i.e., \(\bm{y}\in\mathbb{R}_+^n\). 

Since \(f\) is strictly convex function, its unique minimizer \(\bm{y}^{\text{RP}}\) is attained by satisfying the first-order condition, \(\nabla f(\bm{y}^{\text{RP}}) = \bm{0}\). The gradient of \(f\) is 
\[
    \nabla f(\bm{y}) = \bm{\Sigma} \bm{y} - \kappa \bm{y}^{-1},
\]
	where \(\bm{y}^{-1} = [1/y_1, 1/y_2, ..., 1/y_n]^\top\). Since \(\nabla f(\bm{y}^{\text{RP}}) = \bm{0}\), we must have that 
\[
\begin{aligned}
	\ [\bm{\Sigma} \bm{y}^{\text{RP}}]_i &= \frac{\kappa}{y_i^{\text{RP}}} && \forall\ i, \\[1ex]
    y_i^{\text{RP}} [\bm{\Sigma} \bm{y}^{\text{RP}}]_i &= \kappa && \forall\ i, \\[1ex]
    y_i^{\text{RP}} [\bm{\Sigma} \bm{y}^{\text{RP}}]_i &= y_j^{\text{RP}} [\bm{\Sigma} \bm{y}^{\text{RP}}]_j && \forall\ i,\ j,
\end{aligned}
\]
which shows \(\bm{y}^{\text{RP}}\) satisfies the risk parity condition for any \(\kappa>0\).
\end{proof}
}

Since the risk parity problem in \eqref{eq:NomRP} does not impose {\color{rev}a budget equality} constraint, we cannot claim its optimal solution \(\bm{y}^{\text{RP}}\) is an admissible portfolio. {\color{rev}Nevertheless, we can use \(\bm{y}^{\text{RP}}\) to recover the unique optimal risk parity portfolio \(\bm{x}^{\text{RP}}\). For any \(\bm{y}\in\mathbb{R}_+^n\), the projection onto the set of admissible portfolios \(\mathcal{X}\) is
\[
	\text{\large $\Pi$}_{\mathcal{X}}(\bm{y}) \triangleq \frac{\bm{y}}{\sum_{i=1}^n y_i}.
\]

\begin{theorem}
\label{thm:RP}
	For a given \(\bm{\Sigma}\in\mathbb{S}_+^n\), there exists a unique optimal risk parity portfolio \(\bm{x}^{\text{RP}}\in\mathcal{X}\), independent of the choice of \(\kappa>0\), which can be recovered by projecting the solution to the risk parity problem in \eqref{eq:NomRP} onto \(\mathcal{X}\), i.e.,  \(\bm{x}^{\text{RP}} \triangleq \text{\large $\Pi$}_{\mathcal{X}}\big(\bm{y}^{\text{RP}}\big)\).
\end{theorem}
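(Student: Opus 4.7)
The plan is to exploit Lemma~\ref{lemma:RP} together with the scale-invariance of the risk parity condition. The key observation is that if $\bm{y}\in\mathbb{R}_+^n$ satisfies $y_i[\bm{\Sigma}\bm{y}]_i = y_j[\bm{\Sigma}\bm{y}]_j$ for all $i,j$, then so does $c\bm{y}$ for every $c>0$, since $(cy_i)[\bm{\Sigma}(c\bm{y})]_i = c^2\, y_i[\bm{\Sigma}\bm{y}]_i$. Moreover, the projection $\Pi_{\mathcal{X}}(\bm{y}) = \bm{y}/\sum_i y_i$ is homogeneous of degree zero, so positive scalar multiples of $\bm{y}$ collapse to the same admissible portfolio. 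The proof therefore reduces to (a) showing that varying $\kappa>0$ merely rescales $\bm{y}^{\text{RP}}(\kappa)$, which will imply $\kappa$-independence after projection, and (b) showing that any admissible portfolio satisfying the risk parity condition coincides with $\Pi_{\mathcal{X}}(\bm{y}^{\text{RP}}(\kappa))$, which will yield uniqueness.

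For step (a), I would invoke the first-order condition established in the proof of Lemma~\ref{lemma:RP}, namely $y_i^{\text{RP}}(\kappa)\,[\bm{\Sigma}\bm{y}^{\text{RP}}(\kappa)]_i = \kappa$ for all $i$. A direct substitution shows that $\bm{z} \triangleq \sqrt{\kappa'/\kappa}\,\bm{y}^{\text{RP}}(\kappa)$ satisfies $z_i[\bm{\Sigma}\bm{z}]_i = \kappa'$ component-wise, and by the uniqueness part of Lemma~\ref{lemma:RP} we conclude $\bm{y}^{\text{RP}}(\kappa') = \sqrt{\kappa'/\kappa}\,\bm{y}^{\text{RP}}(\kappa)$. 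Applying $\Pi_{\mathcal{X}}$ cancels the $\sqrt{\kappa'/\kappa}$ factor, so $\Pi_{\mathcal{X}}(\bm{y}^{\text{RP}}(\kappa))$ does not depend on the choice of $\kappa>0$.

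For step (b), suppose $\bm{x}\in\mathcal{X}$ is any admissible portfolio satisfying $x_i[\bm{\Sigma}\bm{x}]_i = x_j[\bm{\Sigma}\bm{x}]_j$ for all $i,j$, and denote the common value by $\tilde{\kappa}$. Summing over $i$ yields $n\tilde{\kappa} = \bm{x}^\top\bm{\Sigma}\bm{x}$, which is strictly positive whenever $\bm{x}$ does not lie in the null space of $\bm{\Sigma}$. In that regime, $\bm{x}$ satisfies the first-order optimality condition for \eqref{eq:NomRP} with $\kappa = \tilde{\kappa}>0$, so strict convexity of the objective (Lemma~\ref{lemma:RP}) forces $\bm{x} = \bm{y}^{\text{RP}}(\tilde{\kappa})$. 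Projecting both sides and invoking step (a) yields $\bm{x} = \Pi_{\mathcal{X}}(\bm{y}^{\text{RP}}(\tilde{\kappa})) = \Pi_{\mathcal{X}}(\bm{y}^{\text{RP}}(\kappa))$ for every $\kappa>0$, establishing uniqueness of $\bm{x}^{\text{RP}}$.

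The main subtlety is the degenerate case in step (b) where $\bm{\Sigma}\bm{x}$ might have a zero entry (possible since $\bm{\Sigma}$ is only assumed PSD, not PD), which would make $\tilde{\kappa}=0$ and invalidate the reduction to the first-order condition. This pathology is ruled out by construction: the logarithmic barrier in \eqref{eq:NomRP} forces every entry of $\bm{y}^{\text{RP}}$ to be strictly positive at the optimum, and the first-order relation $y_i^{\text{RP}}[\bm{\Sigma}\bm{y}^{\text{RP}}]_i=\kappa>0$ then forces $\bm{\Sigma}\bm{y}^{\text{RP}}$ to be strictly positive component-wise as well. The projected portfolio $\bm{x}^{\text{RP}}$ inherits this strict positivity, so the uniqueness argument applies within the class of portfolios that are compatible with the barrier-induced non-degeneracy of \eqref{eq:NomRP}.
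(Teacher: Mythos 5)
Your proof is correct and follows essentially the same route as the paper's: both rest on the first-order condition $y_i^{\text{RP}}[\bm{\Sigma}\bm{y}^{\text{RP}}]_i = \kappa$ from Lemma~\ref{lemma:RP} and the fact that changing $\kappa$ merely rescales the minimizer, a rescaling that the degree-zero projection $\Pi_{\mathcal{X}}$ absorbs (the paper instantiates this with the particular choice $\kappa_b = \kappa_a/(\sum_i y_i^a)^2$, whereas you prove the general scaling law $\bm{y}^{\text{RP}}(\kappa')=\sqrt{\kappa'/\kappa}\,\bm{y}^{\text{RP}}(\kappa)$). Your step (b) and the treatment of the PSD degenerate case actually make the uniqueness claim more airtight than the paper's, which settles it with a ``by definition'' appeal after exhibiting one admissible risk parity solution.
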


\begin{proof}
	This proof is similar to Lemma 2.2 in \cite{bai2016least}. Let  \(\bm{y}^a\) be the optimal solution to the optimization problem in \eqref{eq:NomRP} for some \(\kappa = \kappa_a > 0\). Applying our previous Lemma \ref{lemma:RP}, we must have that
	\[
		y_i^a [\bm{\Sigma} \bm{y}^a]_i = \kappa_a\ \forall\ i.
	\]
	Now, let \(\kappa_b = \kappa_a / \big(\sum_{i=1}^n y_i^a \big)^2\). If we solve \eqref{eq:NomRP} a second time with \(\kappa = \kappa_b\), the optimal solution is
	\[
		y_i^b [\bm{\Sigma} \bm{y}^b]_i = \kappa_b = \frac{\kappa_a}{\big(\sum_{i=1}^n y_i^a\big)^2} = \frac{y_i^a [\bm{\Sigma} \bm{y}^a]_i}{\big(\sum_{i=1}^n y_i^a\big)^2}\quad \forall\ i,
	\]
	which also means that \(\bm{y}^b\) satisfies the budget equality constraint, i.e., \(\bm{1}^\top \bm{y}^b = 1\). Now, since \(\bm{y}^b\) satisfies the risk parity condition and \(\bm{y}^b\in\mathcal{X}\), then, by definition, \(\bm{y}^b = \bm{x}^{\text{RP}}\). Note that this is equivalent to projecting the initial optimal solution \(\bm{y}^a\) onto the set of admissible portfolios \(\mathcal{X}\), i.e., \(\text{\large $\Pi$}_{\mathcal{X}}(\bm{y}^a) = \bm{y}^b = \bm{x}^{\text{RP}}\). In other words, we can scale any arbitrary value of \(\kappa>0\) such that we recover the unique optimal solution \(\bm{x}^{\text{RP}}\in\mathcal{X}\) for a given covariance matrix \(\bm{\Sigma}\in\mathbb{S}_+^n\).
\end{proof}

As shown in Theorem \ref{thm:RP}, there exists a unique risk parity solution for a given \(\bm{\Sigma}\in\mathbb{S}_+^n\). However, the converse is not always true. Two different covariance matrices may lead to the same risk parity portfolio if these two covariance matrices are linearly dependent. Therefore, one-to-one correspondence between a covariance matrix \(\bm{\Sigma}\) and the risk parity solution \(\bm{x}^{\text{RP}}\) is only guaranteed  when \(\bm{\Sigma}\) is linearly independent from other matrices. This leads to the following corollary.

\begin{corollary}
\label{coll:RP}
	Assume we have two covariance matrices, \(\bm{\Sigma}^a, \bm{\Sigma}^b \in \mathbb{S}_+^n\). Let \(\bm{x}^a, \bm{x}^b\in\mathcal{X}\) be the risk parity solutions corresponding to \(\bm{\Sigma}^a\) and \(\bm{\Sigma}^b\), respectively. Since both \(\bm{x}^a\) and \(\bm{x}^b\) are optimal risk parity solutions, then \(\bm{x}^a = \bm{x}^b\) if and only if \(\bm{\Sigma}^a = c\cdot \bm{\Sigma}^b\) for any \(c>0\).	
\end{corollary}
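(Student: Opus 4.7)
The plan is to prove the two implications of the corollary separately, leaning on the risk parity characterization \(x_i [\bm{\Sigma} \bm{x}]_i = x_j [\bm{\Sigma} \bm{x}]_j\) for all \(i,j\) (which falls out of Lemma \ref{lemma:RP}) together with the uniqueness of the risk parity portfolio within \(\mathcal{X}\) (Theorem \ref{thm:RP}).

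For the forward direction I would argue by scale invariance. Assume \(\bm{\Sigma}^a = c\bm{\Sigma}^b\) for some \(c>0\) and let \(\bm{x}^b\) be the risk parity portfolio for \(\bm{\Sigma}^b\). Multiplying its risk parity condition through by \(c\) yields \(x_i^b [\bm{\Sigma}^a \bm{x}^b]_i = c\cdot x_i^b [\bm{\Sigma}^b \bm{x}^b]_i\), which is still constant in \(i\). Hence \(\bm{x}^b\in\mathcal{X}\) also satisfies the risk parity condition for \(\bm{\Sigma}^a\); Theorem \ref{thm:RP} then forces \(\bm{x}^a = \bm{x}^b\). This step is essentially a one-line scaling argument.

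For the reverse direction I would set \(\bm{x}^* := \bm{x}^a = \bm{x}^b\) and apply Lemma \ref{lemma:RP} separately to \(\bm{\Sigma}^a\) and \(\bm{\Sigma}^b\), obtaining constants \(R^a, R^b > 0\) with \(x_i^* [\bm{\Sigma}^a \bm{x}^*]_i = R^a\) and \(x_i^* [\bm{\Sigma}^b \bm{x}^*]_i = R^b\) for every \(i\). Because the logarithmic barrier in \eqref{eq:NomRP} guarantees \(x_i^* > 0\) for all \(i\), I can cancel the \(x_i^*\) factors and take coordinate-wise ratios to conclude \([\bm{\Sigma}^a \bm{x}^*]_i = c\,[\bm{\Sigma}^b \bm{x}^*]_i\) for each \(i\), with \(c := R^a/R^b > 0\). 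Equivalently, \(\bm{\Sigma}^a \bm{x}^* = c\,\bm{\Sigma}^b \bm{x}^*\), which is the collinearity relation that the preceding paragraph refers to as linear dependence of the two covariance matrices.

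The main obstacle is squarely in the reverse direction. The risk parity condition only constrains the action of each covariance matrix on the single vector \(\bm{x}^*\), so what drops out directly is \((\bm{\Sigma}^a - c\,\bm{\Sigma}^b)\bm{x}^* = \bm{0}\) rather than the full matrix identity \(\bm{\Sigma}^a = c\,\bm{\Sigma}^b\). Reconciling this derivation with the linear-dependence language used in the paragraph preceding the corollary — i.e., being explicit about the sense in which the two matrices are ``the same'' from the standpoint of the risk parity problem — is where I would spend the most care, since everything else reduces to scale invariance plus uniqueness of \(\bm{x}^{\text{RP}}\).
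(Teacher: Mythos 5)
Your forward direction is exactly the paper's argument: scale the risk parity condition for \(\bm{\Sigma}^b\) by \(c\), observe that \(\bm{x}^b\in\mathcal{X}\) then satisfies the risk parity condition for \(\bm{\Sigma}^a = c\cdot\bm{\Sigma}^b\), and invoke the uniqueness established in Theorem \ref{thm:RP} to conclude \(\bm{x}^a=\bm{x}^b\). No issues there.

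The obstacle you flag in the reverse direction is not something that more care will dissolve: the implication \(\bm{x}^a=\bm{x}^b \Rightarrow \bm{\Sigma}^a = c\cdot\bm{\Sigma}^b\) is false as stated. Your derivation extracts everything that is actually available, namely \(\bm{\Sigma}^a\bm{x}^* = c\,\bm{\Sigma}^b\bm{x}^*\) with \(c=R^a/R^b\); this constrains the two matrices only through their action on the single vector \(\bm{x}^*\) and cannot be upgraded to a full matrix identity. A concrete counterexample with \(n=2\): take
\[
\bm{\Sigma}^a = \begin{pmatrix} 1 & 0\\ 0 & 1\end{pmatrix},\qquad
\bm{\Sigma}^b = \begin{pmatrix} 1 & \rho\\ \rho & 1\end{pmatrix},\quad 0<\rho<1.
\]
By symmetry both matrices have \(\bm{x}^{\text{RP}} = (1/2,1/2)^\top\) (one checks directly that \(x_1[\bm{\Sigma}^b\bm{x}]_1 = x_2[\bm{\Sigma}^b\bm{x}]_2 = \tfrac{1}{4}(1+\rho)\)), yet \(\bm{\Sigma}^a\neq c\cdot\bm{\Sigma}^b\) for any \(c>0\). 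For what it is worth, the paper's own proof of Corollary \ref{coll:RP} has the same hole: it proves the forward direction carefully and then disposes of the converse with the single sentence asserting that if \(\bm{\Sigma}^a\neq c\cdot\bm{\Sigma}^b\) then \(\bm{x}^a\) is not a risk parity solution with respect to \(\bm{\Sigma}^b\) --- which is a restatement of the contrapositive, not a proof of it. So your proposal is correct about everything it actually proves, and the place where you anticipated spending the most care is precisely where the statement (and the paper's argument) breaks down; the honest conclusion of the converse is the collinearity relation \(\bm{\Sigma}^a\bm{x}^* = c\,\bm{\Sigma}^b\bm{x}^*\) that you derived, not equality of the matrices.
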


\begin{proof}
	The proof arises naturally from Theorem \ref{thm:RP}, which established a unique correspondence between a given covariance matrix \(\bm{\Sigma}\in\mathbb{S}_+^n\) and its  risk parity portfolio \(\bm{x}^{\text{RP}}\in\mathcal{X}\). From Lemma \ref{lemma:RP}, the risk parity solution \(\bm{x}^a\) satisfies
	\[
		x_i^a [\bm{\Sigma}^a \bm{x}^a]_i = x_j^a [\bm{\Sigma}^a \bm{x}^a]_j\quad \forall\ i,j.
	\]
	Let \(\bm{\Sigma}^a = c\cdot \bm{\Sigma}^b\). Then 
	\[
		x_i^a [(c \cdot \bm{\Sigma}^b) \bm{x}^a]_i = x_j^a [(c \cdot \bm{\Sigma}^b) \bm{x}^a]_j \ \iff\
		x_i^a [\bm{\Sigma}^b \bm{x}^a]_i = x_j^a [\bm{\Sigma}^b \bm{x}^a]_j\quad \forall\ i,j,
	\]
	Recall that both \(\bm{x}^a, \bm{x}^b\in\mathcal{X}\). Since \(\bm{x}^a\) satisfies the risk parity condition \(R_i = R_j\ \forall i,j\) with respect to \(\bm{\Sigma}^b\), then, by definition, \(\bm{x}^b = \bm{x}^a\). It follows that if \(\bm{\Sigma}^a \neq c\cdot\bm{\Sigma}^b\) for any \(c>0\), then \(\bm{x}^a\) is not a risk parity solution with respect to \(\bm{\Sigma}^b\), and \(\bm{x}^b\) is not a risk parity solution with respect to \(\bm{\Sigma}^a\). 
\end{proof}
}

Traditionally, the risk parity asset allocation strategy restricts itself to `long-only' portfolios where short sales are disallowed. This aligns well with restrictions typically observed in the asset management industry. However, this restriction stems from a fundamental limitation of risk parity portfolio optimization. As shown in \eqref{eq:NomRP}, risk parity can be formulated as a strictly convex optimization problem with a unique global solution. Other equivalent convex formulations can be found in \cite{maillard2008properties} and \cite{mausser2014computing}. However, once short sales are allowed the problem becomes non-convex and the uniqueness of our solution is no longer guaranteed \cite{bai2016least, costa2020generalized}. For the sake of computational tractability, this paper restricts itself to the long-only condition imposed by traditional risk parity asset allocation strategies. 

Thus far, we have assumed we have knowledge of the true (but latent) covariance matrix \(\bm{\Sigma}\). In practice, we can use the estimated covariance matrix \(\hat{\bm{\Sigma}}(\bm{p})\) from \eqref{eq:Sigma}, which corresponds to some discrete probability distribution \(\bm{p}\). We can find the risk parity portfolio corresponding to an instance of \(\bm{p}\in\mathcal{P}\) through the following system of equations,
\begin{subequations}
\label{eq:minRP}
\begin{align}
	f_{\text{RP}}(\bm{y},\bm{p}) &\triangleq \frac{1}{2}\bm{y}^\top \hat{\bm{\Sigma}}(\bm{p}) \bm{y} - \kappa \sum_{i=1}^n \ln{(y_i)},\label{eq:minRPobj}\\[1ex]
	\bm{y}^{\text{RP}}(\bm{p}) &\triangleq \argmin_{\bm{y}\in\mathbb{R}_+^n}\ f_{\text{RP}}(\bm{y},\bm{p}),\label{eq:minRPopt}\\[1ex]
	\bm{x}^{\text{RP}}(\bm{p}) &\triangleq \text{\large $\Pi$}_{\mathcal{X}}\big(\bm{y}^{\text{RP}}(\bm{p})\big)\label{eq:minRPproj},
\end{align}
\end{subequations}
where \(f_{\text{RP}}: \mathbb{R}_+^n \times \mathcal{P} \rightarrow \mathbb{R}\) is the risk parity objective function. 

{\color{rev}Therefore,} we can use the optimization problem in \eqref{eq:minRP} to find an optimal risk parity portfolio for any estimate of the covariance matrix \(\hat{\bm{\Sigma}}(\bm{p})\) with respect to any instance of \(\bm{p}\in\mathcal{P}\).
 
\section{Distributionally robust risk parity}\label{sec:DRORP}

This section presents our two main contributions: a data-driven DRRP portfolio optimization problem and the SCP--PGA algorithm to solve it. Our immediate goal is twofold: to design an appropriate ambiguity set \(\mathcal{U}_{\bm{p}}\) for our adversarial probability distribution \(\bm{p}\), and to formulate the DRRP minimax problem. We address these two issues in the following two subsections, before proceeding into the algorithmic development. Finally, we will conclude this section by discussing a variant of the risk parity problem where an investor can incorporate estimated expected returns into the optimization problem. 

\subsection{Probability distribution ambiguity set}\label{sec:ambiguity}

Our adversarial probability distribution \(\bm{p}\) belongs to the ambiguity set \(\mathcal{U}_{\bm{p}}\), which we proceed to formally define. A probability distribution must adhere to the simplex \(\mathcal{P}\) defined by the axioms of probability. Moreover, our goal is to define an ambiguity set where the adversarial distribution \(\bm{p}\) must lie within a maximum permissible distance \(d\) from the nominal distribution \(\bm{q}\). Thus, the ambiguity set is 
\begin{equation}
\label{eq:ambigSet}	
	\mathcal{U}_{\bm{p}}(\bm{q},d) \triangleq \big\{\bm{p} \in\mathcal{P} : D(\bm{p},\bm{q}) \leq d\big\}
\end{equation}
where \(D(\bm{p},\bm{q})\) is a convex function that models a given statistical distance measure, while \(d\in\mathbb{R}_+\) is a user-defined bound on the maximum permissible distance between \(\bm{p}\) and \(\bm{q}\). We note that, by definition, \(\mathcal{U}_{\bm{p}}\subseteq \mathcal{P}\).

A statistical distance measure can be used to quantify the similarity between two probability distributions. We limit our choice of statistical distance measures to a subset of convex functions that operate on discrete distributions.

The distributionally robust portfolio selection problem in \cite{calafiore2007ambiguous} used the KL divergence to define the ambiguity set. However, the KL divergence is not a proper metric since it is not symmetric and does not respect the triangle inequality. For two discrete probability distributions \(\bm{p},\ \bm{q}\in\mathcal{P}\), the KL divergence is defined as 
\begin{equation}
\label{eq:KL}
    D_{\text{KL}}(\bm{p},\bm{q}) \triangleq \sum_{t=1}^T p_t \ln{\bigg(\frac{p_t}{q_t}\bigg)}
\end{equation}
The asymmetry of the KL divergence becomes apparent if we reverse the order of the arguments \(\bm{p}\) and \(\bm{q}\) (i.e., \(D_{\text{KL}}(\bm{p},\bm{q}) \neq D_{\text{KL}}(\bm{q},\bm{p})\)). Moreover, the upper bound of the KL divergence is not properly defined, making it difficult to define an appropriate maximum permissible distance between \(\bm{p}\) and \(\bm{q}\). 

A measure closely related to the KL divergence is the JS divergence, which was introduced by \citeauthor{lin1991divergence} \cite{lin1991divergence}. Unlike the KL divergence, the JS divergence is symmetric and has finite bounds. The JS divergence is defined as 
\begin{equation}
\label{eq:delJS}
\begin{aligned}
    D_{\text{JS}}(\bm{p},\bm{q}) &\triangleq \frac{1}{2} D_{\text{KL}}(\bm{p},\bm{m}) + \frac{1}{2} D_{\text{KL}}(\bm{q},\bm{m})\\
    &= \frac{1}{2}\sum_{t=1}^T p_t \ln{(p_t)} + q_t \ln{(q_t)} - (p_t + q_t) \ln{\bigg(\frac{p_t + q_t}{2}\bigg)},
\end{aligned}
\end{equation}
where \(\bm{m} = \frac{1}{2}(\bm{p}+\bm{q}) \in \mathcal{P}\). Given that our definition of the KL divergence in \eqref{eq:KL} uses the natural logarithm, our definition of the JS divergence has the useful property of being bounded between zero and \(\ln(2)\), i.e.,
\[
	0 \leq D_{\text{JS}}(\bm{p},\bm{q}) \leq \ln(2).
\]
We can derive a proper metric from the JS divergence by taking its square root, which is known as  the JS distance \cite{endres2003new, fuglede2004jensen} (i.e., the JS distance is \(\sqrt{D_{\text{JS}}(\bm{p},\bm{q})}\)). This distance measure is bounded between zero and \(\sqrt{\ln(2)}\).

Next, we present the square of the Hellinger distance
\begin{equation}
\label{eq:delH}
	D_{\text{H}}(\bm{p},\bm{q}) \triangleq \frac{1}{2}\sum_{t=1}^T \big(\sqrt{p_t} - \sqrt{q_t}\big)^2.
\end{equation}
The Hellinger distance is a proper distance metric and, by its definition, is bounded between zero and one. We define \(D_{\text{H}}(\bm{p},\bm{q})\) as the squared Hellinger distance to improve computational tractability in practice.

The last distance measure we discuss is the TV distance,
\begin{equation}
\label{eq:delTV}
	D_{\text{TV}}(\bm{p},\bm{q}) \triangleq \frac{1}{2} \|\bm{p} - \bm{q}\|_1, 
\end{equation}
which is a proper distance metric. The TV distance is bounded between zero and one. 

The JS, Hellinger and TV distances are proper metrics and have finite bounds, which will allow us to define a maximum permissible distance \(d\) between \(\bm{q}\) and \(\bm{p}\). Moreover, (\ref{eq:delJS}--\ref{eq:delTV}) are convex functions over \(\bm{p}\in\mathcal{P}\) for any \(\bm{q}\in\mathcal{P}\). In turn, this means the ambiguity set \(\mathcal{U}_{\bm{p}}(\bm{q},d)\) is convex. We note that the functions \eqref{eq:delH} and \eqref{eq:delTV} can be implemented computationally by introducing auxiliary variables during optimization, but this does not fundamentally alter the problem. An example of how to computationally implement them is shown in Appendix \ref{app:distance}.

Our modelling framework provides sufficient flexibility for the user to prescribe their own choice of \(\bm{q}\in\mathcal{P}\). However, given the data-driven nature of our manuscript, we formally define the nominal probability distribution as a discrete uniform distribution, i.e., \(\bm{q} \triangleq [1/T\ \cdots\ 1/T]^\top \in \mathbb{R}^T\). This falls in line with our goal to define the most adversarial distribution \(\bm{p}\) relative to the distribution implied by the data. 

To finalize the definition of \(\mathcal{U}_{\bm{p}}\), we must determine the value of the maximum permissible distance \(d\) based on the investor's {\color{rev}desired degree of robustness}. The distance measures in (\ref{eq:delJS}--\ref{eq:delTV}) have theoretical lower and upper bounds. In particular, the upper bounds are only attainable if the nominal distribution \(\bm{q}\) differs the most from our adversarial distribution \(\bm{p}\). For a discrete probability distribution, this happens when both the nominal and adversarial distributions assign a probability \(q_i = p_j = 1\) for scenarios \(i\neq j\), with all other scenarios having a probability of zero. In practice, the theoretical upper bounds are unattainable under the assumption that \(\bm{q}\) is a discrete uniform distribution. Consider the following example of an extreme probability distribution, \(\bm{s} \triangleq [1\ 0\ \cdots\ 0]^\top \in\mathbb{R}^T\), which assigns all of its weight to a single scenario. The distribution \(\bm{s}\) is the most we can differ from the uniform distribution \(\bm{q}\). Thus, in practice, the true upper bound is defined as \(B(T) \triangleq D(\bm{s}, \bm{q})\in\mathbb{R}_+\), where the argument \(T\) corresponds to the dimension of the fixed distributions \(\bm{s}\) and \(\bm{q}\). We define the practical upper bounds of our three distance measures as
\begin{subequations}
\begin{align}
	B_{\text{JS}}(T) &\triangleq D_{\text{JS}}(\bm{s},\bm{q}),\\
	B_{\text{H}}(T) &\triangleq D_{\text{H}}(\bm{s},\bm{q}),\\
	B_{\text{TV}}(T) &\triangleq D_{\text{TV}}(\bm{s},\bm{q}).
\end{align}
\end{subequations}
For example, if our data consist of ten scenarios (\(T=10\)), then the upper bounds of the measures in (\ref{eq:delJS}--\ref{eq:delTV}) are
\[
\begin{aligned}
	B_{\text{JS}}(10) &=\frac{1}{2}\Big((0.1)\ln(0.1) - (1.1)\ln(0.55) + (9)(0.1)\ln(2)\Big) \approx 0.5256,\\
	B_{\text{H}}(10) &= \frac{1}{2}\Big(1 - 2\sqrt{0.1} + (10)(0.1) \Big) \approx 0.6838,\\
	B_{\text{TV}}(10) &= \frac{1}{2}\Big( 0.9 + (9)(0.1) \Big) = 0.9.
\end{aligned}
\]
As \(T\) increases, the upper bounds approach their theoretical values (i.e., as \(T\rightarrow\infty\), we have \(B_{\text{JS}}\rightarrow \ln(2)\), \(B_{\text{H}}\rightarrow 1\) and \(B_{\text{TV}}\rightarrow 1\)). The purpose of this exercise is to avoid defining \(d\) in terms of a theoretical upper bound. Instead, we seek to define it relative to the number of scenarios in our dataset, which is more relevant in practice. 

We define an appropriate maximum permissible distance \(d\) between our nominal and adversarial distributions based on the upper bound \(B\) and the user-defined {\color{rev}degree of robustness} \(0\leq\omega\leq 1\). In turn, we can use this to constrain the statistical distance between \(\bm{p}\) and \(\bm{q}\) (i.e., \(D(\bm{p},\bm{q})\leq d_{\text{JS}}\)). 

Recall that, in the case of the JS distance, we must square the {\color{rev}degree of robustness} since the JS divergence is the square of the JS distance. Thus, for a given {\color{rev}degree of robustness} \(\omega\) and number of scenarios \(T\), the maximum permissible distance is
\begin{equation}
\label{eq:limitJS}
	d_{\text{JS}}(\omega,T) \triangleq \omega^2 B_{\text{JS}}(T).
\end{equation}
Similarly, since \(D_{\text{H}}(\bm{p},\bm{q})\) in \eqref{eq:delH} is defined as the square of the Hellinger distance, the maximum permissible distance is
\begin{equation}
\label{eq:limitH}
	d_{\text{H}}(\omega,T) \triangleq \omega^2 B_{\text{H}}(T).
\end{equation}
Finally, given that the TV distance in \eqref{eq:delTV} is already a proper metric, we define the maximum permissible distance as
\begin{equation}
\label{eq:limitTV}
	d_{\text{TV}}(\omega,T) \triangleq \omega B_{\text{TV}}(T).
\end{equation}
To properly define the ambiguity set \(\mathcal{U}_{\bm{p}}\) in \eqref{eq:ambigSet}, the we must choose a single distance measure and define \(D(\bm{p},\bm{q})\) as one of the options in (\ref{eq:delJS}--\ref{eq:delTV}), with \(d\) defined accordingly.

\subsection{Minimax problem}\label{sec:minimax}

For a given dataset \(\hat{\bm{\xi}}\), our problem is defined by the investor's choice of statistical distance measure and {\color{rev}degree of robustness}. Given this information, we aim to construct an optimal DRRP portfolio \(\bm{x}^*\). The nominal risk parity problem in \eqref{eq:minRP} is strictly convex for any given estimate of the covariance matrix \(\hat{\bm{\Sigma}}(\bm{p})\). Therefore, there exists a unique risk parity portfolio \(\bm{x}^{\text{RP}}(\bm{p})\) for every instance of \(\bm{p}\in\mathcal{P}\). 

The distinction between \(\bm{x}^*\) and \(\bm{x}^{\text{RP}}(\bm{p})\) is the following. The latter is the optimal risk parity portfolio for some arbitrary instance of \(\bm{p}\in\mathcal{P}\), as shown in \eqref{eq:minRP}. On the other hand, we use \(\bm{x}^*\) to denote the portfolio resulting from the most adversarial instance of \(\bm{p}\in\mathcal{U}_{\bm{p}}\) such that it maximizes our risk parity objective function. Thus, our optimal DRRP portfolio \(\bm{x}^*\) can be formulated as a minimax problem where we seek an optimal portfolio against an optimally adversarial discrete probability distribution.

The risk parity problem in \eqref{eq:minRP} requires that we first optimize an unconstrained problem and then project it onto the set of admissible portfolios. However, for simplicity, let us ignore the projection step and treat the unconstrained auxiliary variable \(\bm{y}\) as a proxy\footnote{Projecting the auxiliary variable \(\bm{y}\in\mathbb{R}_+^n\) onto \(\mathcal{X}\) is a trivial step, as shown in  \eqref{eq:RPsoln}.} for our asset weights \(\bm{x}\). Thus, for now, let the variables of our minimax problem be \(\bm{y}\) and \(\bm{p}\).

Recall our original definition of the portfolio variance, which was expressed in two equivalent forms in \eqref{eq:PortVarEst1} and \eqref{eq:PortVarEst2}. Moreover, recall our original definition of the risk parity objective function \(f_{\text{RP}}(\bm{y}, \bm{p})\) in \eqref{eq:minRPobj}. Using both expressions of the portfolio variance, we can restate our risk parity objective function in two equivalent forms
\begin{subequations}
\label{eq:RPFunc}
\begin{align}
	\label{eq:RPFuncY}
	 f_{\text{RP}}(\bm{y},\bm{p}) &\triangleq \frac{1}{2}\bm{y}^\top \hat{\bm{\Sigma}}(\bm{p}) \bm{y} - \kappa \sum_{i=1}^n \ln(y_i)\\[1ex]
	 \label{eq:RPFuncP}
	 							&\triangleq \frac{1}{2}\bigg(\bm{p}^\top \hat{\bm{\pi}}^2(\bm{y}) - \bm{p}^\top \hat{\bm{\Theta}}(\bm{y}) \bm{p}\bigg) - \kappa \sum_{i=1}^n \ln(y_i),
\end{align}
\end{subequations}
where \eqref{eq:RPFuncY} is exactly the same as \eqref{eq:minRPobj} and is restated for clarity, while \eqref{eq:RPFuncP} presents \(f_{\text{RP}}(\bm{y},\bm{p})\) explicitly in terms of \(\bm{p}\). Formulating the objective function in these two equivalent forms allows us to observe how the function acts upon both the decision variable \(\bm{y}\) and the adversarial probability \(\bm{p}\). {\color{rev}It follows that the corresponding DRRP problem can be stated as a minimax problem,}
\begin{equation}
\label{eq:RPminimax}
	\min_{\bm{y}\in\mathbb{R}_+^n}\ \max_{\bm{p}\in\mathcal{U}_{\bm{p}}}\quad f_{\text{RP}}(\bm{y},\bm{p}). 
\end{equation}

{\color{rev}
\begin{theorem}[\citeauthor{neumann1928theorie}'s minimax theorem \cite{neumann1928theorie}]
\label{thm:minimax}

	Let \(\mathcal{Y}\subset\mathbb{R}^n\) and \(\mathcal{P}\subset\mathbb{R}^T\). If the function \(f:\mathcal{Y}\times\mathcal{P} \rightarrow \mathbb{R}\) is continuous convex--concave, where \( f(\cdot, \bm{p}): \mathcal{Y} \rightarrow \mathbb{R}\) is convex for any fixed \(\bm{p}\) and \( f(\bm{y},\cdot): \mathcal{P} \rightarrow \mathbb{R}\) is concave for any fixed \(\bm{y}\), then we have that 
\begin{equation}
\label{eq:minimaxThm}
	\max_{\bm{p}\in\mathcal{P}}\ \min_{\bm{y}\in\mathcal{Y}}\ f(\bm{y},\bm{p}) = \min_{\bm{y}\in\mathcal{Y}}\ \max_{\bm{p}\in\mathcal{P}}\ f(\bm{y},\bm{p}),
\end{equation}
where any local optimum is a global optimum. Assume the global optimal solution of \eqref{eq:minimaxThm} is \((\bm{y}^*,\bm{p}^*)\). It follows that the minimax theorem can be equivalently restated as the saddle-point inequality
\[
	f(\bm{y}^*,\bm{p}) \leq f(\bm{y}^*,\bm{p}^*) \leq f(\bm{y},\bm{p}^*)\ \  \forall\ \bm{y}\in\mathcal{Y},\ \bm{p}\in\mathcal{P}.
\]
\end{theorem}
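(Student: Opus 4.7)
The plan is to split the claimed equality into its two inequalities and then recover the saddle-point characterization as a corollary of the constructed optimizer. The ``weak'' direction $\max_{\bm{p}}\min_{\bm{y}} f(\bm{y},\bm{p}) \leq \min_{\bm{y}}\max_{\bm{p}} f(\bm{y},\bm{p})$ requires no convexity hypothesis at all: for any fixed $\bm{y}_0 \in \mathcal{Y}$ and $\bm{p}_0 \in \mathcal{P}$ the chain $\min_{\bm{y}} f(\bm{y}, \bm{p}_0) \leq f(\bm{y}_0, \bm{p}_0) \leq \max_{\bm{p}} f(\bm{y}_0, \bm{p})$ is tautological, and taking $\max_{\bm{p}_0}$ on the left with $\min_{\bm{y}_0}$ on the right preserves the inequality.

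The substantive direction is the reverse, and my approach is to construct a saddle point $(\bm{y}^*, \bm{p}^*)$ directly via a fixed-point argument on best-response correspondences. I would define $Y^*(\bm{p}) \triangleq \argmin_{\bm{y}\in\mathcal{Y}} f(\bm{y},\bm{p})$ and $P^*(\bm{y}) \triangleq \argmax_{\bm{p}\in\mathcal{P}} f(\bm{y},\bm{p})$ and form the product correspondence $\Phi(\bm{y},\bm{p}) = Y^*(\bm{p}) \times P^*(\bm{y})$. Under the convex--concave continuity of $f$, each section is non-empty and convex, and the Berge maximum theorem supplies upper semi-continuity of $\Phi$. Kakutani's fixed-point theorem then yields $(\bm{y}^*, \bm{p}^*)$ with $\bm{y}^* \in Y^*(\bm{p}^*)$ and $\bm{p}^* \in P^*(\bm{y}^*)$; the defining properties of $Y^*$ and $P^*$ at this fixed point are precisely the saddle-point inequality $f(\bm{y}^*, \bm{p}) \leq f(\bm{y}^*, \bm{p}^*) \leq f(\bm{y}, \bm{p}^*)$ for all admissible $\bm{y},\bm{p}$. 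Chaining
\[
    \max_{\bm{p}}\min_{\bm{y}} f \;\geq\; \min_{\bm{y}} f(\bm{y},\bm{p}^*) \;=\; f(\bm{y}^*,\bm{p}^*) \;=\; \max_{\bm{p}} f(\bm{y}^*,\bm{p}) \;\geq\; \min_{\bm{y}}\max_{\bm{p}} f
\]
with the weak-duality inequality closes the equality and simultaneously certifies that every local optimum is global.

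The main obstacle is the compactness prerequisite for Kakutani, which the theorem statement leaves implicit. Both $\mathcal{Y}$ and $\mathcal{P}$ must be non-empty, convex, and compact; $\mathcal{P}$ is a simplex and thus satisfies this, but the natural $\mathcal{Y} = \mathbb{R}_+^n$ for the risk parity application does not. The standard remedy is coercivity of $f$ in $\bm{y}$, which the objective $f_{\text{RP}}$ does enjoy through its logarithmic barrier (blowing up at the boundary of $\mathbb{R}_+^n$) and its quadratic growth at infinity, allowing the inner minimization to be restricted a priori to a compact sublevel set on which the fixed-point machinery applies without modification. An alternative route that sidesteps Kakutani altogether is Sion's minimax theorem, which weakens ``concavity'' to quasi-concavity and demands compactness on only one side; this is often cited as the cleanest modern proof of von Neumann's result in the generality needed here.
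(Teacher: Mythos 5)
The paper does not prove this statement at all: it is flagged by its bracketed name as one of the ``well-known and established'' theorems that, per the notation section, are ``presented without proof,'' and the authors simply cite von Neumann. Your proposal therefore supplies an argument where the paper offers none, and the argument you give is the standard modern one---weak duality by the tautological chain, plus existence of a saddle point via Berge's maximum theorem and Kakutani's fixed-point theorem applied to the product of best-response correspondences---and it is correct in outline. You also correctly identify the one real defect, which is actually a defect in the paper's statement of the theorem rather than in your proof: the sets $\mathcal{Y}$ and $\mathcal{P}$ are only assumed to be subsets of $\mathbb{R}^n$ and $\mathbb{R}^T$, with no compactness or even convexity hypothesis, and without compactness neither the existence of the inner optima nor Kakutani applies. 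Your proposed remedies (coercivity of $f_{\text{RP}}$ in $\bm{y}$ from the log-barrier and quadratic terms to restrict to a compact sublevel set, or invoking Sion's theorem, which needs compactness on only one side) are both legitimate and are in fact what the paper implicitly relies on when it later applies the theorem with $\bm{y}\in\mathbb{R}_+^n$. Two small points to tighten: the equivalence between the fixed point and the saddle-point inequality should be stated as exactly the definitions of $\bm{y}^*\in Y^*(\bm{p}^*)$ and $\bm{p}^*\in P^*(\bm{y}^*)$, which you do; but the closing claim that the chain ``simultaneously certifies that every local optimum is global'' does not follow from the chain itself---it requires the separate (easy) observation that a local minimum of the convex function $f(\cdot,\bm{p}^*)$ and a local maximum of the concave function $f(\bm{y}^*,\cdot)$ are global, which is worth one explicit sentence.
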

}

As we saw in Section \ref{sec:param}, both \(\hat{\bm{\Sigma}}(\bm{p})\) and \(\hat{\bm{\Theta}}(\bm{y})\) are PSD for any \(\bm{p}\in\mathcal{P}\) and \(\bm{y}\in\mathbb{R}^n_+\), respectively. Therefore, \(f_{\text{RP}}(\bm{y},\cdot): \mathcal{P}\rightarrow\mathbb{R}\)   is concave for every \(\bm{y}\in\mathbb{R}_+^n\), {\color{rev}while, by Lemma \ref{lemma:RP}, the function \(f_{\text{RP}}(\cdot,\bm{p}): \mathbb{R}_+^n\rightarrow\mathbb{R}\) is strictly convex for every \(\bm{p}\in\mathcal{P}\)}. Moreover, the sets \(\mathcal{X}\) and \(\mathcal{U}_{\bm{p}}\) are convex. {\color{rev}This means that \(f_{\text{RP}}\) is} a convex--concave minimax problem. {\color{rev}Thus, by Theorem \ref{thm:minimax},} the saddle-point inequality holds for the DRRP problem, i.e., 
\begin{equation}
\label{eq:minimaxineq}
	f_{\text{RP}}(\bm{y}^*,\bm{p}) \leq f_{\text{RP}}(\bm{y}^*,\bm{p}^*) \leq f_{\text{RP}}(\bm{y},\bm{p}^*)\ \  \forall\ \bm{y}\in\mathbb{R}_+^n,\ \bm{p}\in\mathcal{U}_{\bm{p}},
\end{equation}
where \((\bm{y}^*,\bm{p}^*)\) is the optimal solution (i.e., the saddle point) of \(f_{\text{RP}}(\bm{y},\bm{p})\).

The maximization step in \eqref{eq:RPminimax} is also meaningful in a financial context. Consider the definition of \(f_{\text{RP}}(\bm{y},\bm{p})\) in \eqref{eq:RPFuncP} where, without loss of generality, we have defined the portfolio variance using the unnormalized proxy variable \(\bm{y}\). The maximization step in \eqref{eq:RPminimax} pertains solely to the portfolio variance given that the logarithmic barrier term only acts on the variable \(\bm{y}\). Thus, intuitively, the maximization step aims to find the most adversarial probability distribution \(\bm{p}\) such that we attain the worst-case instance of the portfolio variance. This leads to the following conclusion: the minimax problem in \eqref{eq:RPminimax} seeks the optimal risk parity portfolio with respect to the worst-case portfolio variance. 

{\color{rev}
\subsection{Robust counterpart of the risk parity problem}\label{sec:robConvex}

The convex--concave nature of the DRRP minimax problem means that we are able to exploit the dual of the maximization step in order to reformulate the problem as a simple convex minimization problem. In particular, we will use the robust framework proposed by \citeauthor{ben2013robust} \cite{ben2013robust}, which prescribes a method to reduce a convex--linear minimax problem with an ambiguity set similar to the one presented in \eqref{eq:ambigSet} into a straightforward convex minimization problem. 

The framework in \cite{ben2013robust} requires that the initial minimax problem has the generic form
\begin{equation}
\label{eq:BenTal}
	\min_{\bm{z}\in\mathcal{Z}}\ \max_{\bm{p}\in\mathcal{U}}\quad (\bm{a} + \bm{C} \bm{p})^\top \bm{f}(\bm{z}),
\end{equation}
where \(\bm{a}\in\mathbb{R}^k\) and \(\bm{C}\in\mathbb{R}^{k\times T}\) are constants, \(\bm{z}\in\mathcal{Z}\subseteq\mathbb{R}^n\) is some generic decision variable, and \(\bm{f}:\mathbb{R}^n \rightarrow\mathbb{R}^k\) is some generic vector-valued function. It is clear that \eqref{eq:BenTal} is linear in \(\bm{p}\). Thus, at first glance, we are unable apply this robust framework to the DRRP problem in \eqref{eq:RPFuncP} since it is quadratic in \(\bm{p}\).

In particular, the quadratic expression of \(\bm{p}\) in \eqref{eq:RPFuncP} stems from the presence of the portfolio variance. However, as shown by \citeauthor{gotoh2018robust} \cite{gotoh2018robust}, the variance can be recast as a linear function in \(\bm{p}\),
\begin{align}
\hat{\sigma}_\pi^2(\bm{y},\bm{p}) &= \mathbb{E}\Big[ \big(\pi(\bm{y}) - \mathbb{E}[\pi(\bm{y})]\big)^2 \Big] = \min_{c}\ \mathbb{E}\big[(\pi(\bm{y}) - c)^2\big]\nonumber\\ 
	\label{eq:PortVarEst3}
    &= \min_{c}\ \sum_{t=1}^T p_t\cdot \big([\hat{\bm{\xi}}^\top \bm{y}]_t - c\big)^2,
\end{align}
where \(c\in\mathbb{R}\) is an auxiliary variable that serves as a placeholder for the expected value of \(\pi(\bm{y})\). Note that we have replaced the asset weight variable \(\bm{x}\) with its unnormalized proxy \(\bm{y}\) to align it with our previous derivation of the minimax problem. 

After linearizing the variance in \(\bm{p}\), we can restate the optimization problem in (\ref{eq:RPFunc}--\ref{eq:RPminimax}) as follows
\[
	\min_{\bm{y}\in\mathbb{R}_+^n, c}\ \ \max_{\bm{p}\in\mathcal{U}}\quad \sum_{t=1}^T p_t \cdot \big([\hat{\bm{\xi}}^\top \bm{y}]_t - c\big)^2 - \kappa \sum_{i=1}^n \ln(y_i).
\]	
Since this version of the minimax problem is linear in \(\bm{p}\), we can use the robust framework in \cite{ben2013robust} to introduce a distributionally \textit{robust counterpart}\footnote{\color{rev}For the remainder of this manuscript, we refer to the optimization problem in \eqref{eq:singleConvex} as the `robust counterpart' to differentiate it from the DRRP minimax problem.} to the risk parity problem as follows
\begin{equation}
\label{eq:singleConvex}
	\min_{\lambda\in\mathbb{R}_+, \bm{y}\in\mathbb{R}_+^n, c, \rho}\ \rho + d\cdot \lambda + \frac{\lambda}{T} \sum_{t=1}^T D^*\bigg( \frac{\big([\hat{\bm{\xi}}^\top \bm{y}]_t - c\big)^2 - \rho}{\lambda}\bigg) - \kappa \sum_{i=1}^n \ln(y_i),
\end{equation}
where \(d = d(\omega,T)\) is the maximum permissible distance between \(\bm{p}\) and \(\bm{q}\), and \(D^*(\cdot)\) is the conjugate function of the statistical distance function \(D(\cdot)\). For example, in the case of the Hellinger distance, \(D_{\text{H}}^*(a) = a/(1-a)\) for \(a<1\). Other examples of statistical distance measures and their conjugates can be found in \cite{ben2013robust}. By design, the optimization problem in \eqref{eq:singleConvex} is convex over \(\bm{y}\in\mathbb{R}_+^n\), \(\lambda\in\mathbb{R}_+\), and \(c,\rho \in\mathbb{R}\) (for reference, see \cite{ben2013robust}). Note that after solving this problem, we must project the optimal solution \(\bm{y}^*\) onto the set of admissible portfolios, i.e., \(\bm{x}^* = \text{\large $\Pi$}_{\mathcal{X}}(\bm{y}^*)\). Thus, after linearizing the problem in \(\bm{p}\), we are able to reformulate the DRRP minimax problem into its robust counterpart, which is a standard convex minimization problem. 

Nevertheless, the robust counterpart suffers from a major drawback: it is highly non-linear. Thus, as the DRRP problem increases in size (both in number of assets \(n\) and scenarios \(T\)), the computational cost of solving the robust counterpart increases significantly. This is compounded by the computational cost associated with the linearization of the portfolio variance, which requires us to model the variance as a minimization problem of its own. The numerical performance of the robust counterpart in \eqref{eq:singleConvex} is evaluated later on in Section \ref{sec:NumPerf}. Although the robust counterpart provides a theoretically sound avenue to solve the DRRP problem, our experimental results show that it becomes numerically intractable as the scale of the problem increases. Therefore, this motivates our introduction of a numerically efficient gradient-based algorithm to solve the DRRP problem. As we will see later on, a gradient-based algorithm that iteratively alternates between the minimization and maximization steps is much more efficient than the robust counterpart in practice. The algorithmic development is presented in Sections \ref{sec:algoGDA} and \ref{sec:algoGA}. 
}

\subsection{Projected gradient descent--ascent}\label{sec:algoGDA}

{\color{rev}We turn our attention to the enhancement of numerical performance by introducing an algorithm designed to exploit the convex--concave structure of the original DRRP minimax problem. We begin by discussing a gradient-based algorithm, which we will refer to as the PGDA algorithm. The PGDA algorithm works by sequentially alternating between descending in \(\bm{y}\) and ascending in \(\bm{p}\) until convergence.}

To retain feasibility after each iteration, we project each step in \(\bm{y}\) and \(\bm{p}\) onto the sets \(\mathbb{R}_+^n\) and \(\mathcal{U}_{\bm{p}}\), respectively. In particular, the non-linearity of the statistical distance measure means that the projection onto the ambiguity set \(\mathcal{U}_{\bm{p}}\) is non-trivial and cannot be solved in closed form. Instead, the projection must be solved as a constrained optimization problem. A Euclidean projection ensures the problem is strictly convex, guaranteeing the uniqueness of our solution. We define the projection of some arbitrary vector \(\bm{u}\in\mathbb{R}^T\) onto the set \(\mathcal{U}_{\bm{p}}\) as follows,
\begin{subequations}
\label{eq:maxRPproj}
\begin{equation}
	\label{obj:maxRPproj}
	\text{\large $\Pi$}_{\mathcal{U}_{\bm{p}}}(\bm{u}) \triangleq\ \ \argmin_{\bm{p}}\quad \|\bm{u} - \bm{p}\|_2^2\\
\end{equation}
\vspace{\dimexpr-\abovedisplayskip-\belowdisplayskip -0.5em}
\begin{alignat}{4}
		&\qquad\qquad\qquad\ \text{s.t.}\qquad  &\bm{1}^T \bm{p} 	&= 1,\label{const:axiom2}\\
		& &D(\bm{p},\bm{q}) 	&\leq d,\label{const:distance} \\
		& &\bm{p}					&\geq 0,\label{const:axiom1}
\end{alignat}
\end{subequations}
where the constraints (\ref{const:axiom2}--\ref{const:axiom1}) arise from the ambiguity set \(\mathcal{U}_{\bm{p}}(\bm{q},d)\). In particular, constraint \eqref{const:distance} is shown with respect to a generic distance measure \(D(\bm{p},\bm{q})\), which can be defined by the user as any of the measures in (\ref{eq:delJS}--\ref{eq:delTV}) with an appropriate maximum permissible distance \(d\). Thus, for some point \(\bm{u}\), the projection \(\text{\large $\Pi$}_{\mathcal{U}_{\bm{p}}}(\bm{u})\) finds the closest solution within the ambiguity set \(\mathcal{U}_{\bm{p}}(\bm{q},d)\). 

Likewise, we retain feasibility in the descent step by projecting each iteration in the descent direction onto the set \(\mathbb{R}_+^n\). This projection is trivial and, for some arbitrary point \(\bm{z}\in\mathbb{R}^n\), can be computed as follows
\begin{equation}
	\label{eq:minRPprojY}
	\text{\large $\Pi$}_{\mathbb{R}_+^n}(\bm{z}) \triangleq\ \begin{cases} z_i \quad &\text{if } z_i > 0\\
						0^+ &\text{otherwise} \end{cases}\quad \text{for } i = 1, \dots, n,
\end{equation}
where we inspect every element of \(\bm{z}\) and set any non-positive element to an arbitrarily small positive value.\footnote{Any non-positive value must be replaced with a strictly positive value, \(0^+\) due to the logarithm barrier term in our objective function. Thus,\(0^+\) can be set to some small positive value during implementation.}

Like an unconstrained gradient descent--ascent algorithm, we take steps to descend in \(\bm{y}\) and ascend in \(\bm{p}\) in the direction of the respective gradients of \(f_{\text{RP}}(\bm{y},\bm{p})\). The gradients of \(f_{\text{RP}}(\bm{y},\bm{p})\) are
\begin{align}
	\nabla_{\bm{y}} f_{\text{RP}}(\bm{y},\bm{p}) &= \hat{\bm{\Sigma}}(\bm{p}) \bm{y} - \kappa \bm{y}^{-1},\label{eq:gradY}\\[1ex]
	\nabla_{\bm{p}} f_{\text{RP}}(\bm{y},\bm{p}) &= \frac{1}{2}\hat{\bm{\pi}}^2(\bm{y}) - \hat{\bm{\Theta}}(\bm{y}) \bm{p},\label{eq:gradP}
\end{align}
where \(\bm{y}^{-1} = [1/y_1\ \cdots\ 1/y_n]^\top\). 

Given that the feasible sets \(\mathbb{R}_+^n\) and \(\mathcal{U}_{\bm{p}}\) are convex, we can design the search directions in both \(\bm{y}\) and \(\bm{p}\) such that we retain feasibility after each iteration. Assume we have some feasible solutions \(\bm{y}^k \in \mathbb{R}_+^n\) and \(\bm{p}^k\in\mathcal{U}_{\bm{p}}\). The search directions are
\begin{align}
	\bm{g}^k &\triangleq \text{\large $\Pi$}_{\mathbb{R}_+^n}\Big(\bm{y}^{k} - \alpha_k \nabla_{\bm{y}} f_{\text{RP}}(\bm{y}^k,\bm{p}^k) \Big) - \bm{y}^k,\nonumber \\
	\label{eq:projStep}\bm{h}^k &\triangleq \text{\large $\Pi$}_{\mathcal{U}_{\bm{p}}}\Big(\bm{p}^{k} + \gamma_k \nabla_{\bm{p}} f_{\text{RP}}(\bm{y}^{k},\bm{p}^k) \Big) - \bm{p}^k,
\end{align}
where \(\alpha_k\) and \(\gamma_k\) are the step sizes in each direction. To ensure that our next iteration remains within the feasible set, we define the search parameters \(\eta_{\bm{y}}, \eta_{\bm{p}}\in[0,1]\). Thus, our next iterations in each direction are
\begin{align}
	\bm{y}^{k+1} &= \bm{y}^k + \eta_{\bm{y}} \bm{g}^k,\nonumber \\
	\label{eq:gradStep}\bm{p}^{k+1} &= \bm{p}^k + \eta_{\bm{p}} \bm{h}^k.
\end{align}
The points \(\bm{y}^{k+1}\) and \(\bm{p}^{k+1}\) are the result of linear combinations between two feasible points in each set, respectively. Since the sets are convex, the points \(\bm{y}^{k+1}\) and \(\bm{p}^{k+1}\) are feasible by definition.

We defer to the Barzilai--Borwein method \cite{barzilai1988two} to define the step sizes \(\alpha_k\) and \(\gamma_k\). Specifically, we use the following definition of the Barzilai--Borwein method. {\color{rev}For any iteration \(k\geq 1\), we have 
\begin{align}
	\alpha_{k} &= \frac{\|\bm{y}^k - \bm{y}^{k-1}\|_2^2}{\phantom{\Big(} \big| (\bm{y}^k - \bm{y}^{k-1})^\top \big(\nabla_{\bm{y}} f_{\text{RP}}(\bm{y}^k,\bm{p}^k) - \nabla_{\bm{y}} f_{\text{RP}}(\bm{y}^{k-1},\bm{p}^{k-1}) \big) \big|},\label{eq:alpha} \\[1.5ex]
	\gamma_{k} &= \frac{\|\bm{p}^k - \bm{p}^{k-1}\|_2^2}{\phantom{\Big(} \big| (\bm{p}^k - \bm{p}^{k-1})^\top \big(\nabla_{\bm{p}} f_{\text{RP}}(\bm{y}^k,\bm{p}^k) - \nabla_{\bm{p}} f_{\text{RP}}(\bm{y}^{k-1},\bm{p}^{k-1}) \big)\big|}.\label{eq:gamma}
\end{align}
In their seminal work, \citeauthor{barzilai1988two} \cite{barzilai1988two} produce a computationally inexpensive quasi-Newton step size through a scaled product of the identity matrix designed to approximate the Hessian of \(f_{\text{RP}}\) in either direction, i.e., \(\nabla_{\bm{y}}^2 f_{\text{RP}}(\bm{y}^k,\bm{p}^k) \approx (\alpha_k \bm{I}_n)^{-1}\) and \(\nabla_{\bm{p}}^2 f_{\text{RP}}(\bm{y}^k,\bm{p}^k) \approx (\gamma_k \bm{I}_T)^{-1}\), where \(\bm{I}_d\) is the identity matrix of dimension \(d\).} 

The Barzilai--Borwein step size is sometimes referred to as the `spectral step size'. In the case of projected gradient descent, this class of algorithms is sometimes referred to as `spectral projected gradient descent' \cite{birgin2000nonmonotone}. 

Next, we discuss how to determine the search parameters \(\eta_{\bm{y}}, \eta_{\bm{p}}\in(0,1]\). Specifically, we favour the non-monotone Grippo--Lampariello--Lucidi (GLL) line search proposed in \cite{grippo1986nonmonotone}. The GLL line search method has been shown to work well with spectral projected gradient descent and ensures global convergence on closed convex sets \cite{birgin2000nonmonotone, dai2005projected}. 

A brief overview of this line search method follows. Consider the descent step in \(\bm{y}\). For a given integer \(m\geq 1\), we are searching for \(\eta_{\bm{y}}\in(0,1]\) such that 
\begin{equation}
\label{eq:GLL}
	f_{\text{RP}}(\bm{y}^k+\eta_{\bm{y}} \bm{g}^k,\bm{p}^k) \leq \max\limits_{j\in\mathcal{J}} f_{\text{RP}}(\bm{y}^{k-j}, \bm{p}^{k-j}) + \beta \eta_{\bm{y}} (\bm{g}^k)^\top \nabla_{\bm{y}} f_{\text{RP}}(\bm{y}^k,\bm{p}^k)
\end{equation}
where \(\mathcal{J}\triangleq\big\{j\in\mathbb{Z} : 0\leq j\leq \min \{k,m-1\}\big\}\) and \(\beta\in(0,1)\) is some predefined constant. Intuitively, a larger value of \(\eta_{\bm{y}}\) corresponds to a more aggressive descent step. Thus, we can set \(\eta_{\bm{y}} = 1\) and shrink it appropriately by some fixed factor \(\tau\in(0,1)\), resulting in an inexact but fast method to determine an appropriate value for \(\eta_{\bm{y}}\). 

The GLL method stems from an Armijo-type line search, but it allows us to take greedier steps. For example, if we set \(m=1\), then we revert back to a traditional Armijo-type line search method and the condition in \eqref{eq:GLL} causes our objective function to decrease monotonically. Thus, by considering multiple previous iterations of the objective value we allow for a non-monotonic decrease.

For the purpose of the PGDA algorithm, the ascent direction follows the same logic. However, we do not discuss it in detail for the sake of brevity. Instead, the complete PGDA algorithm is presented in Algorithm \ref{algo:DRORP}, which shows how to calculate the steps in the descent and ascent directions. 

Although the global convergence of spectral projected gradient descent with a GLL line search has been established \cite{birgin2000nonmonotone, dai2005projected}, we purposely avoid claiming that this is true for PGDA. However, we note that for appropriate step sizes, the convergence of constrained convex--concave minimax problems has been previously established {\color{rev}(see \cite{nedic2009subgradient} for a complete proof of convergence for a PGDA algorithm).} For the purpose of this manuscript, the PGDA algorithm serves solely as a stepping stone towards the development of the SCP--PGA algorithm.

\begin{algorithm}
	\KwIn{Data \(\hat{\bm{\xi}}\in\mathbb{R}^{n\times T}\); {\color{rev}Degree of robustness} \(\omega \in(0,1)\);  Distance measure \{JS, Hellinger, TV\}; Nominal distribution \(\bm{q}\in\mathcal{P}\); Risk parity constant \(\kappa > 0\); Initial step sizes \(\alpha_0, \gamma_0 > 0\); Initial proxy portfolio \(\bm{y}^0\); Convergence tolerance \(\varepsilon_0\); Search control parameters \(\beta, \tau\in (0,1)\); GLL parameter \(m\geq 1\)}
	Find the distance limit \(d(\omega,T)\) as shown in either of (\ref{eq:limitJS}--\ref{eq:limitTV})\;
	Initialize the adversarial distribution: \(\bm{p}^0 = \bm{q}\)\;
    Initialize the convergence measure: \(\varepsilon = 1\)\;
	Initialize the counter: \(k = 0\)\;
    \While{\(\varepsilon > \varepsilon_0\)}{
    	\If{\(k \geq 1\)}{
            Update \(\alpha_k\) as shown in \eqref{eq:alpha}\;
            Update \(\gamma_k\) as shown in \eqref{eq:gamma}\;
        }

		\(\bm{g}^k = \text{\large $\Pi$}_{\mathbb{R}_+^n}\Big(\bm{y}^{k} - \alpha_k \nabla_{\bm{y}} f_{\text{RP}}(\bm{y}^k,\bm{p}^k)\Big) - \bm{y}^k\)\;      
                \(\eta_{\bm{y}} = 1\)\;
		\(\bar{\bm{y}} = \bm{y}^k + \eta_{\bm{y}} \bm{g}^k\)\;
		\While{\(f_{\text{RP}}(\bar{\bm{y}},\bm{p}^k) > \max\limits_{j\in\mathcal{J}} f_{\text{RP}}(\bm{y}^{k-j}, \bm{p}^{k-j}) + \beta \eta_{\bm{y}} (\bm{g}^k)^\top \nabla_{\bm{y}} f_{\text{RP}}(\bm{y}^k,\bm{p}^k) \)}{
		\(\eta_{\bm{y}} = \eta_{\bm{y}} \tau\)\;
        \(\bar{\bm{y}} = \bm{y}^k + \eta_{\bm{y}} \bm{g}^k\)\;
        }
        \(\bm{y}^{k+1} = \bar{\bm{y}}\)\;
		\(\bm{h}^k = \text{\large $\Pi$}_{\mathcal{U}_{\bm{p}}}\Big(\bm{p}^{k} + \gamma_k \nabla_{\bm{p}} f_{\text{RP}}(\bm{y}^k,\bm{p}^k)\Big) - \bm{p}^k\)\;      
        \(\eta_{\bm{p}} = 1\)\;
		\(\bar{\bm{p}} = \bm{p}^k + \eta_{\bm{p}} \bm{h}^k\)\;
		\While{\(f_{\text{RP}}(\bm{y}^k,\bar{\bm{p}}) < \min\limits_{j\in\mathcal{J}} f_{\text{RP}}(\bm{y}^{k-j}, \bm{p}^{k-j}) + \beta \eta_{\bm{p}} (\bm{h}^k)^\top \nabla_{\bm{p}} f_{\text{RP}}(\bm{y}^k,\bm{p}^k) \)}{
		\(\eta_{\bm{p}} = \tau\eta_{\bm{p}}\)\;
        \(\bar{\bm{p}} = \bm{p}^k + \eta_{\bm{p}} \bm{h}^k\)\;
        }
        \(\bm{p}^{k+1} = \bar{\bm{p}}\)\;
        
        \If{\(k \geq 1\)}{
            \(\color{rev}\displaystyle \varepsilon = \frac{\| \bm{p}^{k+1} - \bm{p}^k \|_2}{\| \bm{p}^k \|_2}\)\;
        }
        \(k = k+1\)\;
    }
    Find the optimal portfolio: \(\bm{x}^* = \bm{x}^{\text{RP}}(\bm{p}^k)\)\;
\KwOut{Optimal DRRP portfolio \(\bm{x}^*\)}
\caption{PGDA for DRRP portfolio optimization}
\label{algo:DRORP}
\end{algorithm}

\subsection{Sequential convex programming with projected gradient ascent}\label{sec:algoGA}

Our discussion of the PGDA algorithm served two purposes. First, it provided a straightforward approach to solve a convex--concave minimax problem. More importantly, it showed the steps required to navigate such a problem and highlighted some structural weaknesses. In particular, the PGDA algorithm requires that we determine two appropriate step sizes, \(\alpha_k\) and \(\gamma_k\), during each iteration. Moreover, the set \(\mathbb{R}_+^n\) in which \(\bm{y}\) exists is not compact. Therefore, the PGDA algorithm necessitates careful initialization and, in general, may be prone to diverge. 

The PGDA has three structural weaknesses. First, the design of the risk parity problem in \eqref{eq:minRP} means that the descent step in the PGDA algorithm must ignore the budget equality constraint. In other words, the algorithm operates in the unbounded set \(\mathbb{R}_+^n\) instead of the compact set \(\mathcal{X}\). Only after convergence of the PGDA algorithm do we project our solution onto \(\mathcal{X}\). 

Second, the PGDA algorithm is twice as susceptible the problem of vanishing gradients. As we approach a saddle point, the gradient information in both directions starts to vanish, slowing the convergence of the algorithm to an optimal saddle point. 

The third and final weakness is the burden placed on the user to define an initial step size in both \(\bm{y}\) and \(\bm{p}\) directions, as well as the initial guess \(\bm{y}^0\). Since the proxy variable \(\bm{y}\) does not have an upper bound, an improperly sized \(\bm{y}^0\) may slow down convergence. 

These three weaknesses can be remediated by redesigning the algorithm to operate directly on the set of admissible portfolios \(\mathcal{X}\). The strict convexity of the nominal risk parity problem in \eqref{eq:minRP} means that there exists a unique risk parity portfolio \(\bm{x}^{\text{RP}}(\bm{p})\) for every \(\bm{p}\in\mathcal{U}_{\bm{p}}\). Assume we have an ascent algorithm and let \(\bm{p}^k\) be the \(k^{\text{th}}\) iteration of our adversarial probability. Then, for every \(k=0,1,\dots\), there exists a corresponding risk parity portfolio \(\bm{x}^{\text{RP}}(\bm{p}^k)\). {\color{rev}The same holds for the proxy variable \(\bm{y}\), i.e., there exists \(\bm{y}^k = \bm{y}^{\text{RP}}(\bm{p}^k)\) for every iteration \(k=0,1,\dots\).} Thus, we can formulate an algorithm that ascends in \(\bm{p}\in\mathcal{U}_{\bm{p}}\) while enforcing the risk parity condition in {\color{rev}\(\bm{y}\in\mathbb{R}_+^n\)} after every iteration. 

Conversely, we can interpret this algorithm as solving a sequence of convex problems. Specifically, we solve the {\color{rev}proxy risk parity problem \(\bm{y}^k = \bm{y}^{\text{RP}}(\bm{p}^k)\)}, where we update the covariance matrix \(\hat{\bm{\Sigma}}(\bm{p}^k)\) after every iteration \(k\). Thus, the resulting algorithm needs only to ascend in \(\bm{p}\in\mathcal{U}_{\bm{p}}\), meaning it can be solved using PGA. In turn, this means that the user no longer needs to define any of the initial conditions and updates associated with \(\bm{y}\). Given that the proposed PGA algorithm involves iteratively solving a sequence of convex problems, we refer to it as the `SCP--PGA' algorithm. 

{\color{rev}By definition, \(\bm{y}^k = \bm{y}^{\text{RP}}(\bm{p}^k)\) is the minimizer of \(f_{\text{RP}}\) for any fixed \(\bm{p}\in\mathcal{U}_{\bm{p}}\). Thus, if \((\bm{y}^*,\bm{p}^*)\) is the saddle point of \(f_{\text{RP}}\), the saddle-point inequality in \eqref{eq:minimaxineq} can be restated as the following theorem.

\begin{lemma}
\label{lemma:ineq}
	If the function \(f_{\text{RP}}: \mathbb{R}_+^n\times\mathcal{U}_{\bm{p}} \rightarrow \mathbb{R}\) is convex--concave and \(f_{\text{RP}}(\cdot,\bm{p}):\mathbb{R}_+^n\rightarrow\mathbb{R}\) is strictly convex, then the saddle-point inequality in \eqref{eq:minimaxineq} is equivalent to 
\[
	f_{\text{RP}}\big(\bm{y}^{\text{RP}}(\bm{p}),\bm{p}\big) \leq f_{\text{RP}}\big(\bm{y}^{\text{RP}}(\bm{p}^*),\bm{p}^*\big)\ \  \forall\ \bm{p}\in\mathcal{U}_{\bm{p}}.
\]
\end{lemma}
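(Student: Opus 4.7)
The plan is to prove the equivalence via two implications, using Lemma \ref{lemma:RP}'s characterization of $\bm{y}^{\text{RP}}(\bm{p})$ as the unique minimizer of $f_{\text{RP}}(\cdot, \bm{p})$ to translate between the two-sided saddle-point inequality and its one-sided counterpart in $\bm{p}$. The forward direction reduces to a short chain of inequalities; the reverse direction requires invoking Theorem \ref{thm:minimax} together with strict convexity.

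For the forward direction, I would assume \eqref{eq:minimaxineq} holds at $(\bm{y}^*, \bm{p}^*)$. The right half asserts $\bm{y}^* \in \argmin_{\bm{y}\in\mathbb{R}_+^n} f_{\text{RP}}(\bm{y}, \bm{p}^*)$, so strict convexity identifies $\bm{y}^* = \bm{y}^{\text{RP}}(\bm{p}^*)$. For any $\bm{p} \in \mathcal{U}_{\bm{p}}$, I would then chain
\[
    f_{\text{RP}}\big(\bm{y}^{\text{RP}}(\bm{p}), \bm{p}\big) \leq f_{\text{RP}}\big(\bm{y}^{\text{RP}}(\bm{p}^*), \bm{p}\big) \leq f_{\text{RP}}\big(\bm{y}^{\text{RP}}(\bm{p}^*), \bm{p}^*\big),
\]
where the first inequality uses the optimality of $\bm{y}^{\text{RP}}(\bm{p})$ for fixed $\bm{p}$, and the second is the left half of \eqref{eq:minimaxineq} rewritten under the identification $\bm{y}^* = \bm{y}^{\text{RP}}(\bm{p}^*)$.

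For the reverse direction, suppose the new inequality holds at some $\bm{p}^* \in \mathcal{U}_{\bm{p}}$, and set $\bm{y}^* := \bm{y}^{\text{RP}}(\bm{p}^*)$; the right half of \eqref{eq:minimaxineq} is then immediate from the definition of $\bm{y}^*$. Writing $\phi(\bm{p}) := f_{\text{RP}}\big(\bm{y}^{\text{RP}}(\bm{p}), \bm{p}\big) = \min_{\bm{y}} f_{\text{RP}}(\bm{y}, \bm{p})$, Theorem \ref{thm:minimax} yields the common saddle value $v := \min_{\bm{y}} \max_{\bm{p}} f_{\text{RP}}(\bm{y},\bm{p}) = \max_{\bm{p}} \phi(\bm{p}) = \phi(\bm{p}^*)$. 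Let $\bar{\bm{y}}$ attain the outer minimum. The sandwich
\[
    v = \max_{\bm{p}\in\mathcal{U}_{\bm{p}}} f_{\text{RP}}(\bar{\bm{y}}, \bm{p}) \geq f_{\text{RP}}(\bar{\bm{y}}, \bm{p}^*) \geq \min_{\bm{y}\in\mathbb{R}_+^n} f_{\text{RP}}(\bm{y}, \bm{p}^*) = \phi(\bm{p}^*) = v
\]
collapses to equality throughout, so $\bar{\bm{y}}$ minimizes $f_{\text{RP}}(\cdot, \bm{p}^*)$. Strict convexity then forces $\bar{\bm{y}} = \bm{y}^{\text{RP}}(\bm{p}^*) = \bm{y}^*$, so $\max_{\bm{p}} f_{\text{RP}}(\bm{y}^*, \bm{p}) = v = f_{\text{RP}}(\bm{y}^*, \bm{p}^*)$, which delivers $f_{\text{RP}}(\bm{y}^*, \bm{p}) \leq f_{\text{RP}}(\bm{y}^*, \bm{p}^*)$ for all $\bm{p} \in \mathcal{U}_{\bm{p}}$.

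The main obstacle is the reverse direction: the $\phi$-based characterization constrains only the min-projected value of $f_{\text{RP}}$, so one cannot control $f_{\text{RP}}(\bm{y}^*, \bm{p})$ for $\bm{p} \neq \bm{p}^*$ via direct chaining. Bridging that gap requires both the minimax identity, to produce an a priori unrelated minimizer $\bar{\bm{y}}$ of $\max_{\bm{p}} f_{\text{RP}}(\cdot, \bm{p})$, and the strict convexity hypothesis, to collapse $\bar{\bm{y}}$ onto $\bm{y}^*$. Without strict convexity, multiple $\bm{y}$ could realize the saddle value at $\bm{p}^*$ and the clean one-sided characterization in the lemma would no longer be available.
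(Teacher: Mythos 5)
Your forward direction is essentially the paper's own argument: the paper defines $\phi(\bm{p}) \triangleq \min_{\bm{y}\in\mathbb{R}_+^n} f_{\text{RP}}(\bm{y},\bm{p})$, identifies $\bm{y}^* = \bm{y}^{\text{RP}}(\bm{p}^*)$ via strict convexity, and chains $\phi(\bm{p}) \leq f_{\text{RP}}(\bm{y}^*,\bm{p}) \leq f_{\text{RP}}(\bm{y}^*,\bm{p}^*) = \phi(\bm{p}^*)$, which is exactly your two-step chain written through $\phi$. Where you genuinely diverge is that you also prove the converse implication, which the paper does not: despite the lemma asserting that the two inequalities are \emph{equivalent}, the paper's proof only establishes that the saddle-point inequality \eqref{eq:minimaxineq} implies the one-sided inequality in $\bm{p}$. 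Your reverse argument --- taking $\bm{p}^*$ to be a maximizer of $\phi$, invoking Theorem \ref{thm:minimax} to equate $\phi(\bm{p}^*)$ with the saddle value $v$, sandwiching $f_{\text{RP}}(\bar{\bm{y}},\bm{p}^*)$ between $v$ and $\min_{\bm{y}} f_{\text{RP}}(\bm{y},\bm{p}^*)$ to force equality, and then using strict convexity to collapse $\bar{\bm{y}}$ onto $\bm{y}^{\text{RP}}(\bm{p}^*)$ --- is sound at the paper's level of rigor (it implicitly assumes, as the paper does throughout, that the outer minimum over the non-compact set $\mathbb{R}_+^n$ is attained). So your proposal is correct and, for the claimed biconditional, actually more complete than the published proof; the extra cost is the explicit appeal to the minimax identity, which the forward direction alone never needs.
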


\begin{proof}
	Let \(\phi(\bm{p}) \triangleq \min_{y\in\mathbb{R}_+^n} f_{\text{RP}}(\bm{y},\bm{p})\). By Lemma \ref{lemma:RP}, \(f_{\text{RP}}(\cdot,\bm{p}):\mathbb{R}_+^n\rightarrow\mathbb{R}\) is strictly convex \(\forall\ \bm{p}\in\mathcal{U}_{\bm{p}}\). Thus, for fixed \(\hat{\bm{p}}\), \(\phi(\hat{\bm{p}})\) is equivalent to the risk parity optimization problem in \eqref{eq:minRPopt} with the unique optimal solution \(\bm{y}^{\text{RP}}(\hat{\bm{p}})\). More generally, this means \(\phi(\bm{p}) = f_{\text{RP}}\big(\bm{y}^{\text{RP}}(\bm{p}), \bm{p}\big)\ \forall\ \bm{p}\). 
	
	By definition, we have that \(\phi(\bm{p}) \leq f_{\text{RP}}(\hat{\bm{y}},\bm{p})\) for any fixed \(\hat{\bm{y}}\in\mathbb{R}_+^n\). Let \((\bm{y}^*, \bm{p}^*)\) be the saddle point of \(f_{\text{RP}}\). Again, imposing the risk parity condition in \eqref{eq:minRPopt}, we have that \((\bm{y}^*, \bm{p}^*) = \big(\bm{y}^{\text{RP}}(\bm{p}^*), \bm{p}^*\big)\). It follows that
\[
	\phi(\bm{p}) \leq f_{\text{RP}}(\bm{y}^*,\bm{p}).
\]

	Moreover, the first part of the saddle-point inequality in \eqref{eq:minimaxineq} establishes that \(f_{\text{RP}}(\bm{y}^*,\bm{p}) \leq f_{\text{RP}}(\bm{y}^*,\bm{p}^*)\ \forall\ \bm{y}\in\mathbb{R}_+^n,\ \bm{p}\in\mathcal{U}_{\bm{p}}\). Combining this with the inequality above, we have that
\[
	f_{\text{RP}}\big(\bm{y}^{\text{RP}}(\bm{p}), \bm{p}\big) = \phi(\bm{p}) \leq f_{\text{RP}}(\bm{y}^*,\bm{p}) \leq f_{\text{RP}}(\bm{y}^*,\bm{p}^*) = f_{\text{RP}}\big(\bm{y}^{\text{RP}}(\bm{p}^*),\bm{p}^*\big)
\]
for all \(\bm{p}\in\mathcal{U}_{\bm{p}}\), as desired. 
\end{proof}

The SCP--PGA algorithm exploits the correspondence between \(\bm{p}\) and \(\bm{y}\) by enforcing the risk parity condition during every iteration \(k\), i.e., we have \(\bm{y}^k = \bm{y}^{\text{RP}}(\bm{p}^k)\). By Lemma \ref{lemma:ineq}, we can use PGA to maximize the curve of minima, \(f_{\text{RP}}\big(\bm{y}^{\text{RP}}(\cdot),\cdot\big):\mathcal{U}_{\bm{p}}\rightarrow\mathbb{R}\), while maintaining the risk parity condition during every iteration. Visually, this can be interpreted as iteratively ascending on the `curve of minima' \cite{bertsekas2003convex} shown in Figure \ref{fig:Saddlepoint}. As the SCP--PGA algorithm converges to \(\bm{p}^*\), we find the optimal DRRP portfolio by projecting \(\bm{y}^* = \bm{y}^{\text{RP}}(\bm{p}^*)\) onto \(\mathcal{X}\). In other words, at convergence, the optimal portfolio is \(\bm{x}^* = \text{\large $\Pi$}_{\mathcal{X}}(\bm{y}^*)\).

\begin{figure}[!htbp]
\centering
    \includegraphics[width=0.9\textwidth]{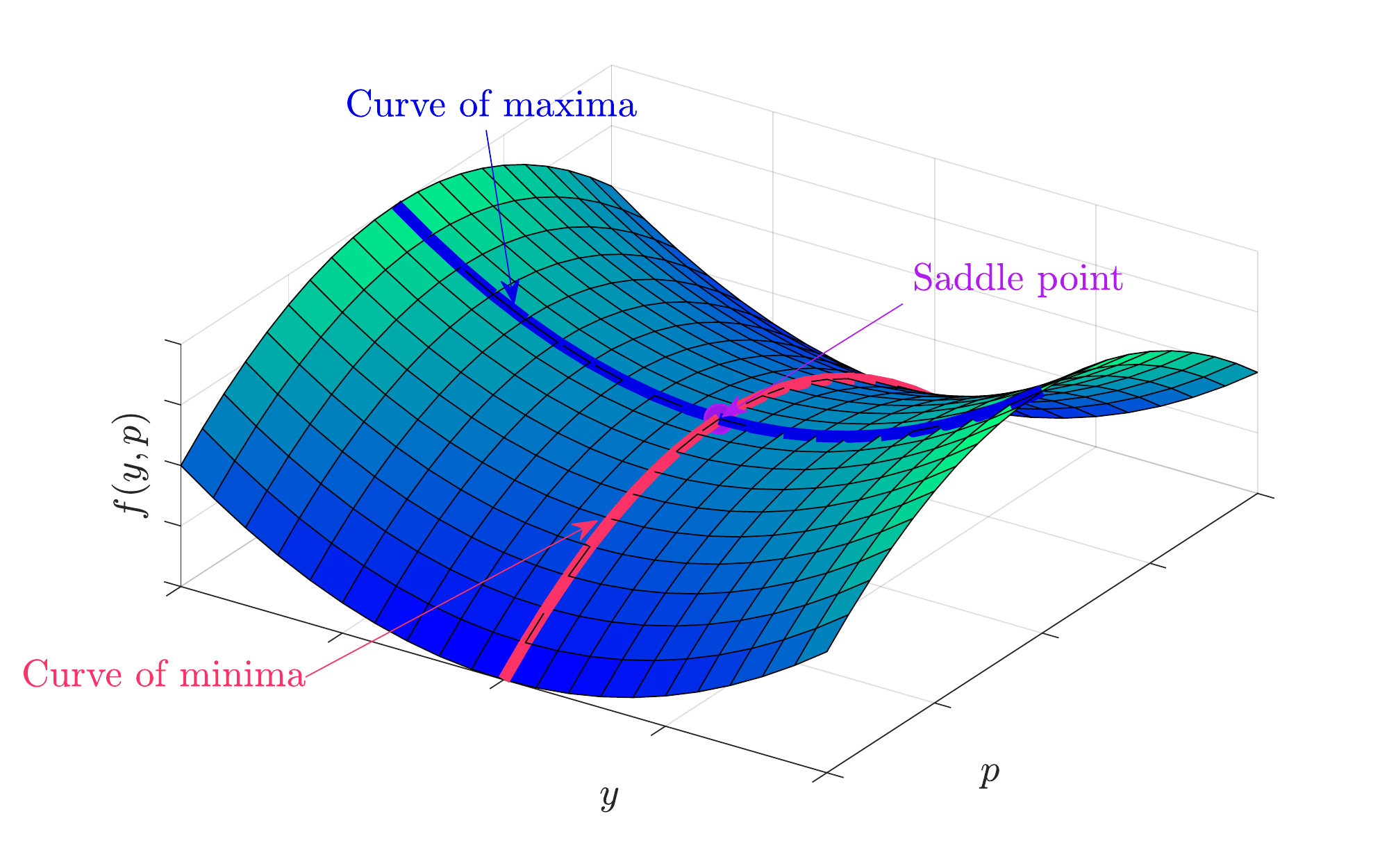}
    \caption{\color{rev} Example of a simple two-dimensional convex--concave function \(f(y,p)\) showing the curve of maxima and the curve of minima. The curve of minima is defined as \(\phi(p) \triangleq \min_{y} f(y,p)\).}
\label{fig:Saddlepoint}
\end{figure}

\begin{theorem}
\label{thm:convergence}
	The function \(f_{\text{RP}}: \mathbb{R}_+^n\times\mathcal{U}_{\bm{p}} \rightarrow \mathbb{R}\) is convex--concave and differentiable over \(\bm{p}\in\mathcal{U}_{\bm{p}}\), and the gradient \(\nabla_{\bm{p}}f_{\text{RP}}(\bm{y},\bm{p})\) is Lipschitz continuous with Lipschitz constant \(L\). Let \((\bm{y}^*,\bm{p}^*)\) be the saddle point of \(f_{\text{RP}}\). 
	
	If we perform a projected gradient ascent in \(\bm{p}\in\mathcal{U}_{\bm{p}}\) with a fixed step size \(\gamma\) from the initial point \(\bm{p}^0\) over \(k\) iterations, then
\[
		0 \leq f_{\text{RP}}\big(\bm{y}^{\text{RP}}(\bm{\bm{p}^*}), \bm{p}^*\big) - \frac{1}{k}\sum_{j=0}^{k-1}f_{\text{RP}}\big(\bm{y}^{\text{RP}}(\bm{p}^j), \bm{p}^j\big) \leq \frac{\| \bm{p}^0 - \bm{p}^* \|_2^2}{2k\gamma} +\frac{\gamma L^2}{2}.
\]
\end{theorem}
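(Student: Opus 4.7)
The plan is to reduce the analysis to the classical convergence theory of projected gradient ascent applied to a concave envelope. Define the curve of minima
\[
	\phi(\bm{p}) \triangleq f_{\text{RP}}\big(\bm{y}^{\text{RP}}(\bm{p}), \bm{p}\big) = \min_{\bm{y}\in\mathbb{R}_+^n} f_{\text{RP}}(\bm{y},\bm{p}),
\]
exactly as introduced in the proof of Lemma \ref{lemma:ineq}. First I would establish that $\phi$ is concave on $\mathcal{U}_{\bm{p}}$: since $f_{\text{RP}}(\bm{y},\cdot)$ is concave for every fixed $\bm{y}\in\mathbb{R}_+^n$, the pointwise infimum of this family of concave functions is itself concave. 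Second, Lemma \ref{lemma:RP} guarantees that $\bm{y}^{\text{RP}}(\bm{p})$ is the \emph{unique} minimizer of $f_{\text{RP}}(\cdot,\bm{p})$ for every $\bm{p}$, so Danskin's envelope theorem applies and yields
\[
	\nabla\phi(\bm{p}) = \nabla_{\bm{p}} f_{\text{RP}}\big(\bm{y}^{\text{RP}}(\bm{p}),\bm{p}\big),
\]
which is precisely the ascent direction computed by SCP--PGA at iteration $k$. Hence the algorithm is a standard projected gradient ascent applied to the concave function $\phi$ over the convex set $\mathcal{U}_{\bm{p}}$.

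With this identification in place, I would run the classical single-step recursion. Setting $\bm{g}^j \triangleq \nabla_{\bm{p}} f_{\text{RP}}(\bm{y}^{\text{RP}}(\bm{p}^j),\bm{p}^j) = \nabla\phi(\bm{p}^j)$ and invoking the non-expansiveness of the Euclidean projection onto the convex set $\mathcal{U}_{\bm{p}}$ (applied to $\bm{p}^*\in\mathcal{U}_{\bm{p}}$), one obtains
\[
	\|\bm{p}^{j+1} - \bm{p}^*\|_2^2 \leq \|\bm{p}^j - \bm{p}^*\|_2^2 + 2\gamma (\bm{g}^j)^\top(\bm{p}^j - \bm{p}^*) + \gamma^2 \|\bm{g}^j\|_2^2.
\]
Concavity of $\phi$ gives $\phi(\bm{p}^*) - \phi(\bm{p}^j) \leq (\bm{g}^j)^\top(\bm{p}^* - \bm{p}^j)$, so the cross term is bounded above by $2\gamma(\phi(\bm{p}^j) - \phi(\bm{p}^*))$. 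Rearranging and telescoping from $j=0$ to $k-1$ yields
\[
	2\gamma \sum_{j=0}^{k-1} \big(\phi(\bm{p}^*) - \phi(\bm{p}^j)\big) \leq \|\bm{p}^0 - \bm{p}^*\|_2^2 + \gamma^2 \sum_{j=0}^{k-1} \|\bm{g}^j\|_2^2.
\]
Dividing by $2\gamma k$ and bounding each $\|\bm{g}^j\|_2^2$ by $L^2$ produces the announced upper bound. The lower bound of zero follows immediately from $\phi(\bm{p}^*) \geq \phi(\bm{p}^j)$ for every $j$ by definition of the maximizer $\bm{p}^*$.

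The main technical point will be reconciling the Lipschitz hypothesis on $\nabla_{\bm{p}} f_{\text{RP}}$ with the $\gamma L^2/2$ term in the conclusion. The classical subgradient-style bound requires a uniform bound $\|\nabla_{\bm{p}} f_{\text{RP}}\|_2 \leq L$ on the gradient norm, which is slightly stronger than Lipschitz continuity of the gradient. However, since $\mathcal{U}_{\bm{p}}$ is compact and the map $\bm{p}\mapsto \bm{y}^{\text{RP}}(\bm{p})$ is continuous (by the strict convexity from Lemma \ref{lemma:RP} together with a Berge-type argument exploiting the quadratic-plus-barrier coercivity of $f_{\text{RP}}(\cdot,\bm{p})$), the gradient is bounded along the iterates and $L$ can be interpreted as that uniform bound. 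Once this reinterpretation of $L$ is fixed, the remaining steps of the proof are standard.
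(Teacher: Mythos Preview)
Your proposal is correct and follows essentially the same route as the paper: the non-expansiveness inequality for the projection, the concavity bound on the cross term, the gradient-norm bound by \(L\), and the telescoping sum are all identical to the paper's argument, and both of you flag the same mismatch between ``\(\nabla_{\bm{p}}f_{\text{RP}}\) is \(L\)-Lipschitz'' and the bound \(\|\nabla_{\bm{p}}f_{\text{RP}}\|_2\le L\) actually used.

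The one stylistic difference worth noting is that you invoke Danskin's theorem to identify \(\nabla\phi(\bm{p}^j)=\nabla_{\bm{p}}f_{\text{RP}}(\bm{y}^{\text{RP}}(\bm{p}^j),\bm{p}^j)\) and then apply concavity of \(\phi\) directly, whereas the paper avoids differentiability of \(\phi\) altogether: it uses concavity of \(f_{\text{RP}}(\bm{y}^j,\cdot)\) at the \emph{fixed} point \(\bm{y}^j\) to get \(\nabla_{\bm{p}}f_{\text{RP}}(\bm{y}^j,\bm{p}^j)^\top(\bm{p}^j-\bm{p}^*)\le f_{\text{RP}}(\bm{y}^j,\bm{p}^j)-f_{\text{RP}}(\bm{y}^j,\bm{p}^*)\), and then passes to \(\phi\) via the elementary inequalities \(f_{\text{RP}}(\bm{y}^j,\bm{p}^j)=\phi(\bm{p}^j)\) and \(f_{\text{RP}}(\bm{y}^j,\bm{p}^*)\ge\phi(\bm{p}^*)\). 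Your route is cleaner conceptually (SCP--PGA \emph{is} PGA on \(\phi\)); the paper's route is slightly more elementary since it never needs \(\phi\) to be differentiable. Either way, the resulting telescoping inequality is the same.
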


\begin{proof}
	This proof is similar to Proposition 3.1 in \cite{nedic2009subgradient}. Recall the projected gradient ascent step introduced in \eqref{eq:projStep} and \eqref{eq:gradStep}. Assume we take a greedy step with \(\eta_{\bm{p}} = 1\) and with a fixed step size \(\gamma\). Let the iteration counter be \(j = 0,1,\dots\). Then, we have that
\begin{align}
	\|\bm{p}^{j+1} - \bm{p}^*\|_2^2 &= \|\bm{p}^j + \eta_{\bm{p}} \bm{h}^j - \bm{p}^*\|_2^2\nonumber \\ 
		&= \big\|\text{\large $\Pi$}_{\mathcal{U}_{\bm{p}}}\big(\bm{p}^{j} + \gamma \nabla_{\bm{p}} f_{\text{RP}}(\bm{y}^j,\bm{p}^j) \big) - \bm{p}^*\big\|_2^2\nonumber\\
	\label{eq:proofineq}	&\leq \| \bm{p}^j + \gamma \nabla_{\bm{p}} f_{\text{RP}}(\bm{y}^j,\bm{p}^j) - \bm{p}^*\|_2^2\\
		&= \| \bm{p}^j - \bm{p}^* \|_2^2 + 2\gamma \nabla_{\bm{p}} f_{\text{RP}}(\bm{y}^j,\bm{p}^j)^\top (\bm{p}^{j} - \bm{p}^*) + \gamma^2 \| \nabla_{\bm{p}} f_{\text{RP}}(\bm{y}^j,\bm{p}^j) \|_2^2.\nonumber
\end{align}
	where the inequality in \eqref{eq:proofineq} arises due to the non-expansive property of the projection operator.\footnote{\color{rev} For this inequality, we have also taken advantage of the fact that \(\bm{p}^* \in\mathcal{U}_{\bm{p}}\), i.e.,  \(\text{\large $\Pi$}_{\mathcal{U}_{\bm{p}}}\big(\bm{p}^*) = \bm{p}^*\).} Since \(f_{\text{RP}}\) is concave for fixed \(\bm{y}^j\), its first-order Taylor expansion between the points \(\bm{p}^*\) and \(\bm{p}^j\) is
\[
	f_{\text{RP}}(\bm{y}^j,\bm{p}^*) \leq f_{\text{RP}}(\bm{y}^j,\bm{p}^j) + \nabla_{\bm{p}} f_{\text{RP}}(\bm{y}^j,\bm{p}^j)^\top (\bm{p}^* - \bm{p}^j)
\]
	or, equivalently,
\[
	\nabla_{\bm{p}}f_{\text{RP}}(\bm{y}^j,\bm{p}^j)^\top (\bm{p}^j - \bm{p}^*) \leq f_{\text{RP}}(\bm{y}^j,\bm{p}^j) -	f_{\text{RP}}(\bm{y}^j,\bm{p}^*)
\]
	which, in turn, allows us to express the initial inequality as
\[
	\|\bm{p}^{j+1} - \bm{p}^*\|_2^2 \leq \| \bm{p}^j - \bm{p}^* \|_2^2 + 2\gamma \big( f_{\text{RP}}(\bm{y}^j,\bm{p}^j) - f_{\text{RP}}(\bm{y}^j,\bm{p}^*) \big) + \gamma^2 \| \nabla_{\bm{p}} f_{\text{RP}}(\bm{y}^j,\bm{p}^j) \|_2^2.
\]
	The objective function \(f_{\text{RP}}\) is twice differentiable in \(\bm{p}\) and has a negative semi-definite Hessian for all \(\bm{p}\in\mathcal{U}_{\bm{p}}\). Thus, it follows that \(\nabla_{\bm{p}}f_{\text{RP}}(\bm{y},\bm{p})\) is Lipschitz continuous with some Lipschitz constant \(L\), which implies \(\|\nabla_{\bm{p}}f_{\text{RP}}(\bm{y}^j,\bm{p}^j)\|_2 \leq L\ \forall\ j\geq 0\). In turn, this means
\[
	\|\bm{p}^{j+1} - \bm{p}^*\|_2^2 \leq \| \bm{p}^j - \bm{p}^* \|_2^2 + 2\gamma \big( f_{\text{RP}}(\bm{y}^j,\bm{p}^j) - f_{\text{RP}}(\bm{y}^j,\bm{p}^*) \big) + \gamma^2 L^2.
\]
	By design of the SCP--PGA algorithm, we impose the condition \(\bm{y}^j \triangleq \bm{y}^{\text{RP}}(\bm{p}^j)\) for every iteration \(j\geq 0\). Thus, by definition, we have that \(\phi(\bm{p}^j) = \min_{y\in\mathbb{R}_+^n} f_{\text{RP}}(\bm{y},\bm{p}^j) = f_{\text{RP}}(\bm{y}^j,\bm{p}^j)\). In turn, by the definition of \(\phi(\bm{p})\), we also have that \(\phi(\bm{p}^*) \leq f_{\text{RP}}(\bm{y}^j,\bm{p}^*)\). Therefore,
\begin{align}
	\frac{1}{2\gamma} \big( \|\bm{p}^{j+1} - \bm{p}^*\|_2^2 - \| \bm{p}^j - \bm{p}^* \|_2^2 \big) - \frac{\gamma L^2}{2} &\leq \big( f_{\text{RP}}(\bm{y}^j,\bm{p}^j) - f_{\text{RP}}(\bm{y}^j,\bm{p}^*) \big)\nonumber\\
		&\leq \phi(\bm{p}^j) - \phi(\bm{p}^*).\nonumber
\end{align}
	Next, we can sum the inequality above over itself for \(j=0,\dots,k-1\), which leads to
\[
	\frac{1}{2\gamma} \big( \|\bm{p}^k - \bm{p}^*\|_2^2 - \| \bm{p}^0 - \bm{p}^* \|_2^2 \big) - \frac{k\gamma L^2}{2} \leq \sum_{j=0}^{k-1}\phi(\bm{p}^j) - k \phi(\bm{p}^*).
\]
	Since \(\|\bm{p}^k - \bm{p}^*\|_2^2 \geq 0\), we can discard this term from the inequality above, restating the inequality it as
\[
	-\frac{\| \bm{p}^0 - \bm{p}^* \|_2^2}{2k\gamma} - \frac{\gamma L^2}{2} \leq \frac{1}{k} \sum_{j=0}^{k-1}\phi(\bm{p}^j) - \phi(\bm{p}^*).
\]
	Finally, by Lemma \ref{lemma:ineq}, we have that \(\phi(\bm{p}^*)\geq \phi(\bm{p})\). Therefore,
\[
	0\leq \phi(\bm{p}^*) - \frac{1}{k} \sum_{j=0}^{k-1}\phi(\bm{p}^j) \leq \frac{\| \bm{p}^0 - \bm{p}^* \|_2^2}{2k\gamma} + \frac{\gamma L^2}{2}.
\]
Since \(\phi(\bm{p}) = f_{\text{RP}}\big(\bm{y}^{\text{RP}}(\bm{\bm{p}}), \bm{p}\big)\ \forall\ \bm{p}\in\mathcal{U}_{\bm{p}}\), we recover the desired result
\[
		0 \leq f_{\text{RP}}\big(\bm{y}^{\text{RP}}(\bm{\bm{p}^*}), \bm{p}^*\big) - \frac{1}{k}\sum_{j=0}^{k-1}f_{\text{RP}}\big(\bm{y}^{\text{RP}}(\bm{p}^j), \bm{p}^j\big) \leq \frac{\| \bm{p}^0 - \bm{p}^* \|_2^2}{2k\gamma} +\frac{\gamma L^2}{2}.
\]
\end{proof}

By Theorem \ref{thm:convergence}, we can see how the SCP--PGA algorithm converges towards the optimal solution \(f_{\text{RP}}\big(\bm{y}^{\text{RP}}(\bm{\bm{p}^*}), \bm{p}^*\big)\) at a rate of \(1/k\) with an error level \(\gamma L^2/2\). Convergence is determined by the number of iterations \(k\), the step size \(\gamma\) and the tightness of the bound given by the Lipschitz constant \(L\). 

Implementing an adaptive diminishing step size \(\gamma_k\) while repeatedly iterating (\(k\rightarrow \infty\)) means the algorithm should (slowly) converge. In our case, the maximization problem is quadratic concave over a compact convex set. Moreover, we retain the Barzilai--Borwein step size \(\gamma_k\) from \eqref{eq:gamma} with the GLL line search method.
}

{\color{rev}Enforcing the risk parity condition \(\bm{y}^k = \bm{y}^{\text{RP}}(\bm{p}^k)\) in every iteration \(k\)} increases the computational cost per iteration when compared against the PGDA algorithm. {\color{rev}However,} we note that convex optimization problems can be efficiently solved by modern optimization algorithms and software packages. Thus, as we will show numerically in Section \ref{sec:Exp}, the additional computational cost per iteration is almost negligible. Moreover, the SCP--PGA algorithm needs less iterations until convergence.

The SCP--PGA algorithm follows the same logic as the PGDA algorithm, except we are only concerned with the ascent step. Since our maximization problem is quadratic concave over a compact convex set, using an appropriate line search can speed up convergence. Specifically, the use of the GLL line search in our algorithm means that, by design, each iteration achieves a sufficient increase in {\color{rev}\(f_{\text{RP}}\big(\bm{y}^{\text{RP}}(\bm{p}),\bm{p}\big)\)} such that we converge to the global maximum. {\color{rev}We can subsequently find the corresponding DRRP optimal portfolio by projecting \(\bm{y}^{\text{RP}}(\bm{p}^*)\) onto \(\mathcal{X}\), or, equivalently, \(\bm{x}^* = \bm{x}^{\text{RP}}(\bm{p}^*)\)}. We complete this subsection by presenting the SCP--PGA algorithm in Algorithm \ref{algo:DRExact}. 

\begin{algorithm}
	\KwIn{Data \(\hat{\bm{\xi}}\in\mathbb{R}^{n\times T}\); {\color{rev}Degree of robustness} \(\omega\in (0,1)\);  Distance measure \{JS, Hellinger, TV\}; Nominal distribution \(\bm{q}\in\mathcal{P}\); Risk parity constant \(\kappa > 0\); Initial step size \(\gamma_0 > 0\); Convergence tolerance \(\varepsilon_0\); Search control parameters \(\beta, \tau\in (0,1)\); GLL parameter \(m\geq 1\)}
	Find the distance limit \(d(\omega,T)\) as shown in either of (\ref{eq:limitJS}--\ref{eq:limitTV})\;
	Initialize the adversarial distribution \(\bm{p}^0 = \bm{q}\)\;
    Initialize the convergence measure: \(\varepsilon >> 1\)\;
	Initialize the counter: \(k = 0\)\;
	\While{\(\varepsilon > \varepsilon_0\)}{  
		\If{\(k \geq 1\)}{
            Update \(\gamma_k\) as shown in \eqref{eq:gamma}\;
        }
		\(\bm{y}^k = \bm{y}^{\text{RP}}(\bm{p}^k)\)\;
		\(\bm{h}^k = \text{\large $\Pi$}_{\mathcal{U}_{\bm{p}}}\Big(\bm{p}^{k} + \gamma_k \nabla_{\bm{p}} f_{\text{RP}}(\bm{y}^k,\bm{p}^k) \Big) - \bm{p}^k\) \label{DRExact:direction}\;      
        \(\eta_{\bm{p}} = 1\)\;
		\(\bar{\bm{p}} = \bm{p}^k + \eta_{\bm{p}} \bm{h}^k\)\;
		\While{\(f_{\text{RP}}(\bm{y}^k,\bar{\bm{p}}) < \min\limits_{j\in\mathcal{J}} f_{\text{RP}}(\bm{y}^{k-j}, \bm{p}^{k-j}) + \beta \eta_{\bm{p}} (\bm{h}^k)^\top \nabla_{\bm{p}} f_{\text{RP}}(\bm{y}^k,\bm{p}^k) \) \label{DRExact:GLL}}{
		\(\eta_{\bm{p}} = \eta_{\bm{p}}\tau \)\;
        \(\bar{\bm{p}} = \bm{p}^k + \eta_{\bm{p}} \bm{h}^k\)\;
        }
        \(\bm{p}^{k+1} = \bar{\bm{p}}\)\;
        
        \If{\(k \geq 1\)}{
            \(\color{rev}\displaystyle \varepsilon = \frac{\| \bm{p}^{k+1} - \bm{p}^k \|_2}{\| \bm{p}^k \|_2}\)\;
        }
        \(k = k+1\)\;
    }
    Find the optimal portfolio: \(\bm{x}^* = \bm{x}^{\text{RP}}(\bm{p}^k)\)\;
\KwOut{Optimal DRRP portfolio \(\bm{x}^*\)}
\caption{SCP--PGA for DRRP portfolio optimization}
\label{algo:DRExact}
\end{algorithm}

\section{Numerical experiments} \label{sec:Exp}

This section consists of three separate experiments. {\color{rev}The first experiment evaluates the computational performance of the SCP--PGA algorithm (Algorithm \ref{algo:DRExact}) against both the robust counterpart in \eqref{eq:singleConvex} and the PGDA algorithm (Algorithm \ref{algo:DRORP}). This numerical performance experiment is conducted using synthetically-generated data, allowing us to test increasingly larger datasets.} 

The second experiment assesses the in-sample performance of the DRRP portfolio in a financial context and uses historical data. The third experiment tests the out-of-sample financial performance of the DRRP portfolio. The second and third experiments share the same data. The data consist of historical observations ranging from the start of 1998 until the end of 2016 for 30 industry portfolios. These industry portfolios serve as our financial assets and are akin to many popular exchange traded funds. The data were obtained from Kenneth R. French's data library \cite{French2020Data}. Table \ref{table:assets} lists the 30 industry portfolios.

\begin{table}[!ht]
\footnotesize
\caption{List of assets}
\centering
\begin{tabular}{l l l l} 
\hline 
\rule{0pt}{3ex}Food Products & Tobacco & Beer and Liquor & Recreation\\[1.5ex]
                Household Products & Apparel & Healthcare & Chemicals\\[1.5ex]
                Fabricated Products & Construction & Steel Works & Electrical Equip.\\[1.5ex]
                Aircraft, Ships, Rail Equip. & Mining & Coal & Oil and Gas\\[1.5ex]
                Communication & Services & Business Equip. & Paper\\[1.5ex]
                Restaurants and Hotels & Wholesale & Retail & Financials\\[1.5ex]
                Printing & Textiles & Automobiles & Utilities\\[1.5ex]
                Transportation & Other\\[1.5ex]
\hline
\end{tabular}
\label{table:assets}
\end{table}

All experiments were conducted on an Apple MacBook Pro computer (2.8 GHz Intel Core i7, 16 GB 2133 MHz DDR3 RAM) running macOS `Catalina'. The script was written using the Julia programming language (version 1.4.0) with the modelling language `JuMP' \cite{DunningHuchetteLubin2017} and with IPOPT (version 3.12.6) as the optimization solver.

\subsection{Numerical performance and tractability}\label{sec:NumPerf}

{\color{rev}The numerical performance experiment is conducted in three parts. The first part compares the SCP--PGA algorithm against the robust counterpart. The second part compares the SCP--PGA algorithm against the PGDA algorithm. Finally, the third part provides an extended evaluation of the numerical performance and tractability of the SCP--PGA algorithm.

The first part of this experiment, which compares the SCP--PGA algorithm against the robust counterpart in \eqref{eq:singleConvex}, is conducted as follows. We randomly generate synthetic datasets with \(n=\) 100, 200, 400 assets and \(T= \) 100, 200, 400 scenarios, meaning there are a total of nine different datasets. For each dataset, we find the corresponding DRRP portfolio using both the SCP--PGA algorithm and the robust counterpart. By design, both optimal portfolios should be identical. Thus, we evaluate performance by comparing the runtime of both methods. The tests are repeated twice using two different degrees of robustness, \(\omega=\) 0.25, 0.5. For this part of the experiment, we only use the Hellinger distance to define the ambiguity set \(\mathcal{U}_{\bm{p}}\).}

The remainder of the user-defined parameters are the following. The risk parity constant is set to \(\kappa = 1\), while the convergence tolerance is set to {\color{rev}\(\varepsilon_0 = 10^{-4}\)}. As recommended in \cite{birgin2000nonmonotone}, we set \(m=10\). Moreover, we set the search parameters to \(\beta=10^{-6}\) and \(\tau=0.9\). The initial ascent step size is \(\gamma_0=0.1\). {\color{rev}The results are presented in Table \ref{table:RC}.}

\begin{table}[!ht]
\color{rev}
\footnotesize
\caption{Comparison of numerical performance between the SCP--PGA algorithm (shown as `SCP') from Algorithm \ref{algo:DRExact} and the robust counterpart (RC) for DRRP portfolios based on the Hellinger distance.}
\centering
\begin{tabular}{@{\extracolsep{-1.25pt}}l rr l rr l rr}
\hline 
\rule{0pt}{2.75ex} & \multicolumn{2}{c}{\(n=100\)} && \multicolumn{2}{c}{\(n=200\)} && \multicolumn{2}{c}{\(n=400\)}\\[0.5ex]\cline{2-3} \cline{5-6} \cline{8-9}
\rule{0pt}{2.75ex}& SCP & RC && SCP & RC && SCP & RC\\[0.5ex]
\hline
	\rule{0pt}{2.75ex}\(\omega=0.2\)  & \\[0.5ex]
\hline
	\rule{0pt}{3.25ex}\(\bm{T=100}\) & \\[0.5ex]
	Runtime (s) 	& 0.369 & 11.8 && 0.966 & 25.9 && 3.83 & 207\\[1ex]
\(\| \bm{x}^{(i)} - \bm{x}^{\text{Nom}}\|_2\) & 0.0104 & 0.0104 && 0.0142 & 0.0142 && 0.1 & 0.1\\[1ex]
\(\| \bm{x}^{\text{SCP}} - \bm{x}^{\text{RC}}\|_2\) & 3.64e-6 & && 1.48e-5 & && 9.18e-5\\[1.5ex]
	
	\rule{0pt}{3.25ex}\(\bm{T=200}\) & \\[0.5ex]
	Runtime (s) 	& 0.852 & 19.0 && 2.02 & 109 && 6.84 & 578\\[1ex]
\(\| \bm{x}^{(i)} - \bm{x}^{\text{Nom}}\|_2\) & 0.0187 & 0.0187 && 8.52e-3 & 8.52e-3 && 0.0278 & 0.0278\\[1ex]
\(\| \bm{x}^{\text{SCP}} - \bm{x}^{\text{RC}}\|_2\) & 1.5e-5 & && 8.29e-7 & && 1.17e-5\\[1.5ex]

	\rule{0pt}{3.25ex}\(\bm{T=400}\) & \\[0.5ex]
	Runtime (s) 	& 1.56 & 55.0 && 3.16 & 444 && 12.0 & 1,282\\[1ex]
\(\| \bm{x}^{(i)} - \bm{x}^{\text{Nom}}\|_2\) & 0.0203 & 0.0203 && 0.0404 & 0.0404 && 6.3e-3 & 6.3e-3\\[1ex]
\(\| \bm{x}^{\text{SCP}} - \bm{x}^{\text{RC}}\|_2\) & 3.45e-5 & && 5.54e-6 & && 8.72e-7\\[1.5ex]
	
\hline
	\rule{0pt}{2.75ex}\(\omega=0.4\)  & \\[0.5ex]
\hline
	\rule{0pt}{3.25ex}\(\bm{T=100}\) & \\[0.5ex]
	Runtime (s) 	& 0.866 & 26.9 && 1.47 & 151 && 5.4 & 229\\[1ex]
\(\| \bm{x}^{(i)} - \bm{x}^{\text{Nom}}\|_2\) & 0.0759 & 0.0759 && 0.0255 & 0.0255 && 0.0317 & 0.0317\\[1ex]
\(\| \bm{x}^{\text{SCP}} - \bm{x}^{\text{RC}}\|_2\) & 2.92e-5 & && 2.87e-6 & && 1.7e-5\\[1ex]
	
	\rule{0pt}{3.25ex}\(\bm{T=200}\) & \\[0.5ex]
	Runtime (s) 	& 1.28 & 21.8 && 3.47 & 107 && 14.3 & 217\\[1ex]
\(\| \bm{x}^{(i)} - \bm{x}^{\text{Nom}}\|_2\) & 0.0265 & 0.0265 && 0.0393 & 0.0393 && 0.0907 & 0.0907\\[1ex]
\(\| \bm{x}^{\text{SCP}} - \bm{x}^{\text{RC}}\|_2\) & 5.2e-5 & && 2.03e-5 & && 3.99e-5\\[1ex]

	\rule{0pt}{3.25ex}\(\bm{T=400}\) & \\[0.5ex]
	Runtime (s) 	& 3.75 & 19.5 && 6.16 & 236 && 20.3 & 556\\[1ex]
\(\| \bm{x}^{(i)} - \bm{x}^{\text{Nom}}\|_2\) & 0.0252 & 0.0251 && 0.0425 & 0.0424 && 0.0161 & 0.0161\\[1ex]
\(\| \bm{x}^{\text{SCP}} - \bm{x}^{\text{RC}}\|_2\) & 1.71e-4 & && 4.2e-4 & && 4.4e-5\\[1ex]
\hline
\end{tabular}
\label{table:RC}
\end{table}

{\color{rev}Table \ref{table:RC} presents the following elements for evaluation. The first element is the runtime (in seconds) of the two methods. The runtime is the sole performance indicator in this table and allows us to see the performance advantage of the SCP--PGA algorithm over the robust counterpart. The remaining elements in Table \ref{table:RC} are there to verify that the resulting portfolios are, in fact, virtually identical. The element denoted as \(\| \bm{x}^{(i)} - \bm{x}^{\text{Nom}}\|_2\) measures the \(\ell_2\)-norm distance between a DRRP portfolio (either the SCP--PGA or robust counterpart) and the nominal (non-robust) risk parity portfolio. Since the DRRP asset allocations are almost identical, we expect this measure to also be almost identical (i.e., the `distance' between the two DRRP portfolios and the nominal portfolio should be almost the same). Finally, the last element, denoted as \(\| \bm{x}^{\text{SCP}} - \bm{x}^{\text{RC}}\|_2\), measures the \(\ell_2\)-norm distance between the two DRRP portfolios. Once again, since the asset allocations are very similar, this distance should be close to zero.

Inspecting the distance metrics \(\| \bm{x}^{(i)} - \bm{x}^{\text{Nom}}\|_2\) and \(\| \bm{x}^{\text{SCP}} - \bm{x}^{\text{RC}}\|_2\) in Table \ref{table:RC}, we can see that the DRRP portfolios behave as intended, i.e., both methods find the same optimal portfolio. Looking at the runtime results, we can see that the SCP--PGA algorithm outperforms the robust counterpart in every single test. In fact, the runtime results suggest that the SCP--PGA algorithm is at least five times faster than the robust counterpart (see the trial with \(\omega=0.4\), \(n=100\) and \(T=400\)) and up to 100 times faster (see the trial with \(\omega=0.2\), \(n=400\) and \(T=400\)). These results highlight a significant numerical advantage of the SCP--PGA algorithm over the robust counterpart.}

The {\color{rev}second} part of the numerical performance experiment compares the SCP--PGA algorithm to the PGDA algorithm. {\color{rev}The experiment is conducted as follows. We randomly generate synthetic datasets with \(n=\) 50, 200 assets and\(T=\) 1,000, 5,000 scenarios, meaning there are a total of four different datasets. The largest dataset, with \(n=200\) and \(T=5,000\), simulates the conditions to create a portfolio with 200 constituent assets using approximately 20 years worth of daily observations. For each dataset, we find the corresponding DRRP portfolio using both the PGDA algorithm and the SCP--PGA algorithm. We optimize the portfolios using three different degrees of robustness, \(\omega=\) 0.15, 0.3, 0.45, and using the JS, Hellinger and TV distances.}

{\color{rev}The remainder of the user-defined parameters are the same as before.} In addition, we set the following values for the PGDA algorithm: \(\alpha_0=30\), {\color{rev}\(y_i^0 = 10\)} for \(i=1, \dots, n\). {\color{rev}The results are presented in Table \ref{table:algoComp}.}

\begin{table}[!ht]
\footnotesize
\caption{Comparison of numerical performance between the PGDA {\color{rev}from Algorithm \ref{algo:DRORP} and the SCP--PGA (shown as `SCP') from Algorithm \ref{algo:DRExact}}. The maximum number of iterations is limited to 1,000, after which the algorithms terminate.}
\centering
\begin{tabular}{@{\extracolsep{-6.5pt}}l rr l rr l rr l rr l rr l rr}
\hline 
\rule{0pt}{2.75ex} & \multicolumn{8}{c}{\(n=50\)} && \multicolumn{8}{c}{\(n=200\)}\\[0.5ex]\cline{2-9} \cline{11-18} 
\rule{0pt}{2.75ex} & \multicolumn{2}{c}{JS} && \multicolumn{2}{c}{Hellinger} && \multicolumn{2}{c}{TV} && \multicolumn{2}{c}{JS} && \multicolumn{2}{c}{Hellinger} && \multicolumn{2}{c}{TV}\\[0.5ex]\cline{2-3} \cline{5-6} \cline{8-9}\cline{11-12}\cline{14-15}\cline{17-18}
\rule{0pt}{2.75ex}& PGDA & SCP && PGDA & SCP && PGDA & SCP && PGDA & SCP && PGDA & SCP && PGDA & SCP\\[0.5ex]
\hline
	\rule{0pt}{2.75ex}\(\omega=0.15\)  & \\[0.5ex]
\hline
	\rule{0pt}{3.25ex}\(\bm{T=1,000}\) & \\[0.5ex]
	Runtime (s) 		& {\color{rev}2.74} & {\color{rev}0.91} && {\color{rev}2.98} & {\color{rev}1.73} && {\color{rev}317} & {\color{rev}5.21}&& {\color{rev}20.5} & {\color{rev}4.54} && {\color{rev}47.9} & {\color{rev}4.87} && {\color{rev}653} & {\color{rev}11.2}\\[0.25ex]
	Iterations 		& {\color{rev}10} & {\color{rev}7} &\ \ & {\color{rev}14} & {\color{rev}8} &\ \ &  {\color{rev}1,000} & {\color{rev}15} &\ \ & {\color{rev}40} & {\color{rev}8} &\ \ & {\color{rev}81} & {\color{rev}7} &\ \ & {\color{rev}1,000} & {\color{rev}16}\\[0.25ex]
	Var. (\(\times 10^4\)) 	& {\color{rev}6.61} & 6.61 && {\color{rev}7.13} & 7.13 && {\color{rev}9.38} & 9.38 && {\color{rev}6.17} & 6.17 && {\color{rev}6.70} & 6.70 && {\color{rev}9.13} & 9.13\\[0.25ex]

	\rule{0pt}{3.25ex}\(\bm{T=5,000}\) & \\[0.5ex]
	Runtime (s) 		& {\color{rev}12.2} & {\color{rev}9.10} && {\color{rev}22.8} & {\color{rev}18.9} && {\color{rev}51.57} & {\color{rev}26.0} && {\color{rev}191} & {\color{rev}24.7} && {\color{rev}159} & {\color{rev}34.4} && {\color{rev}4,385} & {\color{rev}71.5}\\[0.25ex]
	Iterations 		& {\color{rev}12} & {\color{rev}8} && {\color{rev}11} & {\color{rev}9} && {\color{rev}25} & {\color{rev}11} && {\color{rev}76} & {\color{rev}10} && {\color{rev}49} & {\color{rev}10} && {\color{rev}1,000} & {\color{rev}19}\\[0.25ex]
   	Var. (\(\times 10^4\)) 	& 5.64 & 5.64 && {\color{rev}6.03} & 6.03 && {\color{rev}9.89} & 9.89 && {\color{rev}5.23} & 5.23 && {\color{rev}5.68} & 5.68 && {\color{rev}8.16} & 8.17\\[1ex]

\hline
	\rule{0pt}{2.75ex}\(\omega=0.3\)  & \\[0.5ex]
\hline
	\rule{0pt}{3.25ex}\(\bm{T=1,000}\) & \\[0.5ex]
	Runtime (s) 		& {\color{rev}1.37} & {\color{rev}1.68} && {\color{rev}4.18} & {\color{rev}4.82} && {\color{rev}74.5} & {\color{rev}8.28} && {\color{rev}37.5} & {\color{rev}9.22} && {\color{rev}15.5} & {\color{rev}10.2} && {\color{rev}214} & {\color{rev}15.5}\\[0.25ex]
	Iterations 		& {\color{rev}14} & {\color{rev}16} && {\color{rev}16} & {\color{rev}17} && {\color{rev}259} & {\color{rev}25} && {\color{rev}91} & {\color{rev}17} && {\color{rev}26} & {\color{rev}17} && {\color{rev}360} & {\color{rev}24}\\[0.25ex]
	Var. (\(\times 10^4\)) 	& {\color{rev}10.8} & 10.8 && {\color{rev}11.9} & 11.9 && {\color{rev}14.4} & 14.4 && {\color{rev}10.5} & 10.5 && {\color{rev}11.7} & 11.7 && {\color{rev}14.2} & 14.2\\[0.25ex]

	\rule{0pt}{3.25ex}\(\bm{T=5,000}\) & \\[0.5ex]
	Runtime (s) 		& {\color{rev}12.4} & {\color{rev}12.2} && {\color{rev}25.4} & {\color{rev}22.8} && {\color{rev}45.2} & {\color{rev}33.6} && {\color{rev}259} & {\color{rev}35.3} && {\color{rev}491} & {\color{rev}49.2} && {\color{rev}870} & {\color{rev}101}\\[0.25ex]
	Iterations 		& {\color{rev}14} & {\color{rev}14} && {\color{rev}15} & {\color{rev}14} && {\color{rev}24} & {\color{rev}18} && {\color{rev}108} & {\color{rev}14} && {\color{rev}152} & {\color{rev}15} && {\color{rev}216} & {\color{rev}28}\\[0.25ex]
	Var. (\(\times 10^4\)) 	& {\color{rev}10.8} & 10.8 && 11.6 & 11.6 && {\color{rev}16.2} & 16.2 && {\color{rev}8.91} & 9.06 && {\color{rev}9.95} & 9.98 && {\color{rev}12.9} & 12.9\\[1ex]

\hline
	\rule{0pt}{2.75ex}\(\omega=0.45\)  & \\[0.5ex]
\hline
		\rule{0pt}{3.25ex}\(\bm{T=1,000}\) & \\[0.5ex]
 	Runtime (s) 		& {\color{rev}2.53} & {\color{rev}1.93} && {\color{rev}5.89} & {\color{rev}4.47} && {\color{rev}35.2} & {\color{rev}7.70} && {\color{rev}28.4} & {\color{rev}9.43} && {\color{rev}15.6} & {\color{rev}14.5} && {\color{rev}125} & {\color{rev}15.5}\\[0.25ex]
	Iterations 		& {\color{rev}27} & {\color{rev}17} && {\color{rev}26} & {\color{rev}19} && {\color{rev}119} & {\color{rev}27} && {\color{rev}74} & {\color{rev}22} && {\color{rev}29} & {\color{rev}26} && {\color{rev}217} & {\color{rev}24}\\[0.25ex]
 	Var. (\(\times 10^4\)) 	& 16.1 & 16.1 && {\color{rev}17.8} & 17.8 && {\color{rev}19.3} & 19.3 && {\color{rev}16.1} & 16.1 && {\color{rev}17.8} & 17.9 && {\color{rev}19.1} & 19.1\\[0.25ex]

	\rule{0pt}{3.25ex}\(\bm{T=5,000}\) & \\[0.5ex]
	Runtime (s) 		& {\color{rev}15.5} & {\color{rev}13.2} && {\color{rev}42.6} & {\color{rev}29.2} && {\color{rev}166} & {\color{rev}49.9} && {\color{rev}315} & {\color{rev}69.1} && {\color{rev}277} & {\color{rev}110} && {\color{rev}3,881} & {\color{rev}138}\\[0.25ex]
	Iterations 		& {\color{rev}21} & {\color{rev}16} && {\color{rev}26} & {\color{rev}17} && {\color{rev}93} & {\color{rev}27} && {\color{rev}135} & {\color{rev}29} && {\color{rev}84} & {\color{rev}31} && {\color{rev}1,000} & {\color{rev}38}\\[0.25ex]
 	Var. (\(\times 10^4\)) 	& 17.9 & 17.9 && {\color{rev}19.3} & 19.3 && {\color{rev}22.5} & 22.5 && {\color{rev}14.3} & 14.3 && {\color{rev}15.7} & 15.7 && {\color{rev}17.5} & 17.6\\[1ex]
\hline
\end{tabular}
\label{table:algoComp}
\end{table}

{\color{rev}Table \ref{table:algoComp} presents the following elements for evaluation. The first and most important element is the runtime (in seconds), which indicates the performance advantage of the SCP--PGA algorithm over the PGDA algorithm. The second element is the number of iterations until convergence. Finally, since part of the output of these two algorithms is the optimal adversarial probability distribution \(\bm{p}^*\), we can evaluate whether they achieve the same optimal DRRP portfolio by comparing the corresponding worst-case estimates of the portfolio variance. If the algorithms have the same portfolio variance, then they both converged to the same optimal DRRP portfolio. However, if the portfolio variances differ, a lower variance indicates convergence to a sub-optimal solution. Note that the algorithms were run with a limit of 1,000 iterations, after which the algorithms are forced to terminate.}

The results in Table \ref{table:algoComp} show that the SCP--PGA algorithm {\color{rev}attains} an equal or higher portfolio variance than the PGDA algorithm in every single instance, meaning the PGDA algorithm sometimes converges to a suboptimal solution. {\color{rev}Moreover, the SCP--PGA algorithm has a faster runtime in every single instance except when \(n=50\), \(T=1,000\), \(\omega=0.3\) with the JS and Hellinger distances (in these two instances the runtimes are almost identical). Crucially, the runtime of the SCP--PGA algorithm scales much better as the datasets increase in size, i.e., the runtime of the SCP--PGA algorithm is comparatively faster than the PGDA algorithm for high-dimensional problems.}

{\color{rev}The third part of the numerical performance experiment focuses solely on the SCP--PGA algorithm. The experiment is conducted as follows. We randomly generate large synthetic datasets with \(n=\) 200, 500, 1,000 assets and\(T=\) 1,000, 5,000, 7,500 scenarios, meaning there are a total of nine different datasets. The largest dataset, with \(n=1,000\) and \(T=7,500\), simulates the conditions to create a portfolio with 1,000 constituent assets using approximately 30 years worth of daily observations. For each dataset, we find the corresponding DRRP portfolio using the SCP--PGA algorithm. We optimize the portfolios using three different degrees of robustness, \(\omega=\) 0.15, 0.3, 0.45, and using the JS, Hellinger and TV distances. The remainder of the user-defined parameters are the same as before. The results are presented in Table \ref{table:performance}.}

\begin{table}[!ht]
\footnotesize
\color{rev}
\caption{Numerical performance of the SCP--PGA algorithm}
\centering
\begin{tabular}{@{\extracolsep{-4pt}}l rrrr l rrrr l rrrr}
\hline 
\rule{0pt}{2.5ex}   & \multicolumn{4}{c}{\(n=200\)} && \multicolumn{4}{c}{\(n=500\)} && \multicolumn{4}{c}{\(n=1,000\)}\\[0.25ex]\cline{2-5} \cline{7-10} \cline{12-15}
\rule{0pt}{3ex}& Nom. & JS & H & TV && Nom. & JS & H & TV && Nom. & JS & H & TV\\[0.5ex]
\hline
	\rule{0pt}{2.75ex}\(\omega=0.15\)  & \\[0.5ex]
\hline
	\rule{0pt}{3ex}\(\bm{T=1,000}\) & \\[0.5ex]
 	Runtime (s) 	& - & 4.74 & 5.32 & 11.8 && - & 21.9 & 22.7 & 44.1 && - & 56.7 & 57.6 & 118\\[0.25ex]
 	Iterations 	& - & 9 & 9 & 18 && - & 8 & 8 & 15 && - & 8 & 8 & 16\\[0.25ex]
 	Time/Iter. (s) & - & 0.53 & 0.59 & 0.66 && - & 0.13 & 0.11 & 0.1 && - & 0.56 & 0.57 & 0.61\\[0.25ex]
 	Var. (\(\times 10^4\))	& 3.12 & 5.08 & 5.47 & 7.00 && 3.33 & 5.35 & 5.76 & 7.04 && 3.26 & 5.66 & 6.15 & 8.58\\[0.25ex]

	\rule{0pt}{3ex}\(\bm{T=5,000}\) & \\[0.5ex]
 	Runtime (s) 	& - & 26.4 & 36.9 & 47.7 && - & 101 & 120 & 138 && - & 214 & 239 & 473\\[0.25ex]
 	Iterations 	& - & 9 & 10 & 12 && - & 9 & 10 & 12 && - & 9 & 10 & 19\\[0.25ex]
 	Time/Iter. (s) & - & 2.93 & 3.69 & 3.98 && - & 11.2 & 12.0 & 11.5 && - & 23.7 & 24.0 & 24.9\\[0.25ex]
 	Var. (\(\times 10^4\)) 	& 3.45 & 5.94 & 6.37 & 9.66 && 3.44 & 6.01 & 6.49 & 9.9 && 3.30 & 5.54 & 5.98 & 8.28\\[0.25ex]

	\rule{0pt}{3ex}\(\bm{T=7,500}\) & \\[0.5ex]
 	Runtime (s) 	& - & 57.8 & 61.2 & 103 && - & 133 & 187 & 290 && - & 324 & 410 & 707\\[0.25ex]
 	Iterations 	& - & 13 & 11 & 17 && - & 9 & 11 & 16 && - & 9 & 11 & 17\\[0.25ex]
 	Time/Iter. (s) & - & 4.45 & 5.56 & 6.09 && - & 14.8 & 17.0 & 18.2 && - & 36.0 & 37.3 & 41.6\\[0.25ex]
 	Var. (\(\times 10^4\)) 	& 2.76 & 4.64 & 5.04 & 7.34 && 3.25 & 5.69 & 6.17 & 9.37 && 3.15 & 5.37 & 5.80 & 7.83\\[0.25ex]
\hline
	\rule{0pt}{2.75ex}\(\omega=0.3\)  & \\[0.5ex]
\hline
	\rule{0pt}{3ex}\(\bm{T=1,000}\) & \\[0.5ex]
 	Runtime (s) 	& - & 8.60 & 10.2 & 12.9 && - & 41.3 & 44.4 & 74.7 && - & 103 & 100 & 146\\[0.25ex]
 	Iterations 	& - & 18 & 17 & 20 && - & 18 & 18 & 29 && - & 15 & 14 & 20\\[0.25ex]
 	Time/Iter. (s) & - & 0.48 & 0.6 & 0.65 && - & 2.3 & 2.47 & 2.58 && - & 6.83 & 7.11 & 7.32\\[0.25ex]
 	Var. (\(\times 10^4\)) 	& 3.12 & 8.03 & 8.89 & 10.17 && 3.33 & 8.31 & 9.22 & 10.44 && 3.26 & 9.77 & 10.82 & 13.49\\[0.25ex]

	\rule{0pt}{3ex}\(\bm{T=5,000}\) & \\[0.5ex]
 	Runtime (s) 	& - & 48.3 & 48.6 & 78.2 && - & 167 & 202 & 186 && - & 505 & 556 & 675\\[0.25ex]
 	Iterations 	& -  & 17 & 13 & 20 && - & 16 & 18 & 17 && -  & 21 & 22 & 27\\[0.25ex]
 	Time/Iter. (s) & - & 2.84 & 3.74 & 3.91 && - & 10.4 & 11.2 & 11.0 && - & 24.1 & 25.3 & 25.0\\[0.25ex]
 	Var. (\(\times 10^4\)) 	& 3.45 & 10.6 & 11.6 & 15.4 && 3.44 & 10.9 & 11.9 & 15.8 && 3.30 & 9.33 & 10.3 & 12.9\\[0.25ex]

	\rule{0pt}{3ex}\(\bm{T=7,500}\) & \\[0.5ex]
 	Runtime (s) 	& - & 73.1 & 122 & 176.4 && - & 289 & 353 & 458 && - & 413 & 749 & 1,146\\[0.25ex]
 	Iterations 	& - & 17 & 21 & 28 && - & 18 & 20 & 26 && - & 12 & 20 & 29\\[0.25ex]
 	Time/Iter. (s) & - & 4.30 & 5.81 & 6.3 && - & 16.1 & 17.7 & 17.6 && - & 34.4 & 37.5 & 39.5\\[0.25ex]
 	Var. (\(\times 10^4\)) 	& 2.76 & 8.10 & 8.86 & 11.7 && 3.25 & 10.4 & 11.4 & 15.3 && 3.15 & 8.67 & 9.65 & 12.2\\[0.25ex]
 	 	
 \hline
	\rule{0pt}{2.75ex}\(\omega=0.45\)  & \\[0.5ex]
\hline
	\rule{0pt}{3ex}\(\bm{T=1,000}\) & \\[0.5ex]
 	Runtime (s) 	& - & 13.2 & 15.4 & 14.8 && - & 58.6 & 80.5 & 85.5 && - & 102 & 112 & 105\\[0.25ex]
 	Iterations 	& - & 24 & 24 & 21 && - & 26 & 32 & 34 && - & 15 & 16 & 15\\[0.25ex]
 	Time/Iter. (s) & - & 0.55 & 0.64 & 0.70 && - & 2.25 & 2.52 & 2.51 && - & 6.82 & 6.99 & 6.99\\[0.25ex]
 	Var. (\(\times 10^4\)) & 3.12 & 11.6 & 12.9 & 13.5 && 3.33 & 11.9 & 13.3 & 13.7 && 3.26 & 15.0 & 16.6 & 18.1\\[0.25ex]
 	
	\rule{0pt}{3ex}\(\bm{T=5,000}\) & \\[0.5ex]
 	Runtime (s) 	& - & 73.6 & 74.0 & 112 && - & 211 & 228 & 362 && - & 403 & 547 & 1,064\\[0.25ex]
 	Iterations 	& - & 25 & 19 & 28 && - & 19 & 18 & 30 && - & 17 & 23 & 40\\[0.25ex]
 	Time/Iter. (s) & - & 2.94 & 3.90 & 3.99 && - & 11.1 & 12.7 & 12.1 && - & 23.7 & 23.8 & 26.6\\[0.25ex]
 	Var. (\(\times 10^4\)) 	& 3.45 & 16.9 & 18.5 & 21.0 && 3.44 & 17.4 & 19.1 & 21.5 && 3.30 & 14.3 & 15.8 & 17.4\\[0.25ex]

	\rule{0pt}{3ex}\(\bm{T=7,500}\) & \\[0.5ex]
 	Runtime (s) 	& - & 99.0 & 180 & 196 && - & 377 & 487 & 798 && - & 509 & 841 & 1,817\\[0.25ex]
 	Iterations 	& - & 23 & 30 & 31 && - & 24 & 27 & 42 && - & 15 & 22 & 44\\[0.25ex]
 	Time/Iter. (s) & - & 4.30 & 5.99 & 6.32 && - & 15.7 & 18.0 & 19.0 && - & 33.9 & 38.2 & 41.3\\[0.25ex]
 	Var. (\(\times 10^4\)) & 2.76 & 12.8 & 14.0 & 15.9 && 3.25 & 16.9 & 18.6 & 21.2 && 3.15 & 13.5 & 14.9 & 16.5\\[0.25ex]	
\hline
\end{tabular}
\label{table:performance}
\end{table}

{\color{rev}Table \ref{table:performance} summarizes the numerical performance of the SCP--PGA algorithm.} As before, the SCP--PGA algorithm is evaluated based on its runtime, the number of iterations until convergence, and the {\color{rev}resulting} portfolio variance. We also include the runtime per iteration. Finally, the results also show the variance of the nominal risk parity portfolio for the same dataset. The nominal portfolio variance serves as a benchmark. When looking at the DRRP portfolio variance, we must keep in mind that this corresponds to the worst-case estimate of the variance as defined by the ambiguity set \(\mathcal{U}_{\bm{p}}\). Therefore, we expect the DRRP portfolio variance to be larger than the nominal.

The results in Table \ref{table:performance} indicate that, overall, the SCP--PGA algorithm converges within reasonable time, even for the largest dataset tested. We note that the largest dataset, with {\color{rev}\(n=1,000\) and \(T=7,500\)}, exaggerates the number of scenarios \(T\) that we would normally consider for parameter estimation in a conventional environment.\footnote{This may exclude high frequency trading environments.} Most financial data service providers tend to use anywhere from 10 days to five years when calculating risk metrics such as the portfolio variance or the CAPM `beta' \cite{sharpe1964capital}, and rely on daily, weekly or monthly scenarios for these calculations. 

If we inspect the portfolio variance, we can see that the JS divergence is more restrictive than either the Hellinger distance or the TV distance. In other words, for the same {\color{rev}degree of robustness}, the JS divergence provides a smaller feasible region in our variance maximization step, leading to portfolios with lower variances. Conversely, the TV distance is the most permissive, consistently having the highest variance for all trials. This suggests that, although the three distance measures have been scaled proportionally, their intrinsic differences suggest some are fundamentally more permissive than others from a portfolio variance perspective.

As shown by the different runtimes in Table \ref{table:performance}, the SCP--PGA algorithm converged the fastest when we used the JS divergence to construct the ambiguity set ({\color{rev}with one exception where the Hellinger distance was marginally faster}). The runtime per iteration is relatively similar for all three distance measures. Thus, this suggests that having a faster convergence rate is mostly dependent on the number of iterations required until convergence. 

{\color{rev}Finally, Figure \ref{fig:convergence} presents the convergence plots corresponding to the test where \(\omega=0.3\) and \(T=5,000\). The plots indicate that the first few iterations are responsible for the majority of the improvement in our objective value. Thus, we may be able to improve the runtime through early stopping of the SCP--PGA algorithm. However, the analysis of an early stopping criterion is outside the scope of this manuscript. Some of these plots also shows the non-monotonic behaviour of the Barzilai--Borwein step size with GLL line search (e.g., see the second plot where the distance is JS and \(n=500\)).}

\begin{figure}[!htbp]
\centering
    \includegraphics[width=0.95\textwidth]{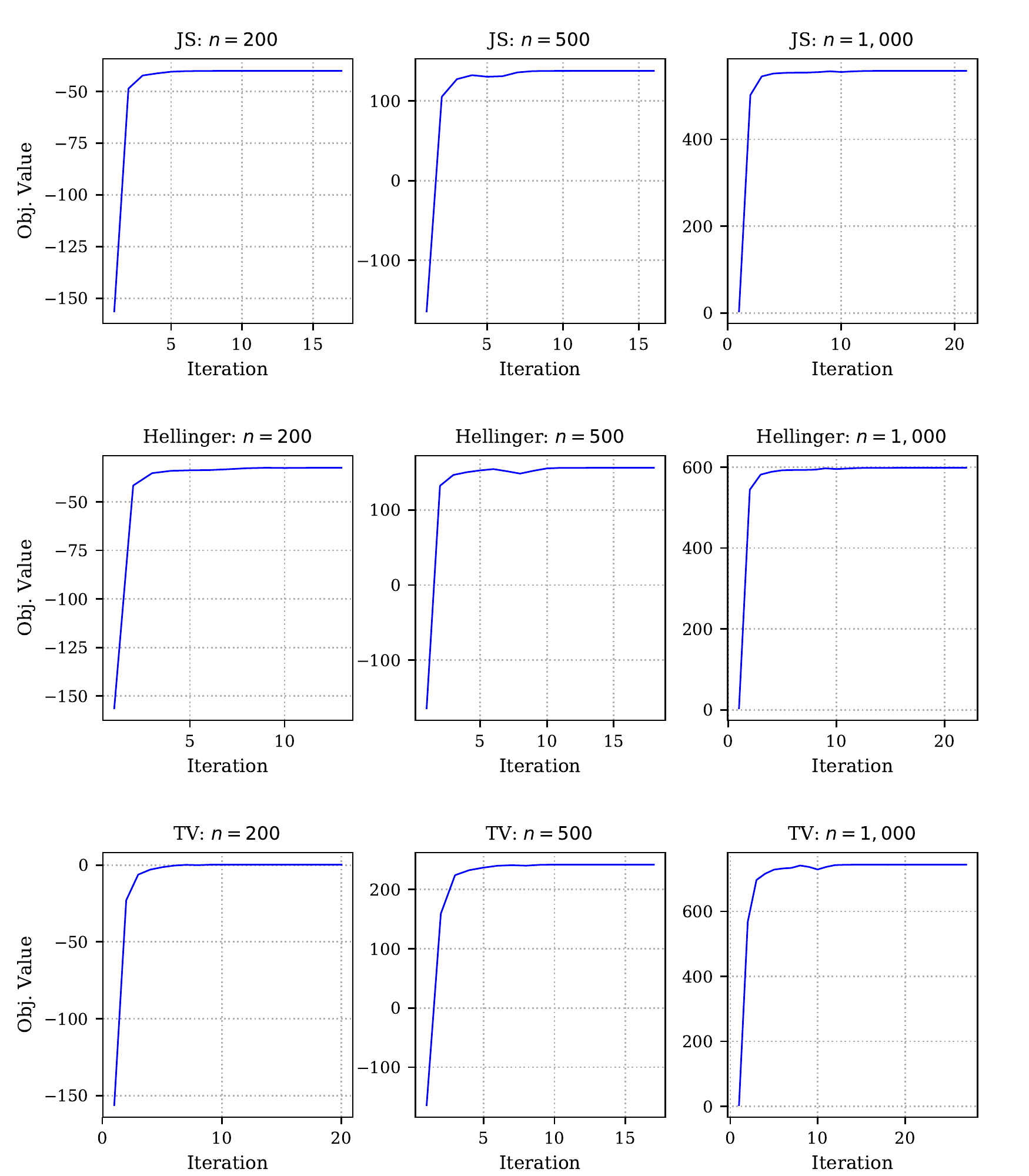}
    \caption{\color{rev} Convergence plots of the SCP--PGA algorithm (Algorithm \ref{algo:DRExact}) for DRRP portfolios with \(\omega=0.3\), \(T=5,000\), and varying values of \(n\).}
\label{fig:convergence}
\end{figure}

\subsection{In-sample experiment}\label{sec:InExp}

To better understand how the distributionally robust framework operates on our dataset, we present a set of in-sample trials over multiple {\color{rev}degrees of robustness}. The DRRP portfolio is, by design, a risk parity portfolio under the worst-case estimate of the risk measure (i.e., the portfolio variance). This means that the portfolio risk is perfectly diversified among the constituent assets with respect to a given estimate of the covariance matrix. However, {\color{rev}as shown in Corollary \ref{coll:RP},} it is paramount to understand that only one risk parity portfolio exists for a specific {\color{rev}estimate of the covariance matrix, provided this estimate is linearly independent of other estimates. Therefore, we should not expect the robust portfolios to satisfy the risk parity condition under the nominal estimate of the covariance matrix, and vice versa}. 

With that said, our goal is to evaluate how the asset allocations and risk contributions differ between the robust portfolios and the nominal portfolio. We use the 30 industry portfolios listed in Table \ref{table:assets} as our assets (\(n=30\)), and we use two years of weekly returns to estimate the covariance matrix, meaning we have 104 historical scenarios (\(T=104\)). Specifically, the data corresponds to the time period from 01--Jan--2008 to 31--Dec--2009.

We begin by inspecting the asset weights and risk contributions for robust portfolios built with a {\color{rev}degree of robustness} \(\omega=0.3\). The asset weights are shown in Figure \ref{fig:InSampleWeights}, and show that the DRRP portfolios exhibit a similar behaviour under all three statistical distance measures. Not only do the DRRP portfolios differ from the nominal portfolio, but the asset weights of all three DRRP portfolios also follow a similar pattern (i.e., the peaks and troughs in Figure \ref{fig:InSampleWeights} are similar for all four portfolios). We also note that the wealth allocation of the DRRP portfolios is less pronounced than that of the nominal portfolio, with the DRRP portfolios exhibiting a more even distribution of wealth.

\begin{figure}[!htbp]
\centering
    \includegraphics[width=0.95\textwidth]{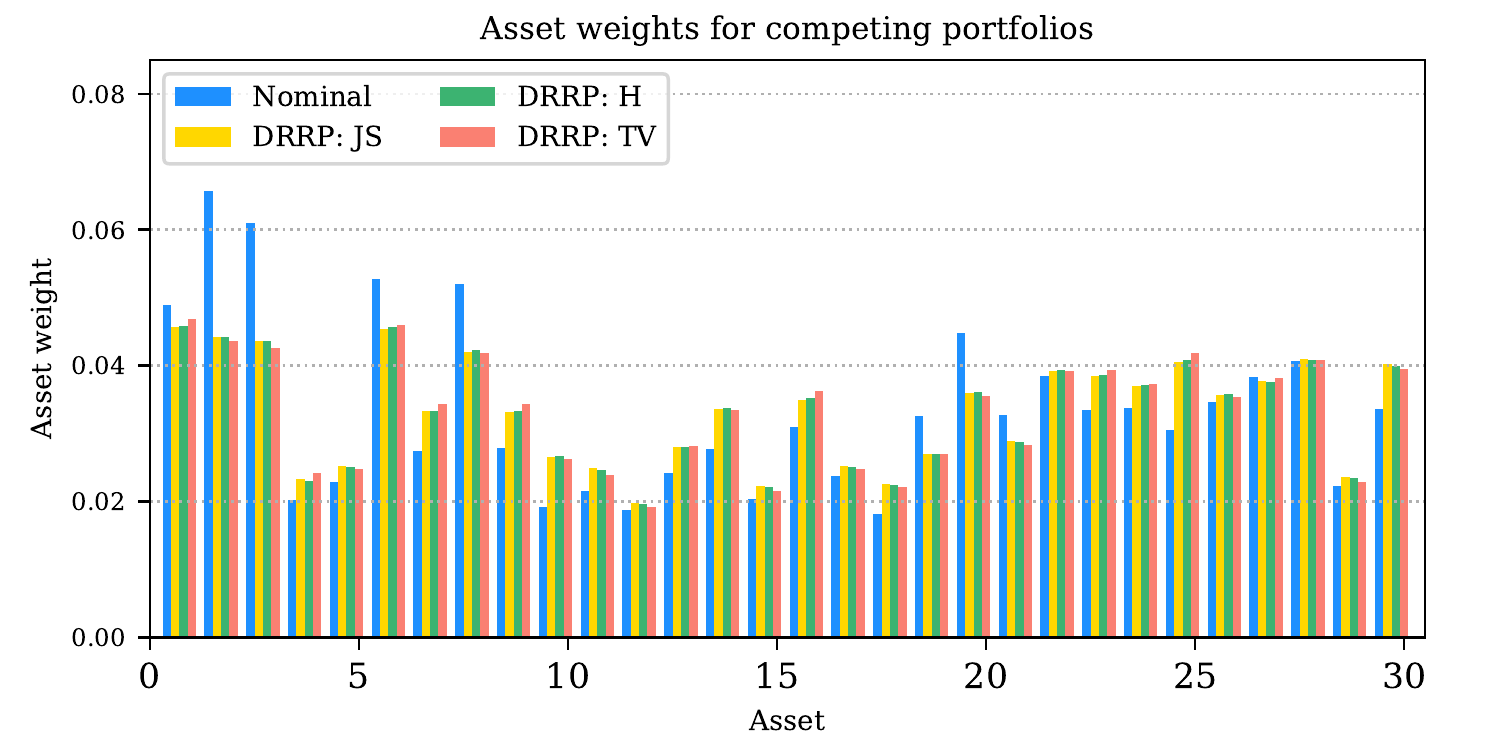}
    \caption{Asset weights of the nominal and DRRP portfolios with \(\omega=0.3\).}
\label{fig:InSampleWeights}
\end{figure}

Figure \ref{fig:InSampleRisk} presents a similar analysis, except we compare the risk contribution per asset with respect to some estimate of the covariance matrix. For convenience, we restate that the risk contribution per asset is defined as \(R_i \triangleq x_i [\hat{\bm{\Sigma}} \bm{x}]_i\) for some estimate \(\hat{\bm{\Sigma}}\). For a fair comparison, the top plot in Figure \ref{fig:InSampleRisk} shows the risk contribution per asset for all portfolios relative to the nominal estimate of the covariance matrix, \(\bm{\Sigma}^{\text{nom}} \triangleq \bm{\Sigma}(\bm{q})\). The remaining three plots compares the risk contributions of the nominal portfolio with respect to \(\bm{\Sigma}^{\text{JS}}\), \(\bm{\Sigma}^{\text{H}}\) and \(\bm{\Sigma}^{\text{TV}}\), respectively. These three matrices correspond to the estimated covariance matrix obtained after the convergence of Algorithm \ref{algo:DRExact} for the respective distance measure. 

The top plot in Figure \ref{fig:InSampleRisk} confirms the similarity between all three distributionally robust portfolios. For all risk contributions per asset, the three DRRP portfolios together are either lower or higher than the nominal portfolio (i.e., there is no asset where its risk contribution from a DRRP portfolio is higher than from the nominal portfolio while simultaneously lower from another DRRP portfolio). The DRRP portfolios choose the same assets to over- or under-contribute risk relative to the nominal portfolio, highlighting the structural similarity between the DRRP portfolios.

The three remaining plots in Figure \ref{fig:InSampleRisk} serve to show that all robust portfolios are true risk parity portfolios with respect to their corresponding estimate of the covariance matrix. As shown in the plots, the bars for the robust portfolios are of equal height. 

\begin{figure}[!htbp]
\centering
    \includegraphics[width=0.95\textwidth]{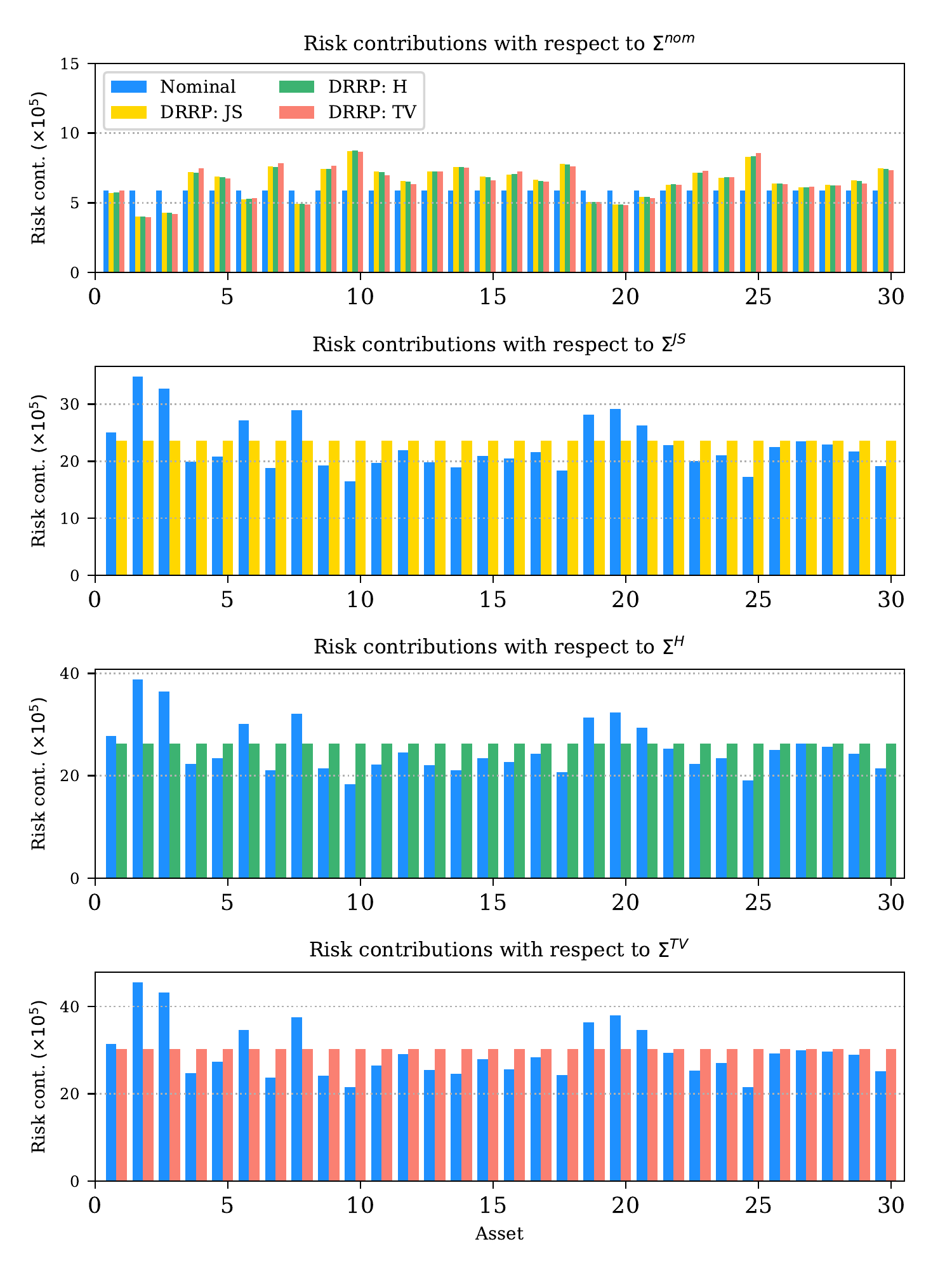}
    \caption{Risk contributions per asset of the nominal and DRRP portfolios with \(\omega=0.3\). The top plot shows the risk contributions with respect to the nominal estimate of the covariance matrix. The remaining plots show the risk contributions of the nominal and DRRP portfolios based on the robust estimates of the covariance matrix.}
\label{fig:InSampleRisk}
\end{figure}

The last component of the in-sample experiment replicates the same procedure, except we use varying {\color{rev}degrees of robustness} \(\omega\). For brevity, these results are summarized in Table \ref{table:inSample}. The table shows the total variance of the four competing portfolios with respect to the nominal estimate of the covariance matrix, as well as a pairwise comparison of the total variance of the robust portfolios against the nominal using the corresponding worst-case estimates of the covariance matrix. In addition, we report the level of risk concentration through the coefficient of variation (CV) of the risk contributions. The CV is calculated by taking the standard deviation of the risk contributions and dividing them by their average, i.e., 
\begin{equation}
    \text{CV} = \frac{\text{SD}\big(\bm{x} \circ [\hat{\bm{\Sigma}} \bm{x}]\big)}{\frac{1}{n} \bm{x}^\top \bm{\Sigma} \bm{x}},
\label{eq:CV}
\end{equation}
where `\(\circ\)' is the element-wise multiplication operator and \(\text{SD}(\cdot)\) computes the standard deviation of the corresponding vector. In theory, an optimal risk parity portfolio should have a CV of zero.

\begin{table}[!ht]
\footnotesize
\caption{Portfolio variance and CV based on the nominal and worst-case estimates of the asset covariance matrix}
\centering
\begin{tabular}{@{\extracolsep{-1pt}}l rrrr l rr l rr l rr}
\hline 
\rule{0pt}{3ex}   & \multicolumn{4}{c}{\(\bm{\Sigma}^{\text{nom}}\)} && \multicolumn{2}{c}{\(\bm{\Sigma}^{\text{JS}}\)} && \multicolumn{2}{c}{\(\bm{\Sigma}^{\text{H}}\)} && \multicolumn{2}{c}{\(\bm{\Sigma}^{\text{TV}}\)}\\[0.8ex]\cline{2-5} \cline{7-8} \cline{10-11} \cline{13-14}
\rule{0pt}{3ex} & \(\bm{x}^{\text{nom}}\) & \(\bm{x}^{\text{JS}}\) & \(\bm{x}^{\text{H}}\) & \(\bm{x}^{\text{TV}}\) && \(\bm{x}^{\text{nom}}\) & \(\bm{x}^{\text{JS}}\) && \(\bm{x}^{\text{nom}}\) & \(\bm{x}^{\text{H}}\) && \(\bm{x}^{\text{nom}}\) & \(\bm{x}^{\text{TV}}\) \\[1.5ex]
\hline 
\rule{0pt}{3ex} \(\bm{\omega=0.1}\)	& \\
	\rule{0pt}{3ex}Var. (\(\times 10^3\)) & 1.77 & 1.87 & 1.87 & 1.99 && 2.91 & 3.03 && 3.13 & 3.25 && 4.35 & 4.54\\[1.75ex]
	CV & 7e-16 & 0.10 & 0.10 & 0.19 && 0.10 & 6e-16 && 0.11 & 2e-16 && 0.22 & 2e-16\\[1.75ex]

\rule{0pt}{3ex} \(\bm{\omega=0.2}\)	& \\
	\rule{0pt}{3ex}Var. (\(\times 10^3\)) & 1.77 & 1.93 & 1.94 & 1.97 && 4.63 & 4.83 && 5.12 & 5.34 && 6.59 & 6.83\\[1.75ex]
	CV & 7e-16 & 0.15 & 0.15 & 0.19 && 0.17 & 3e-16 && 0.17 & 3e-16 && 0.21 & 4e-16\\[1.75ex]

\rule{0pt}{3ex} \(\bm{\omega=0.3}\)	& \\
	\rule{0pt}{3ex}Var. (\(\times 10^3\)) & 1.77 & 1.96 & 1.96 & 1.95 && 6.80 & 7.06 && 7.59 & 7.87 && 8.81 & 9.09\\[1.75ex]
	CV & 7e-16 & 0.18 & 0.18 & 0.188 && 0.20 & 6e-16 && 0.20 & 4e-16 && 0.20 & 3e-16\\[1.75ex]

\rule{0pt}{3ex} \(\bm{\omega=0.4}\)	& \\
	\rule{0pt}{3ex}Var. (\(\times 10^3\)) & 1.77 & 1.96 & 1.95 & 1.95 && 9.27 & 9.57 && 10.4 & 10.7 && 11.0 & 11.32\\[1.75ex]
	CV & 7e-16 & 0.18 & 0.18 & 0.18 && 0.20 & 3e-16 && 0.20 & 3e-16 && 0.20 & 2e-16\\[1.75ex]

\rule{0pt}{3ex} \(\bm{\omega=0.5}\)	& \\
	\rule{0pt}{3ex}Var. (\(\times 10^3\)) & 1.77 & 1.95 & 1.95 & 1.94 && 11.9 & 12.3 && 13.3 & 13.6 && 13.2 & 13.5\\[1.75ex]
	CV & 7e-16 & 0.19 & 0.19 & 0.18 && 0.20 & 3e-16 && 0.20 & 3e-16 && 0.20 & 2e-16\\[1.75ex]

\rule{0pt}{3ex} \(\bm{\omega=0.6}\)	& \\
	\rule{0pt}{3ex}Var. (\(\times 10^3\)) & 1.77 & 1.94 & 1.94 & 1.94 && 14.6 & 15.0 && 16.1 & 16.5 && 15.3 & 15.67\\[1.75ex]
	CV & 7e-16 & 0.19 & 0.19 & 0.19 && 0.20 & 3e-16 && 0.20 & 6e-16 && 0.20 & 5e-16\\[1.75ex]

\hline
\end{tabular}
\label{table:inSample}
\end{table}

The results in Table \ref{table:inSample} show that all portfolios have perfect risk diversification with respect to their corresponding estimates of the covariance matrix (i.e., the CV of the portfolios is approximately zero with respect to their corresponding instance of \(\hat{\bm{\Sigma}}\)). An interesting observation from Table \ref{table:inSample} is that the nominal portfolio has the lowest total variance when compared against the robust portfolios for all instances of the covariance matrix. We note that this observation does not fundamentally conflict with our objective, as our robust portfolios aim to diversify risk, and not minimize it. Nevertheless, the results suggest that these robust portfolios incur more ex ante risk when compared to the nominal portfolio.

\subsection{Out-of-sample experiment}\label{sec:OutExp}

An overview of the out-of-sample experimental setup follows. Our portfolio constituents are the 30 assets listed in Table \ref{table:assets} (\(n=30\)). The dataset consists of weekly historical returns from 01--Jan--1998 to 31--Dec--2016, with the data obtained from \cite{French2020Data}. This is a rolling window experiment, where we use two years of weekly scenarios to calibrate our portfolios (\(T=104\)) and we hold these portfolios for six month before rebalancing them. All estimated parameters and weights are recalibrated every time we rebalance our portfolios. To exemplify our approach, consider the first investment period. We use the data from 01--Jan-1998 to 31--Dec-1999 to calibrate our initial portfolios, and then we hold and observe the out-of-sample performance from 01--Jan--2000 to 30--Jun--2000. Afterwards, we roll the calibration window forward and recalibrate and rebalance our portfolios using the preceding two-year period (01--Jul--1998 to 30--Jun--2000). We then observe the out-of-sample performance from 01--Jul--2000 to 31--Dec--2000. We repeat these steps until the end of the investment horizon. Our out-of-sample experiment runs from 01--Jan--2000 until 31--Dec--2016, meaning we have a total of 34 six-month out-of-sample investment periods. We record the wealth evolution of the portfolios over the entire horizon. Finally, we note that this experiment is non-exhaustive since the portfolio performance is highly dependent on our choice of assets and historical time period. However, having a diverse basket of assets representative of major U.S. industries and a 17-year out-of-sample investment period should suffice for our analysis. 

The first set of results, shown in Table \ref{table:outSample} and Figure \ref{fig:outSample}, correspond to risk parity portfolios with {\color{rev}degree of robustness} \(\omega = 0.15, 0.3, 0.45\). The top plot in Figure \ref{fig:outSample} shows the total wealth evolution of the nominal portfolio. The remaining three plots show the relative wealth of the robust portfolios. The `relative wealth' is defined as a percentage, \((W^t_i/W^t_{\text{nom}}-1)\times 100\), where \(W^t_i\) is the wealth of portfolio \(i\) at each weekly time step \(t\), while \(W^t_{\text{nom}}\) is the nominal portfolio's wealth. 

The robust portfolios exhibit a drop in their relative wealth over the bear market periods of 2000--2003 and 2008--2009. However, we note that the risk parity portfolios are not designed to minimize a portfolio's risk, but rather to be fully risk diverse. In turn, the results suggest that the robust portfolios are in a better position to take advantage of the subsequent bull market periods, where we can see sustained growth relative to the nominal. Moreover, the ex post portfolio performance aligns with our findings from the in-sample experiment, where we saw that the robust portfolios had a somewhat higher risk appetite given that they had a higher ex ante variance when compared to the nominal portfolio. 

\begin{figure}[!htbp]
\centering
    \includegraphics[width=0.95\textwidth]{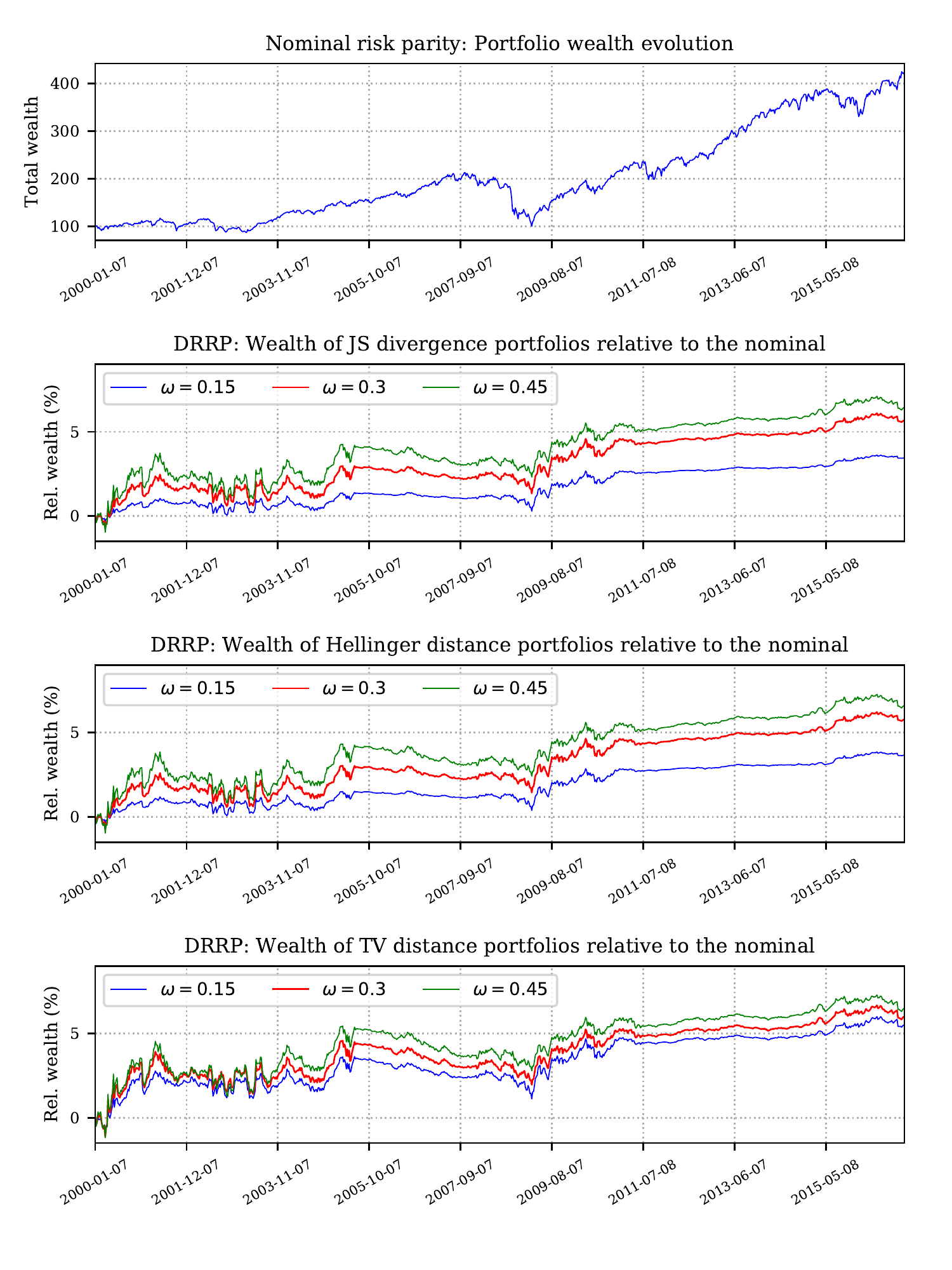}
    \caption{Wealth evolution for DRRP portfolios. The top plot shows the total wealth evolution of the nominal portfolio. The remaining three plots present the relative wealth evolution of the DRRP portfolios with respect to the nominal portfolio for varying {\color{rev}degrees of robustness}.}
\label{fig:outSample}
\end{figure}

We summarize the ex post performance in Table \ref{table:outSample}, where we show the annualized average excess return, annualized volatility,\footnote{The portfolio volatility is the square root of the ex post portfolio variance.} Sharpe ratio \cite{sharpe1994sharpe} and average turnover rate over the entire investment horizon (2000--2016). We also provide the subset of results corresponding to a bear market period and its subsequent recovery (2007--2011). The results consistently show that the robust portfolios are able to attain a higher average excess return while maintaining a similar level of volatility, leading to higher Sharpe ratios. However, as the Sharpe ratio increases so does the average turnover rate, which serves as a proxy of transaction costs. With that said, transaction costs are becoming increasingly negligible in modern financial markets. Moreover, we note that the turnover rates of risk parity portfolios are typically very low when compared against other asset allocation strategies such as MVO (e.g., see \cite{costa2020regime}), and our results in {\color{rev}Table} \ref{table:outSample} are no exception. Thus, the increased transaction costs incurred by the DRRP portfolios are somewhat negligible. Finally, we note that our observations are consistent over the 2007--2011, with the robust portfolios having a higher Sharpe ratio over this time period when compared to the nominal. 

\begin{table}[!ht]
\footnotesize
\caption{Summary of financial performance of the risk parity portfolios over the periods 2000--2016 and 2007--2011}
\centering
\begin{tabular}{@{\extracolsep{-1pt}}l rrrr l rrr l rrr}
\hline 
\rule{0pt}{2.75ex}   & Nom.\ \ & \multicolumn{3}{c}{JS} && \multicolumn{3}{c}{Hellinger} && \multicolumn{3}{c}{TV}\\[0.8ex]\cline{3-5} \cline{7-9} \cline{11-13}
\rule{0pt}{2.75ex} \qquad\qquad\(\omega=\) &  & 0.15 & 0.3 & 0.45 && 0.15 & 0.3 & 0.45 && 0.15 & 0.3 & 0.45\\[1ex]
\hline 
\rule{0pt}{3ex}\textbf{2000 -- 2016}	& \\
\rule{0pt}{3ex}Return (\%)	& 6.64  & 6.84  & 6.96  & 7.01  && 6.85 & 6.97 & 7.02 && 6.95  & 6.98  & 7.01\\[1.5ex]
{\color{rev}Volatility} (\%)					& 17.0  & 17.2  & 17.2  & 17.2  && 17.2  & 17.2  & 17.2  && 17.2  & 17.2  & 17.2\\[1.5ex]
Sharpe Ratio 				& 0.390 & 0.398 & 0.405 & 0.407 && 0.399 & 0.405 & 0.408 && 0.404 & 0.406 & 0.407\\[1.5ex]
Turnover 					& 0.100 & 0.126 & 0.151 & 0.167 && 0.128 & 0.152 & 0.168 && 0.160 & 0.172 & 0.180\\[1.5ex]
\rule{0pt}{3ex}\textbf{2007 -- 2011}	& \\
\rule{0pt}{3ex}Return (\%)	& 2.39  & 2.67  & 2.77  & 2.73  && 2.69  & 2.77  & 2.73  && 2.71  & 2.66  & 2.62\\[1.5ex]
{\color{rev}Volatility} (\%)					& 23.3  & 23.7  & 23.8  & 23.8  && 23.7  & 23.8  & 23.8  && 23.9  & 23.8  & 23.8\\[1.5ex]
Sharpe Ratio 				& 0.102 & 0.113 & 0.116 & 0.115 && 0.113 & 0.116 & 0.115 && 0.114 & 0.112 & 0.110\\[1.5ex]
Turnover 					& 0.098 & 0.123 & 0.150 & 0.166 && 0.125 & 0.151 & 0.166 && 0.162 & 0.172 & 0.177\\[1ex]
\hline
\end{tabular}
\label{table:outSample}
\end{table}

\section{Conclusion} \label{sec:conclusion}

This paper introduced a DRO problem specifically designed for risk parity portfolios. Distributional robustness is introduced through a discrete probability distribution that allows us to break away from the assumption that all scenarios in a data-driven parameter estimation process are equally likely. Instead, we can model the probability attached to each  {\color{rev}scenario} as a decision variable, which in turn allows us to formulate a minimax problem that seeks risk parity while simultaneously seeking the most adversarial instance of the discrete distribution such that the portfolio variance is maximized. Our modelling framework allows us to define the probability ambiguity set using any convex function {\color{rev}as a  measure of statistical distance between the adversarial probability distribution and the `equally likely' nominal assumption}. We exemplify this by implementing three alternative statistical distances: JS, Hellinger, and TV.

The DRRP problem is a constrained convex--concave minimax problem over convex sets. {\color{rev}Using the dual problem of the maximization step, the problem can be recast as a straightforward convex robust counterpart. However, in practice, the complexity of the robust counterpart means it is difficult to solve numerically. Instead, we} apply projected gradient methods to iterate over the DRRP minimax problem in both descent and ascent directions. The projections ensure that we retain feasibility after each iteration. However, iteratively moving in both directions may slow down convergence. 

{\color{rev}Therefore}, we propose a novel algorithmic framework to solve the DRRP problem. The proposed SCP--PGA algorithm exploits the strict convexity of the risk parity problem, which guarantees that we have a unique risk parity portfolio for each instance of the adversarial probability distribution. Thus, we aim to iteratively ascend in the probability space through PGA while solving the corresponding convex risk parity problem after every iteration. The SCP--PGA algorithm dramatically improves computational runtime and, by design, retains the global convergence properties of general projected gradient methods. Our numerical results show that the SCP--PGA algorithm is computationally tractable and scalable. From a financial perspective, our experiments show that a DRRP portfolio is able to attain a higher risk-adjusted return when compared to the nominal portfolio. 

Finally, we note that the DRRP problem can be adapted to solve other portfolio selection problems that may benefit from distributional robustness. Moreover, the general design of the SCP--PGA algorithm should allow it to solve other types of constrained convex--concave minimax problems, including problems in other disciplines. These topics are the subject of future research. 

%


\bibliographystyle{apa}
\bibliography{main.bib}

\begin{appendices}

\section{Numerical implementation of statistical distances} \label{app:distance}

Here we describe how to numerically implement the squared Hellinger distance in \eqref{eq:delH} and the TV distance in \eqref{eq:delTV}. We use either of these two distance measures to define the ambiguity set \(\mathcal{U}_{\bm{p}}\), and then use the set to construct the corresponding Euclidean projection optimization problem in \eqref{eq:maxRPproj}. However, in their current form, most optimization solvers will reject them.

If we wish to use the squared Hellinger distance in \eqref{eq:delH}, then the projection optimization problem can be implemented as follows.
\[
\begin{aligned}
&\begin{aligned}
    & \min_{\bm{p},\bm{r}} & \| \bm{u} - \bm{p}\|_2^2\end{aligned}\\
&\begin{aligned}
&\ \text{s.t.} &\bm{1}^T \bm{p} 	&= 1,\\
		& & \frac{1}{2}\sum_{t=1}^T p_t - 2 r_t \sqrt{q_t} + q_t	&\leq d_{\text{H}}, \\
		& & p_t &\geq r_t^2\quad \text{for } t = 1,\dots,T, \\
		& &\bm{p},\ \bm{r}					&\geq 0,
\end{aligned}
\end{aligned}
\]
where \(\bm{u}\in\mathbb{R}^T\) is some arbitrary vector that we wish to project onto the set \(\mathcal{U}_{\bm{p}}\), while \(\bm{r}\in\mathbb{R}^T\) is an auxiliary variable that serves as a placeholder for the square root of each element of \(\bm{p}\). As before, \(\bm{q}\in\mathcal{P}\) is the nominal probability distribution, while \(d_{\text{H}}\) is the maximum permissible distance in \eqref{eq:limitH} and is defined by the number of scenarios \(T\) and the investor's {\color{rev}desired degree of robustness} \(\omega\).

On the other hand, if we wish to use the TV distance in \eqref{eq:delTV}, then the projection optimization problem can be implemented as follows.
\[
\begin{aligned}
&\begin{aligned}
    & \min_{\bm{p},\bm{\zeta}} & \| \bm{u} - \bm{p}\|_2^2\end{aligned}\\
&\begin{aligned}
    &\ \text{s.t.} &\bm{1}^T \bm{p} 	&= 1,\\
    & & \frac{1}{2}\sum_{t=1}^T \zeta_t	&\leq d_{\text{TV}}, \\
		& & \zeta_t &\geq p_t - q_t\quad \text{for } t = 1,\dots,T, \\
		& & \zeta_t &\geq q_t - p_t\quad \text{for } t = 1,\dots,T, \\
		& &\bm{p}					&\geq 0,
\end{aligned}
\end{aligned}
\]
where \(\bm{\zeta}\in\mathbb{R}^T\) is an auxiliary variable that represents the absolute value of the difference between the elements of \(\bm{p}\) and \(\bm{q}\). The maximum permissible distance \(d_{\text{TV}}\) is defined by the number of scenarios \(T\) and the investor's {\color{rev}desired degree of robustness} \(\omega\) as shown in \eqref{eq:limitTV}.

\end{appendices}

\end{document}